	\setlist[enumerate,1]{label=(\alph*), font=\normalfont}
    \setlist[enumerate,2]{label=(\roman*), font=\normalfont,, ref=\theenumi(\roman*)}
\numberwithin{equation}{section}
\theoremstyle{plain}
\newtheorem{theorem}{Theorem}[section]
\newtheorem{lemma}[theorem]{Lemma}
\newtheorem{proposition}[theorem]{Proposition}
\newtheorem{corollary}[theorem]{Corollary}
\theoremstyle{definition}
\newtheorem{remark}[theorem]{Remark}
\newtheorem*{convention}{Convention}
\newcommand{\R}{\mathbb{R}}
\newcommand{\Z}{\mathbb{Z}}
\newcommand{\Q}{\mathbb{Q}}
\newcommand{\vct}[1]{\bm{#1}} 
\newcommand{\e}{\vct{e}}
\newcommand{\f}{\vct{f}}
\newcommand{\vu}{\vct{u}}
\newcommand{\vv}{\vct{v}}
\newcommand{\ip}{\mathfrak{p}}
\newcommand{\ia}{\mathfrak{a}}
\newcommand{\cP}{\mathcal{P}}
\newcommand{\ve}{\varepsilon}
\newcommand{\abs}[1]{\left|#1\right|}
\newcommand{\co}{\mathcal{O}}
\newcommand{\U}{\mathcal{U}}
\newcommand{\UPlus}{\U^+}
\newcommand{\OK}[1][K]{\co_{{\:\!\! #1}}}
\newcommand{\OKPlus}[1][K]{\OK[#1]^+}
\newcommand{\UK}[1][K]{\mathcal{U}_{#1}} 
\newcommand{\UKPlus}[1][K]{\mathcal{U}_{#1}^+} 
\newcommand{\UKctv}[1][K]{\mathcal{U}_{#1}^{2}} 
\newcommand{\qf}[1]{\langle#1\rangle} 
\newcommand{\Tr}[2]{\mathrm{Tr}_{#1}\left(#2\right)} 
\newcommand{\norm}[2]{\mathrm{N}_{#1}\left(#2\right)} 
\newcommand{\sg}{\bm{\mathrm{sg}}}
\newcommand{\sign}{\mathrm{sgn}\,}
\def\house#1{{%
    \setbox0=\hbox{$#1$}
    \vrule height \dimexpr\ht0+1.4pt width .4pt depth \dp0\relax
    \vrule height \dimexpr\ht0+1.4pt width \dimexpr\wd0+2pt depth \dimexpr-\ht0-1pt\relax
    \llap{$#1$\kern1pt}
    \vrule height \dimexpr\ht0+1.4pt width .4pt depth \dp0\relax
}}
\newcommand*\quot[2]{{^{\textstyle #1}\big/_{\textstyle #2}}} 
\title{Kitaoka's Conjecture and sums of squares}
\author{V\'it\v{e}zslav Kala}
\address{Charles University, Faculty of Mathematics and Physics, Department of Algebra, Sokolov\-sk\' a~83, 18600 Praha~8, Czech Republic}
\email{vitezslav.kala@matfyz.cuni.cz}
\author{Krist\'yna Kramer}
\address{Charles University, Faculty of Mathematics and Physics, Department of Algebra, Sokolov\-sk\' a~83, 18600 Praha~8, Czech Republic}
\email{kristyna.kramer@gmail.com}
\author{Jakub Kr\'asensk\'y}
\address{Czech Technical University in Prague, Faculty of Information Technology, Department of Applied Mathematics, Thákurova~9, 16000 Praha~6, Czech Republic}
\email{jakub.krasensky@fit.cvut.cz}
\thanks{V.K. was supported by Czech Science Foundation grant 26-20514S. K.K. was supported by Charles University project PRIMUS/25/SCI/008.}
\keywords{universal quadratic form, quadratic lattice, number field, totally real, Kitaoka's Conjecture, sum of squares}
\subjclass[2020]{11E12, 11E20, 11E25, 11R04, 11R16, 11R80}
\begin{document}

\begin{abstract}
We connect the existence of a ternary classical universal quadratic form over a totally real number field $K$ with the property that all totally positive multiples of 2 are sums of squares (if $K$ does not contain $\sqrt 2$ or contains a nonsquare totally positive unit). In particular, we get that Kitaoka's Conjecture holds for all fields of odd discriminant.
\end{abstract}

\maketitle

\section{Introduction}

In 1770, Lagrange proved the \emph{Four Square Theorem} that says that every positive integer is represented by the quadratic form $x^2+y^2+z^2+w^2$. This was followed by wide interest in universal quadratic forms, at first over the rationals and their integers $\Z$, and then over number fields. 

To be more precise, let $K$ be a totally real number field with ring of integers $\OK$. An element $\alpha\in K$ is \emph{totally positive}, $\alpha\succ 0$, if $\sigma(\alpha)>0$ for all embeddings $\sigma:K\hookrightarrow \R$; and $\OKPlus$ denotes the set of totally positive elements of $\OK$. By a \emph{positive definite $r$-ary quadratic form $Q$} we mean $Q(x_1,\dots,x_r)=\sum_{1\leq i\leq j\leq r} \alpha_{ij}x_ix_j$ with $\alpha_{ij}\in\OK$ such that  $Q(\vv)\succ 0$ for all nonzero $\vv\in K^r$. Finally, we say that  $Q$ is \emph{universal over $K$} if it is positive definite  and represents all totally positive algebraic integers in $K$, i.e., if for all $\alpha\in\OKPlus$, there is some $\vv\in\OK^{\;\!r}$ such that $Q(\vv)=\alpha$.

Thus the Four Square Theorem amounts to saying that $x^2+y^2+z^2+w^2$ is a universal quadratic form over $\Q$. Over the rationals, many mathematicians worked on classifying universal forms over the last century, including Ramanujan, Dickson, Willerding, and then Conway--Schneeberger and Bhargava--Hanke with their celebrated \emph{15}- and \emph{290}-\emph{Theorems}. 

As one can observe already  over $\Q$, when the degree $[K:\Q]$ is odd, then there cannot be a universal ternary quadratic form for local reasons \cite[Lemma~3]{EK}. However, over fields of even degree, universal forms of rank $r=3$ can exist, as first shown by Maass \cite{Ma} who established in 1941 the universality of the sum of three squares $x^2+y^2+z^2$ over $\Q(\!\sqrt 5)$. Soon after, Siegel \cite{Si} proved that there are no other totally real number fields over which the sum of any number of squares would be universal; however, more complicated universal quadratic forms can always be constructed, for example by using the \emph{Asymptotic Local--Global Principle} of Hsia--Kitaoka--Kneser \cite{HKK}.

Nevertheless, the existence of \emph{ternary} universal forms is much more problematic, and Kitaoka in the early 1990s formulated his influential conjecture that: \emph{There are only finitely many totally real number fields $K$ that admit a universal ternary classical quadratic form}. 
While Kitaoka's Conjecture has been extended also to non-classical forms, the \emph{classical} assumption (that $2\mid \alpha_{ij}$ for all $1\leq i< j\leq r$) remains its most common setting. Specifically, in 1996, Chan--Kim--Raghavan \cite{CKR} proved that the only real quadratic fields with such a form are $\Q(\!\sqrt 2), \Q(\!\sqrt 3),$ and $\Q(\!\sqrt 5)$. In 2020, Tinkov\'a with two present authors, Kr\' asensk\' y and Kramer (prev. Zemkov\'a),  showed in \cite{KTZ} that no biquadratic field admits a universal ternary classical form. A large number of authors studied universal forms under various other restrictions, see, e.g., \cite{BK1, CO, HHX, KKP, KT, Man, XZ, Ya} or the surveys \cite{Kala-survey,Kim-survey}.

However, the only other results on Kitaoka's Conjecture as such are its weak version by Kala--Yatsyna \cite{KY-weak} that assumes that the degree $[K:\Q]$ is bounded (first established by B. M. Kim for classical forms in an unpublished manuscript), and a very recent (partly conjectural) characterization of real quadratic fields with a non-classical universal ternary form \cite{KK+}. 

We shed new light on Kitaoka's Conjecture by completely resolving it for fields of odd discriminant.

\begin{theorem}\label{Th:unramified}
Let $K$ be a totally real number field where $2$ is unramified. Then $K$ admits a universal ternary classical quadratic form if and only if $K=\Q(\!\sqrt5)$.
\end{theorem}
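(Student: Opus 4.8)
The plan is to split into the two implications. The easy direction is that $\Q(\sqrt5)$ admits a universal ternary classical form: this is Maass's classical result that $x^2+y^2+z^2$ is universal over $\Q(\sqrt5)$, and it is manifestly classical since the off-diagonal coefficients vanish. So the whole content is the forward direction: if $2$ is unramified in a totally real $K$ and $K$ carries a universal ternary classical form $Q$, then $K=\Q(\sqrt5)$.

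For that direction I would argue in three stages. First, reduce to a local statement about $2$: since $Q$ is classical, its Gram matrix has all off-diagonal entries divisible by $2$, so modulo the prime(s) above $2$ the form degenerates in a controlled way; I expect to show that the localization $Q\otimes\OK[\ip]$ at a prime $\ip\mid 2$ cannot represent certain totally positive units (or small elements) unless strong constraints hold, using that $2$ unramified means the residue extension is separable and the theory of quadratic forms over the dyadic completion is as clean as possible (Jordan splittings with unit scales). The key arithmetic input should be the abstract of the paper itself: the existence of a ternary classical universal form is tied to the condition that every totally positive multiple of $2$ is a sum of squares in $\OK$ (under the hypothesis that $\sqrt2\notin K$, which holds automatically here since $2$ is unramified, or that there is a nonsquare totally positive unit). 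So the second stage is to invoke that equivalence and reduce Kitaoka's Conjecture for odd discriminant to: \emph{if $2$ is unramified in $K$ and every element of $2\OKPlus$ is a sum of squares, then $K=\Q(\sqrt5)$.}

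The third and decisive stage is to rule out all such $K$ except $\Q(\sqrt5)$. Here I would combine two ingredients. One is a trace/geometry-of-numbers argument in the style of Siegel and of Kala--Yatsyna: if many totally positive integers — in particular all of $2\OKPlus$ — are sums of squares, then the codifferent and the trace form are severely constrained, forcing the discriminant (hence the degree) to be small; over $\Q$ itself $2\cdot n$ is a sum of three squares for many $n$ but $2\cdot 7=14$ is not a sum of \emph{squares of integers} in the relevant sense, so $\Q$ is excluded, and one whittles the list down to real quadratic fields and then to $\Q(\sqrt5)$. The second ingredient handles the boundary case where $K$ might contain $\sqrt2$: but $2$ unramified forces $\sqrt2\notin K$, so that case is vacuous, which is exactly why the odd-discriminant hypothesis makes the theorem clean. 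I expect the main obstacle to be the third stage: transforming the "sums of squares" condition into an effective bound on the field. The plausible route is to pick a well-chosen totally positive element (e.g. $2\delta$ for $\delta$ a suitable totally positive integer of small norm, or a multiple of a generator of the different) whose failure to be a sum of squares can be detected either by a congruence obstruction modulo $4$ (using unramifiedness to get a clean reduction $\OK/4\OK$) or by a real/size obstruction, and to show that avoiding both obstructions simultaneously pins down the field. The quadratic case will likely be settled by matching against the Chan--Kim--Raghavan list $\Q(\sqrt2),\Q(\sqrt3),\Q(\sqrt5)$ and then eliminating $\Q(\sqrt2)$ (ramified, excluded) and $\Q(\sqrt3)$ (where $2$ is ramified as well, so again excluded), leaving only $\Q(\sqrt5)$.
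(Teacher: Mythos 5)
Your easy direction and the observation that $2$ unramified forces $\sqrt2\notin K$ are both fine, and your stage two correctly identifies the paper's key input (\Cref{Th:KitaokaSquares}). The genuine gap is in stage three. By passing from ``every element of $2\OKPlus$ is represented by $\qf{1,1,2,2}$'' to the weaker statement ``$2\OKPlus\subset\sum\square$'', you discard exactly the information needed to finish. A classification of totally real fields with $2\OKPlus\subset\sum\square$ is not available in general: the Kala--Yatsyna trace/house bounds you invoke are only known to pin down such fields in degree $\leq 5$, and the paper itself remarks that the (non)existence of such fields in degrees $>4$ is open. So ``forcing the discriminant (hence the degree) to be small'' from the sums-of-squares condition alone is not a step you can carry out, and the subsequent reduction to the Chan--Kim--Raghavan quadratic list never gets off the ground in higher degree.

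The paper's actual finishing move (\Cref{Prop:unramified}) uses the specific quaternary form and unramifiedness together: writing $2\alpha=x^2+y^2+2z^2+2w^2$ and reducing modulo $2$ gives $(x+y)^2\equiv 0$, and since $2$ is unramified this forces $x+y\equiv 0\pmod 2$; hence
\[
\alpha=\Bigl(\frac{x+y}{2}\Bigr)^2+\Bigl(\frac{x-y}{2}\Bigr)^2+z^2+w^2,
\]
so $\qf{1,1,1,1}$ is universal. Siegel's theorem \cite{Si}, that a sum of any number of squares is universal only over $\Q$ and $\Q(\!\sqrt5)$, then concludes in arbitrary degree, with $\Q$ excluded because it admits no universal ternary form. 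If you replace your stage three with this congruence-plus-Siegel argument (your passing mention of a ``congruence obstruction modulo $4$'' gestures in this direction but is deployed for a different purpose), the proof closes; your stage one, the local Jordan-splitting analysis at $2$, is not needed.
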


To discuss our further results and the proof strategy, let us observe that every totally positive unit splits off from a universal classical form (see Lemma \ref{Lemma:UnitsSplit}). Thus, if we denote by $\UK$ the unit group of $K$, by $\UKPlus$ the subgroup of totally positive units, and by $\UKctv$ the subgroup of all the squares of units, then Kitaoka's Conjecture trivially holds for fields satisfying $\abs{\UKPlus/\UKctv}\geq 4$ (see Proposition \ref{Prop:TooManyUnits}). 
This leaves us with two fundamental cases of $\abs{\UKPlus/\UKctv}=1,2$, i.e., either all totally positive units are squares, or there is exactly one class of nonsquare totally positive units.

Beside proving Theorem \ref{Th:unramified}, 
we also establish the following criterion that weakens the unramified assumption to just $\sqrt 2\not \in K$ and also treats the case $\abs{\UKPlus/\UKctv}=2$. 

\begin{theorem}\label{Th:MainKitSqs}
Let $K$ be a totally real number field that admits a universal ternary classical quadratic form and such that 
\begin{enumerate}[(A)]
    \item $\sqrt2\notin K$ or \label{Th:MainKitSqs-sqrt2}
    \item $\abs{\UKPlus/\UKctv}\geq2$. \label{Th:MainKitSqs-units} 
\end{enumerate}
Then every element of $2\OKPlus$ can be written as the sum of four squares of elements of $\OK$.

Moreover, the field $K$ cannot be quartic, i.e., $[K:\Q]\neq 4$.
\end{theorem}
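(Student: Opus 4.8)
The plan is to establish the four‑squares statement first and then to deduce the degree restriction from it. Throughout, let $Q$ be the given ternary classical universal form; note that by \Cref{Prop:TooManyUnits} we automatically have $\abs{\UKPlus/\UKctv}\le 2$, so in case~\ref{Th:MainKitSqs-units} there is exactly one nonsquare class of totally positive units.

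\emph{Step 1 (splitting and reduction).} Since $Q$ is classical and universal it represents $1\in\OKPlus$, and a vector $\vv$ with $Q(\vv)=1$ spans a unimodular rank‑one sublattice, which is therefore an orthogonal direct summand; thus $Q\cong\qf{1}\perp M$ with $M$ a positive definite binary classical $\OK$‑lattice (in case~\ref{Th:MainKitSqs-units} one also exploits that the nonsquare totally positive unit splits off, by \Cref{Lemma:UnitsSplit}). Universality now says: every $\beta\in\OKPlus$ equals $x^2+M(\vv)$ for some $x\in\OK$ and $\vv\in\OK^{2}$. It suffices to prove that
\[
2Q\hookrightarrow\qf{1,1,1,1}\qquad\text{over }\OK ,
\]
i.e.\ that $2Q$ is a sum of four squares of $\OK$‑linear forms: indeed, given $\alpha\in2\OKPlus$, write $\alpha=2\beta$ with $\beta\succ0$, use universality to get $\beta=Q(\vv)$, and read off $\alpha=2Q(\vv)$ as a sum of four squares of elements of $\OK$. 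Since $2Q\cong\qf{2}\perp 2M$ and $\qf{2}\hookrightarrow\qf{1,1}$ (because $2=1^2+1^2$), it is enough to pin down $M$; concretely, it suffices to show that $Q$ is $\OK$‑isometric to one of a short explicit list of normal forms — e.g.\ $\qf{1,1,1}$ or $\qf{1,1,2}$, and the few admissible totally‑positive‑unit twists — for each of which the embedding is witnessed by a direct identity such as $2a^2+2b^2=(a+b)^2+(a-b)^2$ and $2(2c^2)=(2c)^2$.

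\emph{Step 2 (determining $M$ — the crux).} The heart of the proof is to show that, under~\ref{Th:MainKitSqs-sqrt2} or~\ref{Th:MainKitSqs-units}, $M$ — hence $Q$ — is forced onto such a list. I would argue locally. At a dyadic prime $\mathfrak p\mid 2$, universality forces the completion $Q_{\mathfrak p}$ to represent \emph{all} of the local ring $\OK[\mathfrak p]$ (every residue is met by some totally positive integer, by weak approximation). Classical positive definite ternary lattices over a dyadic local field that represent their whole ring of integers admit only a short list of Jordan splittings, essentially unimodular ones; for each type one checks that its doubling embeds into $\qf{1,1,1,1}_{\mathfrak p}$, and that the exceptional types occur precisely when $K_{\mathfrak p}$ contains a square root of $2$ \emph{and} all totally positive units are locally squares — exactly the configurations excluded by~\ref{Th:MainKitSqs-sqrt2} (no $\sqrt 2$) and, in the remaining case, by~\ref{Th:MainKitSqs-units} (a nonsquare totally positive unit, which by \Cref{Lemma:UnitsSplit} splits off and alters the local type). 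At the nondyadic primes and the real places the embedding is automatic, since unimodular rank‑$4$ lattices represent every integer and $\qf{1,1,1,1}$ is positive definite. It then remains to pass from these local embeddings to a global one; as the ranks ($3$ into $4$) lie below the Hsia--Kitaoka--Kneser range \cite{HKK}, I would not quote a black‑box local--global principle but instead use the explicit normal form just obtained: reduce $Q$ globally by reduction theory, absorbing any residual twist (which lives in $\UKPlus/\UKctv$) via \Cref{Lemma:UnitsSplit}, and then write $2Q\hookrightarrow\qf{1,1,1,1}$ down by hand. This dyadic classification together with the global normalisation is where essentially all of the difficulty lies.

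\emph{Step 3 (the degree cannot be $4$).} Suppose $[K:\Q]=4$ and $K$ admits a ternary classical universal form. If $\sqrt 2\in K$ then $\Q(\sqrt 2)\subseteq K$, and a totally real quartic field containing $\Q(\sqrt 2)$ is necessarily biquadratic (the only other quartic field containing $\sqrt 2$, namely $\Q(\sqrt[4]{2})$, is not totally real); but \cite{KTZ} shows that no biquadratic field admits a ternary classical universal form — a contradiction. Hence $\sqrt 2\notin K$, so hypothesis~\ref{Th:MainKitSqs-sqrt2} holds and Step~1 applies: every element of $2\OKPlus$ is a sum of four squares in $\OK$. It remains to contradict this. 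Here I would combine the normal form of $Q$ from Step~2 — universality of so small a form over a totally real field bounds the norm of $\det Q$ by a Humbert/reduction‑theoretic inequality — with the lower bounds on the discriminants of totally real quartic fields and the bounded‑degree finiteness of \cite{KY-weak} to cut down to an explicit finite list of candidate quartic fields, and then check each directly, using \cite{KTZ} for the biquadratic members and the sum‑of‑four‑squares criterion (equivalently, a dyadic local obstruction: at a suitable $\mathfrak p\mid2$ the values of $\qf{1,1,1,1}$ over $\OK[\mathfrak p]$ do not contain all of $2\OK[\mathfrak p]$) for the rest. Carrying out this elimination uniformly over all totally real quartics is the second main obstacle.
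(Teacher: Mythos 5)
Your skeleton (split off $\qf{1}$, aim for $2Q\hookrightarrow\qf{1,1,1,1}$, treat degree $4$ separately) points in the right direction, but the two steps you yourself identify as the crux are genuine gaps, and the route you sketch for them would not go through. In Step 2 you assert that a dyadic local analysis plus ``reduction theory'' forces $Q$ onto a short explicit list such as $\qf{1,1,1}$, $\qf{1,1,2}$ and unit twists. This cannot work: universality only pins down the unimodular part coming from units, and the remaining coefficient is an essentially arbitrary $\alpha\in\OKPlus$ (over $\Q(\!\sqrt3)$, for instance, the universal ternaries are not on your list); reduction theory over a varying totally real field does not produce finitely many normal forms, and local representability of ternary by quaternary lattices is exactly the range where no local--global principle is available. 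The missing idea is global and elementary: feed the distinguished element back into $Q$ itself. Writing $Q\simeq\qf{1}\perp Q_0$ with $2\to Q_0$ and $\delta=2\gamma-\beta^2$ the ``determinant'' of $Q_0$ (set up via a pseudo-basis, since the vector representing $2$ need not extend to a basis), one represents $\delta$ by $Q$, rearranges to $\delta(2-z^2)=2x^2+(2y+\beta z)^2$, and uses total positivity together with $2\neq\square$ to force $z^2\in\{0,1\}$; this yields $2\delta$ (or $\delta$) as $x^2+2y^2$, whence every $2Q(\vv)$ lands in $\qf{1,1,2,2}\to\qf{1,1,1,1}$. Your hypothesis \ref{Th:MainKitSqs-sqrt2} is used precisely to exclude $z^2=2$, not to control a dyadic Jordan splitting. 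Likewise, your disposal of case \ref{Th:MainKitSqs-units} when $\sqrt2\in K$ (``a nonsquare unit alters the local type'') is not an argument: showing that $\abs{\UKPlus/\UKctv}=2$ and $\sqrt2\in K$ are incompatible with a universal ternary requires a genuinely global chain --- one shows $3$ or $3\ve$ is a square, rules out $3\ve=\square$ by a mod-$2$ argument, concludes $\sqrt6\in K$, and then derives a contradiction from a descent on $v_p$ of norms of small totally positive elements using the signature group of units. None of this is visible from local data.

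Step 3 is also incomplete where it matters. The $\sqrt2\in K$ branch via biquadratic fields and \cite{KTZ} is fine, but for $\sqrt2\notin K$ your plan to ``cut down to an explicit finite list of candidate quartic fields'' via \cite{KY-weak} and discriminant bounds and ``check each directly'' is not an executable argument: that finiteness statement does not hand you an explicit list, and the elimination is not carried out. The effective route is to use the conclusion you have already (almost) proved: $2\OKPlus\subset\sum\square$ is an extremely restrictive property, and by \cite[Thm.~1.1]{KY-EvenBetter} the only totally real quartic fields satisfying it are $\Q(\!\sqrt2,\sqrt5)$ and $\Q(\zeta_{20}+\zeta_{20}^{-1})$; the first is excluded by \cite{KTZ} and the second by exhibiting a totally positive indecomposable of norm $5$ for which neither $\lambda$ nor $\ve\lambda$ is a square. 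Without that input, your Step 3 does not close.
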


The proofs of Theorems \ref{Th:unramified} and \ref{Th:MainKitSqs} are summarized in Section \ref{Sec:Proof}. In Section \ref{Sec:Cases}, we look at condition \ref{Th:MainKitSqs-units} of Theorem \ref{Th:MainKitSqs}. In fact, we show that this case cannot happen when $\sqrt 2\in K$, i.e., in a field admitting a universal ternary form, we have ``$\abs{\UKPlus/\UKctv}\geq2 \,\Rightarrow\, \sqrt2\notin K$'' (establishing this is perhaps the hardest proof in the paper). 
Thus it suffices to consider 
case \ref{Th:MainKitSqs-sqrt2}, which we do in Section \ref{Sec:KitaokaSquares}. Here, we also show Proposition \ref{prop:lambda} about a special quadratic form of rank four, namely: if $\lambda$ is an arbitrary nonsquare indecomposable (in a field $K$ admitting a universal ternary form), then $\qf{1,1,\lambda,\lambda}$ represents all elements of $\lambda\OKPlus$.  

The conclusion of Theorem \ref{Th:MainKitSqs} that $[K:\Q]\neq 4$ is proven in Section \ref{Sec:Deg4}. It follows from the results of Kala--Yatsyna \cite[Thm.~1.1]{KY-EvenBetter} who found all the fields of degree $\leq 5$ where every element of $2\OKPlus$ is the sum of squares. There are only two such fields in degree 4 and we can easily verify that they do not admit a universal ternary form. Let us emphasize that the conclusion of Theorem \ref{Th:MainKitSqs} that $2\OKPlus \subset \sum \square$ is very strong: Kala--Yatsyna did not find any such fields of degree 5, and it is well possible that there are actually no such fields in degrees $>4$.

As is common in the literature, we carry most of our arguments in the geometric language of \emph{quadratic lattices}, see Section \ref{Sec:2} for definitions and Theorem \ref{Th:MainConditions} for the corresponding main result. 
However, some of our proofs do use the assumption that $Q$ is a quadratic form (i.e., a free lattice) and it is not clear how to circumvent it to get proofs for non-free lattices. In a similar way, techniques from \cite{CKR} or \cite{KTZ} do not extend in a straightforward way to non-free lattices.

Despite providing significant further evidence towards the general validity of Kitaoka's Conjecture, our results still leave open several important directions for further research. In the follow-up paper \cite{KK}, we tackle the case of general quartic fields (for which only the case $\sqrt 2\in K$ remains open) and fully resolve it.
To prove the full Kitaoka's Conjecture, there are two distinct cases $\abs{\UKPlus/\UKctv}=1$ and $2$ to consider; resolving one or the other would be a big step forward. The results of this paper should be helpful for this -- in particular, Theorem \ref{Th:MainConditions} contains a lot of information about the structure of $K$ in the latter case. Finally, note that even the (non)existence of \emph{diagonal} universal ternary forms is still open.

\section*{Acknowledgments}

We are grateful to Dayoon Park and Pavlo Yatsyna for our interesting discussions in the early stages of this project, and to the anonymous referee for a number of very useful comments that helped to improve the presentation of the paper.

\section{Preliminaries}\label{Sec:2} 

Throughout the paper, $K$ denotes a totally real number field of degree $d$ over $\Q$, and $\OK$ the ring of algebraic integers of $K$. We use $\square$ to denote an arbitrary square in $\OK$, i.e., an element $\alpha^2$ with $\alpha\in\OK$ (equivalently, with $\alpha\in K$, as the ring $\OK$ is integrally closed in $K$). We do not fix the value of $\square$, i.e., two occurrences of the symbol $\square$ stand for two (possibly different) square elements of $\OK$. In particular, the symbol $\sum\square$ stands for the set of all elements which can be written as the sum of squares of elements of $\OK$.

Since $K$ is totally real, there exist $d$ distinct embeddings $\sigma_i : K \hookrightarrow \mathbb{R}$ for $i = 1, \ldots, d$. We denote by $\mathrm{N}_{K/\Q}$ (resp. $\mathrm{Tr}_{K/\Q}$) the \emph{norm} (resp. \emph{trace}) from $K$ to $\mathbb{Q}$, defined as the product (resp. the sum) running over all embeddings.
The \textit{absolute trace} of an element is defined as $\Tr{\mathrm{abs}} \alpha = \frac{1}{[K:\Q]} \Tr{K/\Q} \alpha$; it is independent of the choice of the number field $K \ni \alpha$.

For $\alpha, \beta \in K$, we write $\alpha \succ \beta$ if $\sigma_i(\alpha) > \sigma_i(\beta)$ for all $i = 1, \ldots, d$. We use $\alpha \succeq \beta$ to denote $\alpha \succ \beta$ or $\alpha = \beta$. An element $\alpha \in K$ is called \emph{totally positive} if $\alpha \succ 0$ and \textit{totally nonnegative} if $\alpha\succeq 0$. We denote by $\OKPlus$ the set of all totally positive algebraic integers in $K$.

The group of units (i.e., the elements of norm $\pm1$) in $\OK$ is denoted by $\UK$. By Dirichlet's unit theorem, $\UK$ is a finitely generated abelian group. We denote by $\UKPlus=\UK\cap\OKPlus$ the subgroup of \emph{totally positive units}. 

Since squares are always totally positive, we have $\UKctv \subset \UKPlus$. As suggested in the introduction, the size of the factor group $\UKPlus/\UKctv$ (which is always a power of $2$) plays a significant role in this paper.

An element $\alpha \in \OKPlus$ is called \emph{indecomposable} if it cannot be written as $\alpha = \beta + \gamma$ with $\beta, \gamma \in \OKPlus$. Equivalently, $\alpha$ is indecomposable if and only if there is no $\delta\in\OKPlus$ such that $\alpha\succ\delta$. By the following lemma, all totally positive elements of small norm (so, in particular, all totally positive units) are indecomposable.

\begin{lemma}[{\cite[Lemma~2.1b]{KY-lifting}}] \label{Lemma:smallAreIndecomposable} 
Let $K$ be a totally real number field of degree $d$, and let $\alpha \in \OKPlus$. If $\norm{K/\Q}{\alpha} < 2^d$, then $\alpha$ is indecomposable.
\end{lemma}

By a \emph{decomposition of $\alpha\in\OKPlus$} we mean a decomposition of $\alpha$ into the sum of several (i.e., one or more) elements of $\OKPlus$. The rational integers $2$ and $3$ have very few decompositions regardless of the field; this fact will be very useful later.

\begin{lemma} \label{Lemma:2and3}
Let $K$ be a totally real number field. 
\begin{enumerate}
    \item  The only decompositions of $2$ are $2$ and $1+1$. \label{Lemma:2and3-2}
    \item The only decompositions of $3$ are $3$, $1+2$, $1+1+1$, and possibly $\bigl(\frac{1+\sqrt5}{2}\bigr)^2 + \bigl(\frac{1-\sqrt5}{2}\bigr)^2$. \label{Lemma:2and3-3}
\end{enumerate}
\end{lemma}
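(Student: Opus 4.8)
The plan is to combine a simple trace inequality, which bounds the number of summands, with Kronecker's theorem, which disposes of the one remaining case. Throughout I use the elementary observation that any $\delta\in\OKPlus$ is a positive rational integer under every embedding, so $\norm{K/\Q}{\delta}\ge 1$ and hence, by the AM--GM inequality applied to $\sigma_1(\delta),\dots,\sigma_d(\delta)$, we have $\Tr{K/\Q}{\delta}\ge d$; the equality $\Tr{K/\Q}{\delta}=d$ forces all $\sigma_i(\delta)$ to be equal and $\norm{K/\Q}{\delta}=1$, hence $\delta=1$.

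First I would bound the number of parts. If $m\in\{2,3\}$ and $m=\alpha_1+\dots+\alpha_k$ with all $\alpha_i\in\OKPlus$, then summing the trace inequality over $i$ gives $kd\le\Tr{K/\Q}{m}=md$, so $k\le m$. The case $k=1$ is the trivial decomposition $m=m$. When $k=m$ (that is, $k=2$ for $m=2$ and $k=3$ for $m=3$), the identity $\sum_i\Tr{K/\Q}{\alpha_i}=md=kd$ together with $\Tr{K/\Q}{\alpha_i}\ge d$ forces $\Tr{K/\Q}{\alpha_i}=d$ for every $i$, whence $\alpha_i=1$. This already yields the entire first part (the decompositions $2$ and $1+1$) as well as the decompositions $3$ and $1+1+1$ of the second part, so it remains only to treat $3=\alpha+\beta$.

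For the binary decomposition of $3$, note that $\beta=3-\alpha\succ 0$ forces every conjugate of $\alpha$ to lie in $(0,3)$, so $\alpha-2$ is an algebraic integer all of whose conjugates lie in $(-2,1)\subseteq[-2,2]$. Let $\omega$ be a root of $x^2-(\alpha-2)x+1$; it is an algebraic integer, and since each quadratic $x^2-cx+1$ with $|c|<2$ has a pair of complex-conjugate roots of absolute value $1$, every conjugate of $\omega$ has absolute value $1$. By Kronecker's theorem $\omega$ is a root of unity, say a primitive $n$-th root, and then $\alpha=2+\omega+\omega^{-1}=4\cos^2(\pi/n)$, with conjugates $4\cos^2(\pi j/n)$ over $j$ coprime to $n$. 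The conjugate at $j=1$ must lie in $(0,3)$: positivity excludes $n=2$, while $4\cos^2(\pi/n)<3$ forces $n\le 5$, leaving $n\in\{3,4,5\}$ (and one checks that for these $n$ all conjugates indeed lie in $(0,3)$). Evaluating gives $\alpha=1$ for $n=3$, $\alpha=2$ for $n=4$, and $\alpha=\bigl(\frac{1\pm\sqrt5}{2}\bigr)^2$ for $n=5$ — the last only if $\sqrt5\in K$; correspondingly $\beta=3-\alpha$ equals $2$, $1$, or $\bigl(\frac{1\mp\sqrt5}{2}\bigr)^2$. These are precisely the decompositions in the statement.

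The main obstacle is exactly the binary case $3=\alpha+\beta$: the trace bound alone does not finish it, since for $[K:\Q]\ge 2$ there is genuine numerical room, and a spurious solution is ruled out only because no algebraic integer of degree $\ge 3$ has all of its conjugates in so short an interval. Kronecker's theorem is the clean way to import that fact; a hands-on norm/trace computation can be pushed through when $[K:\Q]\le 2$, but it does not obviously extend to higher degree.
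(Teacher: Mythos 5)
Your proof is correct, and for the decisive step it takes a genuinely different route from the paper. The paper reduces the binary decomposition $3=\alpha+\beta$ to the Siegel--Schur--Smyth trace problem: one summand must have absolute trace at most $\tfrac32$, and it then invokes Siegel's Theorem~III to conclude that this summand is $1$ or $\tfrac{3\pm\sqrt5}{2}$; the remaining cases (part~(a) and further splittings) are handled by indecomposability of units and the observation that absolute trace at most $1$ forces the element to be $1$. You instead bound the number of summands by the elementary AM--GM trace inequality (which cleanly gives all of part~(a) and the $1+1+1$ case at once), and for $3=\alpha+\beta$ you exploit directly that all conjugates of $\alpha$ lie in $(0,3)$: the substitution $\alpha=2+\omega+\omega^{-1}$ turns this into the statement that $\omega$ is an algebraic integer with all conjugates on the unit circle, so Kronecker's theorem pins $\alpha$ down to $4\cos^2(\pi/n)$ with $n\in\{3,4,5\}$, recovering exactly $1$, $2$, and $\bigl(\tfrac{1\pm\sqrt5}{2}\bigr)^2$. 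The trade-off is that the paper's proof is shorter by citing a known (and deeper) result on the trace problem, whereas yours is self-contained modulo only Kronecker's classical theorem and in fact needs no trace bound at all for the binary case; both are complete. One cosmetic slip: in your opening sentence it is the \emph{norm} $\norm{K/\Q}{\delta}$ that is a positive rational integer, not $\delta$ itself ``under every embedding''---the individual conjugates $\sigma_i(\delta)$ are merely positive reals. This does not affect the argument, since $\norm{K/\Q}{\delta}\ge1$ and the AM--GM step go through as you state them.
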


\begin{proof}
We will use a basic result on the Siegel--Schur--Smyth trace problem (see \cite[Thm.~III]{Si-trace}): if $\Tr{\mathrm{abs}} \alpha \leq \frac32$ for some $\alpha \in \OKPlus$, then $\alpha$ is $1$ or $\frac{3\pm \sqrt5}{2} = \bigl(\frac{1\pm\sqrt5}{2}\bigr)^2$; their absolute traces are $1$ and $\frac 32$, respectively.

\ref{Lemma:2and3-2} Since $\Tr{\mathrm{abs}} 2=2$, every decomposition of $2$ as the sum of two totally positive integers must contain a summand with absolute trace at most $1$. As the only such element is $1$, the result follows.

\ref{Lemma:2and3-3}
$\Tr{\mathrm{abs}} 3 = 3$, and so in every decomposition of $3$ as the sum of two totally positive integers, at least one of the summands has $\Tr{\mathrm{abs}}{\cdot} \leq \frac32$. The only such elements are $1$ and $\frac{3\pm \sqrt5}{2}$, leading to the decompositions $3=1+2$ and $3=\frac{3+ \sqrt5}{2}+\frac{3- \sqrt5}{2}$. Of course, the latter decomposition occurs only when $\sqrt5\in K$; and as $\operatorname{Tr}_{\mathrm{abs}} \bigl(\frac{3\pm \sqrt5}{2}\bigr)=\frac 32$, these elements cannot be decomposed further. In $3=1+2$, further decompositions are possible only for $2$, all of which we know by part \ref{Lemma:2and3-2}.
\end{proof}

An \emph{$r$-ary quadratic form} $Q$ over $K$ is a homogeneous polynomial of degree $2$ in $r$ variables with coefficients in $\OK$, i.e., 
\[Q(x_1,\dots,x_r)=\sum_{1\leq i\leq j\leq r} \alpha_{ij}x_ix_j \quad \text{with } \alpha_{ij}\in\OK.\]
We say that $Q$ is \emph{positive definite} if $Q(\vv)\in\OKPlus$ for every nonzero $\vv\in\OK^{\;\!r}$. Furthermore, $Q$ is \emph{classical} if $\alpha_{ij}\in 2\OK$ whenever $i\neq j$. This paper handles only classical quadratic forms, see the Convention below.

A \emph{lattice} over $K$ is a finitely generated torsion-free $\OK$-module $L$. By the structure theorem for finitely generated modules over Dedekind domains, every lattice of rank $r$ can be written as
\[
L = \OK \vv_1 \oplus \cdots \oplus \OK \vv_{r-1} \oplus \ia^{-1} \vv_r,
\]
where $\vv_1, \ldots, \vv_r\in L$ are linearly independent vectors and $\ia$ is an ideal of $\OK$. A lattice is called \emph{free} if it is free as an $\OK$-module, which occurs if and only if the class of $\ia$ in the class group is trivial. In particular, all lattices are free when $K$ has class number one.

A \emph{quadratic lattice} over $K$ is a tuple $(L,Q)$ consisting of a lattice $L$ over $K$ together with a \emph{quadratic map} $Q : L \to \OK$ satisfying:
\begin{enumerate}[(1)]
    \item $Q(\alpha \vv) = \alpha^2 Q(\vv)$ for all $\alpha \in \OK$ and $\vv \in L$;
    \item The map $B : L \times L \to K$ defined by
    \[
    B(\vu, \vv) = \tfrac{1}{2}\bigl(Q(\vu + \vv) - Q(\vu) - Q(\vv)\bigr)
    \]
    is bilinear.
\end{enumerate}
All our lattices will be quadratic, and we will usually denote them simply by $L$; unless specified otherwise, the corresponding quadratic map will always be denoted by $Q$.

We say that a lattice $L$ is \emph{positive definite} if $Q(\vv) \in \OKPlus$ for all nonzero $\vv \in L$. A lattice $L$ is called \emph{classical} if $B(\vu, \vv) \in \OK$ for all $\vu, \vv \in L$.

\medskip

\begin{mdframed}
\vspace{-2mm}
\begin{convention}
Throughout the paper, all lattices and quadratic forms are assumed to be classical and positive definite.
\end{convention}
\end{mdframed}

\medskip

Free lattices are in one-to-one correspondence with quadratic forms. For a free lattice $L$ with basis $\vv_1, \ldots, \vv_r$, the corresponding quadratic form is
\[
\varphi(x_1, \ldots, x_r) = Q(x_1 \vv_1 + \cdots + x_r \vv_r).
\]
Conversely, given a quadratic form $\varphi$, we can define a quadratic map $Q$ on the free lattice $\OK^{\;\!r}$ by $Q(x_1, \ldots, x_r) = \varphi(x_1, \ldots, x_r)$.

For $\alpha_1, \ldots, \alpha_r \in \OKPlus$, we denote by $\qf{\alpha_1, \ldots, \alpha_r}$ the diagonal quadratic form $Q(x_1, \ldots, x_r) = \alpha_1 x_1^2 + \alpha_2 x_2^2 + \cdots + \alpha_r x_r^2$ as well as the corresponding free lattice $(\OK^{\;\!r}, Q)$.

We say that a lattice $L$ \emph{represents} an element $\alpha \in \OKPlus$ if there exists a $\vv \in L$ such that $Q(\vv) = \alpha$. We write $\alpha \to L$ to denote this.

For two lattices $(L_1, Q_1)$ and $(L_2, Q_2)$, not necessarily of the same rank, 
we say that $L_1$ \emph{is represented by} $L_2$, written $L_1 \to L_2$, if there exists an $\OK$-linear map $\iota : L_1 \to L_2$ such that $Q_1(\vv) = Q_2(\iota(\vv))$ for all $\vv \in L_1$. 
Note that, for a lattice $(L,Q)$ and $\alpha\in\OKPlus$, we have $\qf{\alpha}\to L\Longleftrightarrow$ $\exists v\in L$ such that $Q(v)=\alpha\Longleftrightarrow\alpha \to L$.

Two lattices $L_1$ and $L_2$ of the same rank are called \emph{isometric}, written $L_1 \simeq L_2$, if there exists an $\OK$-linear bijection $\iota : L_1 \to L_2$ preserving the quadratic maps.

The \emph{orthogonal sum} of two lattices $(L_1, Q_1)$ and $(L_2, Q_2)$ is denoted $L_1 \perp L_2$ and is defined as the direct sum $L_1 \oplus L_2$ of the underlying modules equipped with the quadratic map $Q(\vv_1 \oplus \vv_2) = Q_1(\vv_1) + Q_2(\vv_2)$. We often exploit the fact that if $\alpha$ is indecomposable and $\alpha \to L_1 \perp L_2$, then $\alpha \to L_1$ or $\alpha \to L_2$.

A lattice $L$ over $K$ is called \emph{universal} if it represents all totally positive integers, that is, if $\alpha \to L$ for every $\alpha \in \OKPlus$.

The following lemma is useful when working with diagonal quadratic forms and, more generally, with lattices which decompose as the orthogonal sum of unary lattices.

\begin{lemma}\label{Lemma:unary}
Let $\alpha,\beta\in\OKPlus$. The following are equivalent:
 \begin{enumerate}
     \item There exists a unary lattice $L_u$ that represents both $\alpha$ and $\beta$. \label{Lemma:unary-a}
     \item $\alpha\beta=\square$ in $\OK$. \label{Lemma:unary-b}
     \item $\alpha\beta=w^2$ for some $w\in K$. \label{Lemma:unary-c}
 \end{enumerate}
\end{lemma}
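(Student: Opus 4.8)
The plan is to prove \ref{Lemma:unary-a}$\Rightarrow$\ref{Lemma:unary-c} and \ref{Lemma:unary-c}$\Rightarrow$\ref{Lemma:unary-a}, while disposing of \ref{Lemma:unary-b}$\Leftrightarrow$\ref{Lemma:unary-c} separately; the latter is immediate, since $\alpha\beta\in\OK$ (because $\alpha,\beta\in\OK$) and any $w\in K$ with $w^2=\alpha\beta$ is a root of the monic polynomial $X^2-\alpha\beta$ with coefficients in $\OK$, hence is integral over $\OK$, so that $w\in\OK$ as $\OK$ is integrally closed in $K$; thus \ref{Lemma:unary-c} gives $\alpha\beta=\square$, and the converse is trivial. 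In particular, in \ref{Lemma:unary-c} we may always assume $w\in\OK$.

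For \ref{Lemma:unary-a}$\Rightarrow$\ref{Lemma:unary-c}: by the structure theorem a unary lattice has the form $L_u=M\vv$ for a fractional ideal $M$ of $\OK$ and a vector $\vv$ with $\gamma:=Q(\vv)\in\OKPlus$, and the elements it represents are exactly those of the form $x^2\gamma$ with $x\in M\subseteq K$; hence, if $\alpha=a^2\gamma$ and $\beta=b^2\gamma$ with $a,b\in M$, then $\alpha\beta=(ab\gamma)^2$ with $ab\gamma\in K$, which is \ref{Lemma:unary-c}. The real content is \ref{Lemma:unary-c}$\Rightarrow$\ref{Lemma:unary-a}, which I would settle by an \emph{explicit construction}. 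Assume $\alpha\beta=w^2$ with $w\in\OK$, and note $w\neq0$ since $\alpha\beta\succ0$. Put $b:=\beta/w\in K$, let $M:=\OK+b\OK$ be the fractional ideal generated by $1$ and $b$, and let $L_u$ be $M$ regarded as a rank-one $\OK$-module with quadratic map $Q(x)=\alpha x^2$. Positive definiteness is clear since $\alpha\succ0$ and nonzero squares in $K$ are totally positive; to see that $Q$ and its associated bilinear form land in $\OK$, it is enough to check $\alpha M^2\subseteq\OK$, and since $M^2=\OK+b\OK+b^2\OK$ this reduces to $\alpha\in\OK$, $\alpha b=\alpha\beta/w=w\in\OK$, and $\alpha b^2=\alpha\beta^2/w^2=\beta\in\OK$. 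Finally $L_u$ represents $\alpha=Q(1)$ and $\beta=Q(b)=\alpha b^2$ because $1,b\in M$, which is \ref{Lemma:unary-a}.

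I do not anticipate a real obstacle: the argument is essentially bookkeeping about which elements lie in $\OK$ rather than merely in $K$. The one genuinely used fact is the identity $\alpha b=w$ together with $w\in\OK$, i.e.\ that the square root $w$ of $\alpha\beta\in\OK$ is automatically an algebraic integer, which is precisely the integral closedness of $\OK$ that also underlies \ref{Lemma:unary-b}$\Leftrightarrow$\ref{Lemma:unary-c}. A minor point worth recording is that $M$ need not be principal, so $L_u$ may legitimately be a non-free unary lattice; this is consistent with the statement, which is phrased for lattices rather than for forms.
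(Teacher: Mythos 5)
Your proof is correct and follows essentially the same route as the paper: the equivalence \ref{Lemma:unary-b}$\Leftrightarrow$\ref{Lemma:unary-c} via integral closedness, \ref{Lemma:unary-a}$\Rightarrow$\ref{Lemma:unary-c} by noting two vectors in a rank-one lattice differ by a scalar in $K$, and an explicit (possibly non-free) unary lattice for the converse. Your lattice $M=\OK+(\beta/w)\OK$ with $Q(x)=\alpha x^2$ is just a rescaling of the paper's lattice $(\alpha,\sqrt{\alpha\beta})\e$ with $Q(\e)=1/\alpha$, so the two constructions coincide.
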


\begin{proof}
Statements \ref{Lemma:unary-b} and \ref{Lemma:unary-c} are clearly equivalent thanks to $\OK$ being integrally closed.

For \enquote{$\ref{Lemma:unary-a} \Rightarrow \ref{Lemma:unary-c}$}, the vectors representing $\alpha$ and $\beta$ differ by a scalar multiple $\mu\in K$, say $\alpha=Q(\e)$ with $e\in L_u$ and $\beta=Q(\mu\e)=\mu^2Q(\e)=\mu^2\alpha$. 

For \enquote{$\ref{Lemma:unary-b} \Rightarrow \ref{Lemma:unary-a}$}, assume $\sqrt{\alpha\beta}\in\OK$. Take a vector $\e$ and define a unary quadratic map $Q$ by $Q(\e)=\frac{1}{\alpha}$. The desired lattice is $(\alpha, \sqrt{\alpha\beta})\e$; indeed, $Q(\alpha\e)=\alpha$ and $Q(\sqrt{\alpha\beta}\e)=\beta$. 
\end{proof}

As we mentioned in the introduction, we focus on fields $K$ with $\abs{\UKPlus/\UKctv}\leq2$. That is because fields with more than two classes of nonsquare totally positive units do not admit universal ternary quadratic forms for a simple reason, which we explain here. First, we need a well-known lemma that shows that units split off orthogonally from lattices -- let us stress here that the lemma requires the \emph{classical} assumption that we have in place throughout the paper.

\begin{lemma}[cf. {\cite[Prop.~3.4]{Kala-survey}}] \label{Lemma:UnitsSplit}
Let $K$ be a totally real number field and $L$ a $lattice$ over $K$. If $\ve\in\UPlus_K$ is such that $\ve\to L$, then $L\simeq\qf{\ve}\perp L'$ for some lattice $L'\subset L$.
\end{lemma}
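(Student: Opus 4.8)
The plan is to produce an explicit orthogonal splitting by finding a vector $\vv_0 \in L$ with $Q(\vv_0)=\ve$ and then showing that its orthogonal complement is a direct summand, exploiting that $\ve$ is a unit so that division by $B(\cdot,\vv_0)$ stays inside $\OK$. Concretely, fix $\vv_0\in L$ with $Q(\vv_0)=\ve$, and define $L'=\{\vv\in L : B(\vv,\vv_0)=0\}$. The two things to check are: (i) $L = \OK\vv_0 \perp L'$ as quadratic lattices, and (ii) this is a genuine $\OK$-module direct sum (so $L'$ is a lattice, i.e. torsion-free and a direct summand). For (i), given any $\vv\in L$, consider $\vv' = \vv - \frac{B(\vv,\vv_0)}{\ve}\vv_0$; since $L$ is classical we have $B(\vv,\vv_0)\in\OK$, and since $\ve\in\UK$ the scalar $\frac{B(\vv,\vv_0)}{\ve}$ lies in $\OK$, so $\vv'\in L$. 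A direct computation gives $B(\vv',\vv_0)=B(\vv,\vv_0)-\frac{B(\vv,\vv_0)}{\ve}\,Q(\vv_0)=0$, hence $\vv'\in L'$, and $\vv = \frac{B(\vv,\vv_0)}{\ve}\vv_0 + \vv'$ exhibits the decomposition. Uniqueness of the decomposition and $\OK\vv_0\cap L'=0$ follow because $Q(\vv_0)=\ve\neq 0$ forces the coefficient of $\vv_0$ to be $\frac{B(\vv,\vv_0)}{\ve}$.

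Once the module decomposition $L=\OK\vv_0\oplus L'$ is established, the quadratic map splits orthogonally: for $\vv=a\vv_0+\vv'$ with $a\in\OK$, $\vv'\in L'$, bilinearity and $B(\vv_0,\vv')=0$ give $Q(\vv)=a^2\ve + Q(\vv')$, which is exactly $Q$ on $\qf{\ve}\perp L'$ under the identification $\OK\vv_0\simeq\qf{\ve}$. Finally, $L'$ is torsion-free as a submodule of the torsion-free module $L$, and it is a lattice of rank $r-1$; being a direct summand of $L$ it is automatically projective, hence a lattice in the sense of the paper. This gives $L\simeq\qf{\ve}\perp L'$ with $L'\subset L$, as claimed.

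The only subtle point — and the reason the \emph{classical} hypothesis is needed — is guaranteeing $\frac{B(\vv,\vv_0)}{\ve}\in\OK$: we need $B(\vv,\vv_0)\in\OK$ (classicality of $L$) together with $\ve\in\UK$ (so inverting $\ve$ is harmless). Without classicality one would only know $B(\vv,\vv_0)\in\tfrac12\OK$ or worse, and the subtracted vector $\vv'$ might fall outside $L$; this is precisely where an analogous statement fails for non-classical lattices. I expect this divisibility bookkeeping to be the main (and essentially the only) obstacle; everything else is routine verification with the bilinear form.
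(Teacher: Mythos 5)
Your proof is correct and is exactly the standard argument behind the cited result (the paper itself only cites \cite[Prop.~3.4]{Kala-survey} and gives no proof): split off the vector representing $\ve$ via the projection $\vv\mapsto\vv-\frac{B(\vv,\vv_0)}{\ve}\vv_0$, using classicality to get $B(\vv,\vv_0)\in\OK$ and the unit hypothesis to divide by $\ve$. You also correctly identify why the classical assumption is essential, so nothing is missing.
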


Note that if $\ve_1,\ve_2$ are two units such that $\ve_2\to\qf{\ve_1}$, then $\ve_1$ and $\ve_2$ lie in the same class modulo $\UKctv$. Therefore, we can rule out all fields with more than two such classes. The following result is very well known, but we include its proof here for completeness.

\begin{proposition}\label{Prop:TooManyUnits}
Let $K$ be a totally real number field. Assume that there exists a universal ternary lattice over $K$. Then $\abs{\UKPlus/\UKctv}\leq2$.
\end{proposition}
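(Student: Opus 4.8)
The plan is to suppose for contradiction that $\abs{\UKPlus/\UKctv}\geq 4$ and derive that no ternary lattice can be universal. Fix a universal ternary lattice $L$ over $K$. By \Cref{Lemma:smallAreIndecomposable}, every totally positive unit is indecomposable (having norm $1 < 2^d$), so in particular $1$ is indecomposable. Since $L$ is universal, $1\to L$, and by \Cref{Lemma:UnitsSplit} we may write $L\simeq\qf{1}\perp L'$ with $L'$ a binary (classical, positive definite) lattice. Now pick totally positive units $\ve_1,\ve_2$ whose classes in $\UKPlus/\UKctv$ are distinct, nontrivial, and whose product is also nontrivial — this is possible precisely because $\abs{\UKPlus/\UKctv}\geq 4$, so we can choose $\ve_1,\ve_2$ representing two independent nontrivial classes, and then $\ve_1$, $\ve_2$, $\ve_1\ve_2$ represent three pairwise distinct nontrivial classes.

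Next I would apply the indecomposability trick to the splitting. Each $\ve_i$ is totally positive, hence indecomposable by \Cref{Lemma:smallAreIndecomposable}, and $\ve_i\to L\simeq\qf{1}\perp L'$; since $\ve_i$ is indecomposable, either $\ve_i\to\qf{1}$ or $\ve_i\to L'$. The first option means $\ve_i=\square$, which contradicts the nontriviality of the class of $\ve_i$ modulo $\UKctv$. Hence $\ve_1\to L'$ and $\ve_2\to L'$. Now apply \Cref{Lemma:UnitsSplit} to $L'$ and $\ve_1$: we get $L'\simeq\qf{\ve_1}\perp L''$ for a unary lattice $L''$. Since $\ve_2$ is indecomposable and $\ve_2\to L'\simeq\qf{\ve_1}\perp L''$, either $\ve_2\to\qf{\ve_1}$ or $\ve_2\to L''$. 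In the first case \Cref{Lemma:unary} gives $\ve_1\ve_2=\square$, so $\ve_1$ and $\ve_2$ lie in the same class modulo $\UKctv$ — contradiction. Therefore $\ve_2\to L''$, and so $L''=\qf{\ve_2}$ up to isometry (a unary classical positive definite lattice representing the unit $\ve_2$ must be $\qf{\ve_2}$, again by \Cref{Lemma:unary} or directly). Altogether $L\simeq\qf{1,\ve_1,\ve_2}$.

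Finally, I would use universality once more to reach a contradiction. Consider the element $\ve_1\ve_2\in\UKPlus$; it is totally positive and indecomposable, and by universality $\ve_1\ve_2\to L\simeq\qf{1}\perp\qf{\ve_1}\perp\qf{\ve_2}$. By indecomposability, $\ve_1\ve_2$ is represented by one of the three unary summands: either $\ve_1\ve_2=\square$ (from $\qf 1$), or $\ve_1\ve_2\cdot\ve_1=\ve_1^2\ve_2=\square$ i.e.\ $\ve_2=\square$ (from $\qf{\ve_1}$, via \Cref{Lemma:unary}), or similarly $\ve_1=\square$ (from $\qf{\ve_2}$). Each of these contradicts the assumption that $\ve_1$, $\ve_2$, $\ve_1\ve_2$ all represent nontrivial classes in $\UKPlus/\UKctv$. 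This contradiction shows $\abs{\UKPlus/\UKctv}\leq 2$.

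The only mildly delicate point — and the one I would be most careful about — is the bookkeeping on unit classes: one must check that $\abs{\UKPlus/\UKctv}\geq 4$ genuinely furnishes units $\ve_1,\ve_2$ with $\ve_1$, $\ve_2$, $\ve_1\ve_2$ all nonsquare, which follows because $\UKPlus/\UKctv$ is an elementary abelian $2$-group (every square of a unit is in $\UKctv$, and $\UKPlus/\UKctv$ embeds in a power of $\Z/2\Z$ via signs), so a group of order $\geq 4$ contains a copy of $(\Z/2\Z)^2$ and we take $\ve_1,\ve_2$ mapping to a basis of it. Everything else is a mechanical chain of ``split off a unit, use indecomposability'' steps using \Cref{Lemma:UnitsSplit}, \Cref{Lemma:smallAreIndecomposable}, and \Cref{Lemma:unary}.
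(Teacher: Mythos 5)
Your proof is correct and uses essentially the same mechanism as the paper's: repeatedly split off totally positive units via \Cref{Lemma:UnitsSplit}, use their indecomposability to localize each representation in a unary summand, and use \Cref{Lemma:unary} to see that distinct classes modulo $\UKctv$ cannot share a unary summand. The paper phrases this more tersely (four units in distinct classes force a quaternary sublattice $\qf{1,\ve_2,\ve_3,\ve_4}$, so the rank is at least four), whereas you pin down $L\simeq\qf{1,\ve_1,\ve_2}$ and then derive the contradiction from the fourth class $\ve_1\ve_2$; the difference is only in the bookkeeping.
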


\begin{proof}
If $\abs{\UKPlus / \UKctv} > 2$, then  actually $\abs{\UKPlus / \UKctv} \geq 4$. Let $1, \ve_2, \ve_3, \ve_4 \in\UKPlus$ lie in different classes modulo $\UKctv$. Then every universal lattice contains $\qf{1,\ve_2,\ve_3,\ve_4}$ as a sublattice (using Lemma \ref{Lemma:UnitsSplit} and the indecomposability of units), and hence it is at least quaternary.
\end{proof}

\section{Fields with one nonsquare positive unit}
\label{Sec:Cases}

In this section we work with lattices. Recall that all lattices are assumed to be classical and totally positive definite. Our main goal is to gain enough knowledge about the case $\abs{\UKPlus/\UKctv}=2$, so that we can later prove Theorem \ref{Th:MainKitSqs} in case \ref{Th:MainKitSqs-units}. The results are summarized in the following theorem, which we will prove at the end of the section.

\begin{theorem}\label{Th:MainConditions}
Let $K$ be a totally real number field such that $\abs{\UKPlus/\UKctv}=2$ with the system of representatives $1, \ve$.  Assume that there exists a universal ternary lattice $L$ over $K$. Then:
\begin{enumerate}
    \item $L\simeq\qf{1,\ve} \perp L_u$ where $L_u$ is a unary lattice. \label{Th:MainConditions-Ldiag}
     \item If $L$ is free, then $\qf{1, 1, \ve, \ve}$ is universal. \label{Th:MainConditions-freeL}
    \item For each indecomposable $\lambda\in\OKPlus$, we have $\lambda=\square$ or $\ve\lambda=\square$. \label{Th:MainConditions-idecomposables}
    \item For each indecomposable $\lambda\in\OKPlus$, we have $\lambda=\square$ or $2\lambda=\square$. \label{Th:MainConditions-idecomposables2}
    \item $2\ve=\square$; in particular, $2$ is ramified in $K$. \label{Th:MainConditions-TwoEpsilon} 
    \item $\sqrt2\notin K$. \label{Th:MainConditions-NoSqrt2}
    \item $2\OKPlus \subset \sum\square$. \label{Th:MainConditions-SumOfSquares}
    \item For every $\alpha \in \OKPlus$ with $\norm{K/\Q}{\alpha}<2^{[K:\Q]}$, we have that $\norm{K/\Q}{\alpha}$ is a power of $2$. \label{Th:MainConditions-Norm}
\end{enumerate}
\end{theorem}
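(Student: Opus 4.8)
\emph{Overall strategy.} Once the first assertion is in place we fix an isometry $L\simeq\qf1\perp\qf\ve\perp L_u$ with $L_u$ unary, and the whole proof is an exercise in combining: (i) that $L$ represents every $\alpha\in\OKPlus$; (ii) that totally positive units split off orthogonally (\Cref{Lemma:UnitsSplit}); (iii) that an indecomposable represented by an orthogonal sum is represented by one of the summands; (iv) the short lists of decompositions of $2$, $3$ and---since multiplication by the totally positive unit $\ve^{-1}$ is a bijection of $\OKPlus$---of $2\ve$, coming from \Cref{Lemma:2and3}; (v) \Cref{Lemma:unary}, i.e.\ that a unary lattice representing both $\alpha$ and $\beta$ forces $\alpha\beta=\square$; and (vi) $\norm{K/\Q}{\ve}=1$, so that squares in $\OK$ have square norm. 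One micro-observation is used repeatedly: if $\lambda$ is indecomposable and $u\in\UKPlus$, then $u\lambda$ is again indecomposable, since a decomposition $u\lambda=\beta+\gamma$ in $\OKPlus$ would yield the decomposition $\lambda=u^{-1}\beta+u^{-1}\gamma$ (note $u^{-1}\in\UKPlus\subseteq\OKPlus$). The first assertion is now immediate: $1\to L$ splits off a $\qf1$; then $\ve$, being indecomposable, is represented by $L$ but not by $\qf1$ (else $\ve$ is a square), so it splits off a further $\qf\ve$, leaving a unary complement $L_u$. For the statement ``$\lambda=\square$ or $\ve\lambda=\square$'' for indecomposable $\lambda$: $\lambda$ is represented by $\qf1$, by $\qf\ve$, or by $L_u$; the first two cases give $\lambda=\square$ and $\ve\lambda=\square$, and in the third, the indecomposable $\ve\lambda$ is also represented by $L$ but cannot be represented by $L_u$ (that would force $\ve\lambda\cdot\lambda=\ve\lambda^2=\square$ by \Cref{Lemma:unary}, hence $\ve=\square$), so $\ve\lambda\to\qf1$ or $\ve\lambda\to\qf\ve$, giving $\ve\lambda=\square$ or $\lambda=\square$.

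\emph{The free case.} If $L$ is free then so is the orthogonal summand $L_u$ (its ideal class is exactly the obstruction to freeness of $L$), hence $L\simeq\qf{1,\ve,\delta}$ for some $\delta\in\OKPlus$. Since $\qf{1,\ve,\delta}$ is universal it represents $\ve\delta$, say $\ve\delta=a^2+\ve b^2+\delta c^2$; rearranging, $\delta(\ve-c^2)=a^2+\ve b^2\succeq0$, so $\ve\succeq c^2$. If $c\neq0$, then $\ve\neq c^2$ ($\ve$ is a nonsquare), so $\ve=c^2+(\ve-c^2)$ would be a decomposition of $\ve$ into two totally positive integers---impossible. Hence $c=0$ and $\ve\delta=a^2+\ve b^2$, so $\delta=b^2+\ve(\ve^{-1}a)^2$ with $\ve^{-1}a\in\OK$; thus $\delta\to\qf{1,\ve}$, whence $\qf{1,\ve,\delta}\to\qf{1,\ve}\perp\qf{1,\ve}=\qf{1,1,\ve,\ve}$, and universality of $\qf{1,\ve,\delta}$ passes to $\qf{1,1,\ve,\ve}$. (Note this step does not use the crux below.)

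\emph{The crux: $\sqrt2\notin K$, and then $2\ve=\square$.} The easy half shows $2=\square$ or $2\ve=\square$: assume both fail. Representing $2$ in $\qf1\perp\qf\ve\perp L_u$ and using that its only decompositions are $2$ and $1+1$ (\Cref{Lemma:2and3}), one is forced into $2\to L_u$ or $1\to L_u$ (all other options are $2=\square$, or $2\ve=\square$, or give $\ve=\square$). Likewise, representing $2\ve$ and using that its only decompositions are $2\ve$ and $\ve+\ve$, one is forced into $2\ve\to L_u$ or $\ve\to L_u$. If $1\to L_u$, then by \Cref{Lemma:unary} every element represented by $L_u$ is a square, killing both alternatives for $2\ve$; so $2\to L_u$, whence every element represented by $L_u$ lies in the square class of $2$, again killing both alternatives for $2\ve$ ($4\ve=\square$, giving $\ve=\square$; or $2\ve=\square$, excluded). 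This contradiction proves $2=\square$ or $2\ve=\square$. The hard half is precisely the assertion $\sqrt2\notin K$, and it is the most delicate point of the paper (Subsection~\ref{Subsec:Signitures}): one shows that, under the standing hypotheses, $\sqrt2\in K$ is impossible. I expect this to be a signature (sign-vector) argument---assuming $\sqrt2\in K$, the elements $2\pm\sqrt2$ are indecomposable of norm $2^{[K:\Q]/2}$ (\Cref{Lemma:smallAreIndecomposable}), and elements built from them and from $\ve$ are made to occupy more square classes than the structure $L\simeq\qf{1,\ve}\perp L_u$ can accommodate given that every indecomposable lies in the square class of $1$ or of $\ve$. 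Granting $\sqrt2\notin K$, the easy half yields $2\ve=\square$; writing $2\ve=w^2$ in $\OK$ gives $(2)=(w)^2$ with $(w)$ a proper ideal (as $\norm{K/\Q}{w}^2=2^{[K:\Q]}\neq1$), so $2$ is ramified (and $[K:\Q]$ is even).

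\emph{The remaining assertions, and the obstacle.} ``$\lambda=\square$ or $2\lambda=\square$'' follows from ``$\lambda=\square$ or $\ve\lambda=\square$'' together with $2\ve=\square$: if $\ve\lambda=\square$ then $2\lambda=(2\ve^{-1})(\ve\lambda)=\square$, using $2\ve^{-1}=2\ve\cdot\ve^{-2}=\square$. For $2\OKPlus\subseteq\sum\square$: write $\gamma\in\OKPlus$ as a finite sum of indecomposables $\gamma=\sum_i\lambda_i$ (possible since $\{\delta\in\OKPlus:\delta\preceq\gamma\}$ is finite); then $2\gamma=\sum_i 2\lambda_i$, and each $2\lambda_i$ is a sum of squares---it equals $\mu^2+\mu^2$ if $\lambda_i=\mu^2$, and is a single square if $2\lambda_i=\square$, which holds otherwise. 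For the norm statement: if $\norm{K/\Q}{\alpha}<2^{[K:\Q]}$ then $\alpha$ is indecomposable (\Cref{Lemma:smallAreIndecomposable}), so one of $\alpha,\ve\alpha$ is a square in $\OK$, whence $(\alpha)$ is the square of an integral ideal of norm $<2^{[K:\Q]/2}$; iterating---at each step the relevant ideal is again the square of a \emph{principal} ideal, by the indecomposables statement and uniqueness of ideal square roots, with generator of strictly smaller norm, while $2\ve=\square$ pins down the prime $2$---forces $\norm{K/\Q}{\alpha}$ to be a power of $2$. The single genuinely hard step is the crux ($\sqrt2\notin K$); all the rest follows quickly from the first two assertions above and the elementary arithmetic just described.
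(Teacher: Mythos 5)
Most of your proposal tracks the paper closely and correctly: parts \ref{Th:MainConditions-Ldiag}--\ref{Th:MainConditions-idecomposables}, the dichotomy ``$2=\square$ or $2\ve=\square$'', and the deductions of \ref{Th:MainConditions-idecomposables2}, \ref{Th:MainConditions-TwoEpsilon} and \ref{Th:MainConditions-SumOfSquares} from it are essentially the paper's arguments. But there is a genuine gap at the crux, part \ref{Th:MainConditions-NoSqrt2}: you explicitly do not prove $\sqrt2\notin K$, offering only a guess (``I expect this to be a signature argument'' involving $2\pm\sqrt2$). The actual route is different and has two stages. First, if $\sqrt2\in K$, representing $3$ and $3\ve$ in $\qf{1,\ve}\perp L_u$ and using \Cref{Lemma:2and3}\ref{Lemma:2and3-3} forces $3=\square$ or $3\ve=\square$; the case $3\ve=\square$ is killed because it would give $3\OKPlus\subset\sum\square$, hence every element of $\OK$ a square mod $2$, which is impossible at a ramified dyadic prime. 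So $\sqrt3\in K$, hence $\sqrt6\in K$. Second, $\sqrt6\in K$ contradicts the norm statement \ref{Th:MainConditions-Norm} via $\norm{K/\Q}{3+\sqrt6}=3^{d/2}<2^d$. Note the logical order is the reverse of yours: \ref{Th:MainConditions-Norm} is proved \emph{first} (under the weaker hypotheses that every indecomposable is $\square$ or $\ve\square$ and that $2=\square$ or $2\ve=\square$) and is the engine that yields \ref{Th:MainConditions-NoSqrt2}.

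Your sketch of \ref{Th:MainConditions-Norm} also misses the one real difficulty in that descent. When $\alpha$ (or $\ve\alpha$) equals $w^2$, the square root $w$ need not be totally positive, and need not even be an associate of a totally positive element; so you cannot simply ``iterate the indecomposables statement'' on $w$. This is exactly where the hypothesis $\abs{\UKPlus/\UKctv}=2$ (not merely $\geq2$) is used: by \Cref{Lemma:signatures}, the unit signatures form an index-$2$ subgroup of $\{\pm1\}^d$, so $\OK\setminus\{0\}$ splits into two signature cosets $M_+$ and $M_-$ with $M_-\cdot M_-\subset M_+$. One first produces, by descending from $2$, an auxiliary element $\beta\in M_-$ of $2$-power norm $\leq 2^{d/2}$, and then uses $\beta$ to push a bad square root from $M_-$ back into $M_+$ without introducing new odd prime factors; only then does the descent on $v_p$ of the norm close. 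Without this correction step your iteration does not go through.
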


Note that part \ref{Th:MainConditions-SumOfSquares} of Theorem \ref{Th:MainConditions} already says that every element of $2\OKPlus$ can be written as the sum of squares;  however, we do not know how many squares are necessary. Hence, this is not enough to prove Theorem \ref{Th:MainKitSqs} in case \ref{Th:MainKitSqs-units}. For that, the most important part of Theorem \ref{Th:MainConditions} is \ref{Th:MainConditions-NoSqrt2}, which provides the crucial implication: \emph{If $\abs{\UKPlus/\UKctv}\geq2$ and there exists a universal ternary quadratic form (or, more generally, lattice) over $K$, then $\sqrt2\notin K$.} In other words, in Theorem \ref{Th:MainKitSqs}, condition \ref{Th:MainKitSqs-units} implies condition \ref{Th:MainKitSqs-sqrt2}.

Until the end of this section, we will use the following assumption:

\vspace{-3mm}
\begin{equation}\label{AssumLat}
\parbox{0.9\textwidth}{%
\begin{mdframed}
\noindent $K$ is a totally real number field with $\abs{\UKPlus/\UKctv}=2$ and with fixed $\ve\in\UKPlus\setminus\UKctv$, and there exists a universal (classical) ternary lattice $L$ over $K$.
\end{mdframed}
}
\tag{{\smaller[2]{\faTree}}}
\end{equation}

The following lemma is a reformulation of Theorem \ref{Th:MainConditions}\ref{Th:MainConditions-Ldiag}, and it follows directly from Lemma \ref{Lemma:UnitsSplit}.

\begin{lemma} \label{Lemma:Ldiag}
Suppose \eqref{AssumLat}. Then $L\simeq\qf{1,\ve}\perp L_u$ for a unary lattice $L_u$.
\end{lemma}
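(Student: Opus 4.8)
The plan is to apply \Cref{Lemma:UnitsSplit} twice, peeling off the two units $1$ and $\ve$ one after the other, and to use indecomposability of units to control where $\ve$ can go.

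First I would observe that since $L$ is universal and $1\in\OKPlus$, we have $1\to L$; as $1$ is a totally positive unit, \Cref{Lemma:UnitsSplit} gives an isometry $L\simeq\qf{1}\perp L'$ for some lattice $L'\subset L$, necessarily of rank $2$. Next, $\ve\in\OKPlus$ must be represented by $L$, hence by $\qf{1}\perp L'$. Units are indecomposable (by \Cref{Lemma:smallAreIndecomposable}, since $\norm{K/\Q}{\ve}=1<2^{[K:\Q]}$), so $\ve\to\qf{1}$ or $\ve\to L'$. The first possibility is excluded by the hypothesis $\ve\notin\UKctv$: indeed $\ve\to\qf{1}$ would mean $\ve=v^2$ for some $v\in\OK$, and then $v$ is itself a unit, so $\ve\in\UKctv$, a contradiction. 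Therefore $\ve\to L'$, and a second application of \Cref{Lemma:UnitsSplit} (to the lattice $L'$) yields $L'\simeq\qf{\ve}\perp L_u$ for a unary lattice $L_u$. Combining the two isometries gives
\[
L\simeq\qf{1}\perp\qf{\ve}\perp L_u=\qf{1,\ve}\perp L_u,
\]
as claimed.

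I do not expect any genuine obstacle here — the statement really is a direct consequence of \Cref{Lemma:UnitsSplit}, as the text announces. The only step that requires a moment's care is ruling out $\ve\to\qf{1}$, i.e.\ making sure that $\ve$ splits off as a \emph{new} orthogonal summand rather than being absorbed into the $\qf{1}$ already peeled off; this is exactly where the assumption $\ve\in\UKPlus\setminus\UKctv$ from \eqref{AssumLat} is used, together with the fact that a square root of a unit in $\OK$ is again a unit.
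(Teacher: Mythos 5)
Your proof is correct and follows exactly the route the paper intends: the paper simply states that the lemma "follows directly from \Cref{Lemma:UnitsSplit}", and your two-step splitting argument (peel off $\qf{1}$, use indecomposability of $\ve$ and the fact that $\ve\notin\UKctv$ to force $\ve\to L'$, then peel off $\qf{\ve}$) is precisely the intended justification. No gaps.
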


We proceed to part \ref{Th:MainConditions-freeL} of Theorem \ref{Th:MainConditions}.

\begin{proposition} \label{Prop:freeL}
Suppose \eqref{AssumLat} and assume that $L$ is free. Then $\qf{1,1,\ve,\ve}$ is universal.
\end{proposition}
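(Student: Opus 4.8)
The plan is to use the structure $L\simeq\qf{1,\ve}\perp L_u$ from \Cref{Lemma:Ldiag}, where $L_u=(\OK\vct{w},Q)$ is unary with $Q(\vct{w})=\delta$ for some $\delta\in\OKPlus$ (here freeness of $L$ guarantees $L_u$ is free, so it is a genuine $\qf\delta$). The key point will be to pin down $\delta$ up to squares, and then to show that replacing the $\qf\delta$ block by $\qf{1,\ve}$ can only enlarge the set of represented elements. First I would argue that $\delta$ must be such that $\qf\delta$ represents some totally positive unit: since $L$ is universal it represents $1$ and $\ve$, and by \Cref{Lemma:UnitsSplit} and indecomposability of units (\Cref{Lemma:smallAreIndecomposable}) these split off; a short case analysis on which orthogonal summand absorbs each unit, together with the fact that $\qf{1,\ve}$ itself represents $1$ and $\ve$, should force $\delta\in\UKPlus$, hence $\delta\in\{1,\ve\}\cdot\UKctv$. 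By rescaling the generator $\vct{w}$ by a unit we may assume $\delta=1$ or $\delta=\ve$; in either case, by \Cref{Lemma:unary}, $\qf\delta$ is represented by $\qf1\perp\qf1$ or by $\qf\ve\perp\qf\ve$ (as $\delta\cdot\delta=\square$), and more to the point $\qf\delta\to\qf{1,1,\ve,\ve}$.

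With that in hand, the representation $\iota:L_u\to\qf{1,1,\ve,\ve}$ extends by the identity on the $\qf{1,\ve}$ block to a representation $L=\qf{1,\ve}\perp L_u\to\qf{1,\ve}\perp\qf{1,1,\ve,\ve}$. Then I would invoke transitivity of $\to$: since $L$ is universal, every $\alpha\in\OKPlus$ satisfies $\alpha\to L\to\qf{1,\ve,1,1,\ve,\ve}$, so $\qf{1,\ve}\perp\qf{1,1,\ve,\ve}$ is universal. It remains to pass from this rank-$6$ form down to $\qf{1,1,\ve,\ve}$. For this the natural tool is again unit-splitting applied inside the universal form: I would take an arbitrary $\alpha\in\OKPlus$ and show $\alpha\to\qf{1,1,\ve,\ve}$. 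The cleanest route is to observe that $\qf{1,\ve}\perp\qf{1,1,\ve,\ve}\simeq\qf{1,1}\perp\qf{1,1,\ve,\ve,\ve}$-type rearrangements don't immediately help, so instead I would argue directly: because $\qf{1,1,\ve,\ve}$ already contains two copies of $\qf1$ and two of $\qf\ve$, and by \Cref{Lemma:unary} $1\cdot\ve\ne\square$ is irrelevant here since we are adding, not identifying — the honest statement is that $\qf{1,\ve}\to\qf{1,1,\ve,\ve}$ is false in general, so one cannot simply collapse.

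Hence the real argument for universality of $\qf{1,1,\ve,\ve}$ should go the other way: I would instead prove directly, from \eqref{AssumLat}, that $\qf{1,1,\ve,\ve}$ is universal, using that $L\simeq\qf{1,\ve}\perp\qf\delta$ with $\delta\in\{1,\ve\}$ up to squares. If $\delta=1$ up to squares, then $L\simeq\qf{1,\ve,1}$ up to isometry of the unary block (replacing $\vct{w}$ by a scalar so that $Q(\vct w)$ becomes an actual square times a unit — here one uses \Cref{Lemma:unary} to see $\qf\delta\simeq\qf1$ as abstract lattices precisely when $\delta=\square$, which may fail, so more carefully $\qf\delta$ and $\qf1$ both embed into $\qf{1,1}$). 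Being careful: the safe claim is $L\to\qf{1,1,\ve,\ve}$, proved by sending the $\qf1$ of $L$ to the first coordinate, the $\qf\ve$ to the third, and $\qf\delta$ into the remaining $\qf{1,\ve}$ — which works iff $\delta\to\qf{1,\ve}$, i.e. iff $\delta$ or $\delta\ve$ is a sum handled by \Cref{Lemma:2and3}. I expect the genuine content, and the main obstacle, to be exactly this: showing $\delta\to\qf{1,1,\ve,\ve}$ for the specific $\delta$ that arises, which should follow because $\delta$ is forced to be a unit (as sketched above) and every totally positive unit is either $\square$ or $\ve\,\square$, so $\delta\to\qf1$ or $\delta\to\qf\ve$ directly. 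Once $\delta\to\qf{1,1,\ve,\ve}$ is established, the extension-by-identity map gives $L\to\qf{1,1,\ve,\ve}$, and transitivity with universality of $L$ finishes the proof.
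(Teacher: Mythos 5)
Your proposal has a genuine gap at its central step. You try to force the third diagonal coefficient $\delta$ (the paper's $\gamma$, with $L\simeq\qf{1,\ve,\gamma}$) to be a totally positive unit, via ``a short case analysis on which orthogonal summand absorbs each unit.'' But no such case analysis exists: the representations of $1$ and $\ve$ are already absorbed by the $\qf{1,\ve}$ block, so universality places no constraint of this kind on $\qf{\gamma}$, and there is no reason for $\gamma$ to be a unit. (Compare $\qf{1,1,2}$ over $\Q(\!\sqrt5)$, where the third coefficient is $2$.) You half-recognize the problem midway through, but then circle back to the unsupported claim ``$\delta$ is forced to be a unit (as sketched above),'' so the argument never closes. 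A smaller confusion: you assert that ``$\qf{1,\ve}\to\qf{1,1,\ve,\ve}$ is false in general,'' but this representation is trivially true (send the two coordinates to the first and third slots); the rank-$6$ detour is simply unnecessary.

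The idea you are missing is to test universality not on units but on the element $\ve\gamma$. Writing $\ve\gamma=x^2+\ve y^2+\gamma z^2$ and using that all summands are totally nonnegative gives $\ve\succeq z^2$; since $\ve$ is an indecomposable nonsquare, this forces $z=0$, hence $\ve\gamma=x^2+\ve y^2$ and so $\gamma=y^2+\ve(\ve^{-1}x)^2\to\qf{1,\ve}$ (note $\ve^{-1}x\in\OK$ because $\ve$ is a unit). Then $\qf{1,\ve,\gamma}\to\qf{1,\ve,1,\ve}$ and universality of $\qf{1,1,\ve,\ve}$ follows by transitivity, exactly as in your final paragraph once the representation $\gamma\to\qf{1,\ve}$ is in hand.
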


\begin{proof}
The assumption that $L$ is free together with Lemma \ref{Lemma:Ldiag} implies that $L\simeq\qf{1,\ve,\gamma}$ for some $\gamma\in\OKPlus$. Consider the representation of $\ve\gamma$ by $L$ as $x^2+\ve y^2 + \gamma z^2$ with $x,y,z\in\OK$. We have $\ve\gamma \succeq \gamma z^2$, giving $\ve \succeq z^2$. Since $\ve$ is indecomposable and nonsquare, we get $z=0$. Thus, $\gamma = \ve(\ve^{-1}x)^2 + y^2$. Therefore, $\qf{\gamma}$ is represented by $\qf{1,\ve}$, thus the universal lattice $\qf{1,\ve,\gamma}$ is represented by $\qf{1,\ve,1,\ve}$.
\end{proof}

Note that it is not obvious how to extend the proof of Proposition \ref{Prop:freeL} to the case of non-free lattices. Instead, we move on to a description of  the indecomposable elements of $\OKPlus$ (part \ref{Th:MainConditions-idecomposables} of Theorem \ref{Th:MainConditions}). 

\begin{lemma} \label{Lemma:EpsilonIndecompSquare}
Suppose \eqref{AssumLat}. Then for each indecomposable $\lambda\in\OKPlus$, we have $\lambda=\square$ or $\ve\lambda=\square$.
\end{lemma}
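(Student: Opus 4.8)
The plan is to combine the orthogonal splitting of $L$ from \Cref{Lemma:Ldiag} with the (standard) observation that multiplication by the totally positive unit $\ve$ sends indecomposables to indecomposables, and then to use \Cref{Lemma:unary} to eliminate the one case that does not resolve immediately.

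First I would record that if $\lambda\in\OKPlus$ is indecomposable, then so is $\ve\lambda$: a decomposition $\ve\lambda=\beta+\gamma$ with $\beta,\gamma\in\OKPlus$ would give $\lambda=\ve^{-1}\beta+\ve^{-1}\gamma$, a decomposition of $\lambda$ since $\ve^{-1}\in\UKPlus$. Next, \Cref{Lemma:Ldiag} gives $L\simeq\qf{1}\perp\qf{\ve}\perp L_u$ with $L_u$ unary, and since $L$ is universal and $\lambda$ is indecomposable, $\lambda$ is represented by one of these three orthogonal summands. If $\lambda\to\qf{1}$, then $\lambda=\square$; if $\lambda\to\qf{\ve}$, then $\ve\lambda=\square$; in both cases we are done, so we may assume $\lambda\to L_u$.

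In that remaining case I would apply the same trichotomy to the indecomposable element $\ve\lambda$: if $\ve\lambda\to\qf{1}$, then $\ve\lambda=\square$, and if $\ve\lambda\to\qf{\ve}$, then $\lambda=\square$, so again we are done unless $\ve\lambda\to L_u$ as well. But then the unary lattice $L_u$ represents both $\lambda$ and $\ve\lambda$, so \Cref{Lemma:unary} forces $\lambda(\ve\lambda)=\ve\lambda^2$ to be a square in $\OK$; as $\OK$ is integrally closed, this means $\ve=\square$, contradicting $\ve\in\UKPlus\setminus\UKctv$. Hence this last case cannot occur, and the lemma follows.

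The only step with any real content is the case $\lambda\to L_u$; everything else is routine bookkeeping with orthogonal sums and indecomposability of units (and of $\ve\lambda$). The point that makes that case work — and essentially the only place where the precise hypothesis $\abs{\UKPlus/\UKctv}=2$, via $\ve\notin\UKctv$, is genuinely used — is the passage to $\ve\lambda$ followed by \Cref{Lemma:unary}, so that is the step I would be most careful to state cleanly.
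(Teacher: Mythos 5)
Your proof is correct and follows essentially the same route as the paper: split $L\simeq\qf{1,\ve}\perp L_u$ via \Cref{Lemma:Ldiag}, use indecomposability of $\lambda$ and $\ve\lambda$ to force each into one of the three unary summands, and invoke \Cref{Lemma:unary} together with $\ve\lambda^2\neq\square$ to exclude the case where both land in $L_u$. The only difference is that you spell out the case analysis that the paper compresses into one sentence.
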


\begin{proof}
Recall that $L\simeq \qf{1,\ve}\perp L_u$ by Lemma \ref{Lemma:Ldiag}. Let $\lambda\in\OKPlus$ be indecomposable; then $\ve\lambda$ is also indecomposable, and $\lambda\to L$ and $\ve\lambda\to L$. Thanks to indecomposability, $\lambda$ and $\ve\lambda$ are each represented by one of the unary lattices $\qf{1}$, $\qf{\ve}$, $L_u$. Since $\ve\lambda^2\neq\square$, $\lambda$ and $\ve\lambda$ cannot be both represented by $L_u$ due to Lemma \ref{Lemma:unary}, and so at least one of them is represented by $\qf{1}$ or $\qf{\ve}$. Each of the four possibilities gives $\lambda = \square$ or $\ve\lambda=\square$.
\end{proof}

The previous lemma can be used to prove a weaker analogy of Proposition \ref{Prop:freeL} without the assumption that the universal ternary lattice is free: If $K$ is as in \eqref{AssumLat}, then the diagonal form $\qf{1, \ldots, 1} \perp \qf{\ve, \ldots,\ve}$ of rank $\cP(\OK) + \cP(\OK) = 2\cP(\OK)$ is universal; here, $\cP(\OK)$ denotes the \emph{Pythagoras number} of $\OK$.

\smallskip

Now we attempt to prove part \ref{Th:MainConditions-TwoEpsilon} of Theorem \ref{Th:MainConditions}; however, we get two options, and only later we will prove that the case $2=\square$ is impossible (which will also prove part \ref{Th:MainConditions-NoSqrt2} of the same theorem). The following lemma is \cite[Thm.~1.2]{KTZ} generalized for not-necessarily-free lattices. 

\begin{lemma} \label{Lemma:TwoOrTwoEpsilon}
Suppose \eqref{AssumLat}. Then $2\ve=\square$ or $2=\square$. In particular, $2$ is ramified in $K$.
\end{lemma}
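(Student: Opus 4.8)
The plan is to study the representation of $2$ itself by the universal lattice $L\simeq\qf{1,\ve}\perp L_u$, and to exploit the extremely rigid decomposition structure of $2$ provided by \Cref{Lemma:2and3}\ref{Lemma:2and3-2}: the only ways to write $2$ as a sum of totally positive integers are $2$ and $1+1$. Writing $2 = Q(\vv)$ for some $\vv = \vv_0 \oplus \vv_u$ with $\vv_0$ in the rank-$2$ part $\qf{1,\ve}$ and $\vv_u\in L_u$, we get $2 = Q_0(\vv_0) + Q_u(\vv_u)$. Each summand is either $0$ or a totally positive integer, so the decomposition lemma forces one of three cases: (i) $\vv_u = 0$ and $2\to\qf{1,\ve}$; (ii) $Q_0(\vv_0)=0$, i.e.\ $\vv_0=0$, and $2\to L_u$; or (iii) both summands equal $1$, so $1\to\qf{1,\ve}$ (automatic) and $1\to L_u$.

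In case (iii), $L_u$ represents the unit $1$, so by \Cref{Lemma:unary} the unary lattice $L_u$ is (isometric to) $\qf{1}$, and then $L\simeq\qf{1,\ve,1}$ is free — but then \Cref{Prop:freeL} applies and more: repeating the argument, $L_u = \qf{1}$ means $L$ represents $\OKPlus$ using only $\qf{1,\ve,1}$; I would then look at the representation of $2\ve$ (which is indecomposable iff \dots) or simply invoke that a free $L$ falls under case (i) after re-diagonalizing, so it suffices to handle (i) and (ii). In case (ii), $L_u$ represents $2$; since $L_u$ is unary, $L_u\simeq\qf{\delta}$ with $2 = \delta w^2$ for some $w\in\OK$, hence $2\delta = (\delta w)^2 = \square$; but $\delta$, being a value of the universal ternary on a basis vector, is itself represented by $L$, and in fact $\delta$ must be indecomposable or decompose — here I would instead argue directly that $2\to\qf{1,\ve}$ can be arranged. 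So the crux is case (i): $2 = x^2 + \ve y^2$ with $x,y\in\OK$. Here $2\succeq \ve y^2$ gives $\ve \succeq$ (something of norm dividing that of $2$); more precisely, if $y\neq 0$ then $\ve y^2 \preceq 2$ forces $\norm{K/\Q}{\ve y^2}=\norm{K/\Q}{y}^2 \le 2^d$, so $\norm{K/\Q}{y}^2 \in\{1\}$ when $d\ge 1$ — wait, $\norm{K/\Q}{y}^2\le 2^d$ only bounds things loosely; instead use that $2 - \ve y^2 = x^2\succeq 0$ and $2$ has only the decompositions $2$ and $1+1$, so $\ve y^2\in\{0,1,2\}$ as an element; $\ve y^2=1$ is impossible as $\ve\notin\UKctv$; $\ve y^2=2$ gives $x=0$ hence $2=\ve y^2$, so $2\ve = (\ve y)^2=\square$; and $\ve y^2 = 0$ gives $2 = x^2 = \square$.

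Thus the three cases collapse to exactly $2\ve=\square$ or $2=\square$ (the case analysis on $L_u$ feeding back into one of these two via \Cref{Lemma:unary} and \Cref{Prop:freeL} or a direct re-examination). Finally, in either conclusion, $2$ is a square times a unit in $K$, so the prime $2$ cannot split into distinct primes with coprime behaviour — concretely, $2\OK$ (or $2\ve\,\OK = 2\OK$) being a square of an ideal shows $2$ is ramified. The main obstacle I anticipate is handling the non-free case cleanly: when $\vv$ has a nonzero component in $L_u$, one must carefully translate "$L_u$ represents $1$ or $2$" into a statement about squares, using \Cref{Lemma:unary} to pass from a unary lattice representing two elements to a square relation, and making sure the logic funnels everything into the two stated alternatives without accidentally producing a spurious third possibility.
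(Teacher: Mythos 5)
Your case (i) is correct and is exactly the computation the paper performs for the sub-case $2\to\qf{1,\ve}$: the rigidity of the decompositions of $2$ forces $2=x^2$ or $2=\ve y^2$, i.e.\ $2=\square$ or $2\ve=\square$. The problem is that cases (ii) and (iii) are not actually resolved, and the devices you invoke to dispatch them do not work. In case (iii), $1\to L_u$ gives $L\simeq\qf{1,\ve,1}$, and this lattice represents $2$ perfectly well as $1+0+1$; so there is no way to ``re-diagonalize into case (i)'', and the representation of $2$ yields no information at all here. In case (ii), $2\to L_u$ only tells you (via \Cref{Lemma:unary}) that $2\delta=\square$ for whatever $\delta$ the unary lattice $L_u$ happens to represent; since $\delta$ has no a priori relation to $1$ or $\ve$, this is not one of the two desired conclusions, and your fallback (``argue directly that $2\to\qf{1,\ve}$ can be arranged'') is unjustified. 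In short, analyzing the representation of $2$ alone cannot close the argument.

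The missing idea --- and the paper's actual mechanism --- is to run the same decomposition analysis a \emph{second} time on $2\ve$, whose only decomposition is $\ve+\ve$. Arguing by contradiction (assume neither $2$ nor $2\ve$ is a square), the representation of $2$ forces $L_u$ to represent $1$ or $2$, and the representation of $2\ve$ forces $L_u$ to represent $\ve$ or $2\ve$ (since $\qf{1}$ cannot represent $\ve$ or $2\ve$, and $\qf{\ve}$ cannot represent $2\ve$ under the standing assumptions). Now \Cref{Lemma:unary} applied to the single unary lattice $L_u$ says the product of any two elements it represents is a square, so one of $1\cdot\ve$, $2\cdot\ve$, $1\cdot2\ve$, $2\cdot2\ve$ is a square; each possibility contradicts either $\ve\notin\UKctv$ or the assumption that $2\ve\neq\square$. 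This is the step that funnels your cases (ii) and (iii) into the two stated alternatives. Your concluding remark on ramification is fine: either alternative makes $2\OK$ the square of a proper ideal.
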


\begin{proof} 
We proceed by contradiction and assume that neither $2$ nor $2\ve$ is a square.

By Lemma \ref{Lemma:Ldiag}, $L\simeq\qf{1,\ve}\perp L_u$. Consider a representation of $2$ by $L$. By Lemma \ref{Lemma:2and3}\ref{Lemma:2and3-2}, the only decomposition of $2$ into elements of $\OKPlus$ is $2=1+1$. Thus, since $2\to L$, there are two possibilities: $2$ is represented by one of the unary forms $\qf{1}$, $\qf{\ve}$, and $L_u$, or $1$ is represented by two of them. In the former case, $2\to\qf{1}$ is equivalent to $2=\square$ and $2\to\qf{\ve}$ to $2\ve=\square$; thus, neither can happen and the only option is $2\to L_u$. In the latter case, of course $1\to\qf{1}$, and $1\not\to\qf{\ve}$ as $\ve$ is not a square; thus, $2$ is represented in this way if and only if $1 \to L_u$. To summarize, $2\to L$ if and only if $1\to L_u$ or $2\to L_u$.

A simple corollary of Lemma \ref{Lemma:2and3}\ref{Lemma:2and3-2} is that the only decomposition of $2\ve$ as the sum of totally positive integers is $2\ve=\ve+\ve$. Thus, by the same arguments as above, we also get that $2\ve \to L$ if and only if $\ve\to L_u$ or $2\ve\to L_u$.

By Lemma \ref{Lemma:unary}, if $\alpha_1 \to L_u$ and $\alpha_2 \to L_u$, then $\alpha_1\alpha_2 = \square$. This means that one of the following products is a square: $1\cdot\ve$, $2\cdot\ve$, $1\cdot2\ve$, $2\cdot2\ve$. This is the desired contradiction ($4\ve=\square$ would yield $\ve=\square$).
\end{proof}

To be able to exploit the condition $2\ve=\square$ from Lemma \ref{Lemma:TwoOrTwoEpsilon}, we prepare the following two results that can be stated slightly more generally.

\begin{lemma} \label{Lemma:TwoIndecompSquare}
Suppose \eqref{AssumLat} and that $a\ve=\square$ for some positive integer $a$. Then for each indecomposable $\lambda\in\OKPlus$, we have $\lambda=\square$ or $a\lambda=\square$.
\end{lemma}

\begin{proof}
Let $\lambda\in\OKPlus$ be indecomposable. By Lemma \ref{Lemma:EpsilonIndecompSquare}, we have $\lambda=\square$ or $\ve\lambda=\square$. Since $a\ve=\square$ by the assumption, the latter case is equivalent to $a\lambda=\square$ (we use that $\OK$ is integrally closed).
\end{proof}

This easily implies the following strong condition.

\begin{lemma} \label{Lemma:TwoEpsSquareSumSquares}
Suppose \eqref{AssumLat} and that $a\ve=\square$ for some positive integer $a$. Then $a\OKPlus \subset \sum\square$.
\end{lemma}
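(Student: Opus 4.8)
The plan is to show that an arbitrary $\gamma \in 2\OKPlus$ lies in $\sum\square$ by expressing $\gamma$ as a sum of totally positive integers that are amenable to the previous lemmas. Write $\gamma = 2\delta$ with $\delta \in \OKPlus$. The key tool is \Cref{Lemma:TwoIndecompSquare}: every indecomposable $\lambda \in \OKPlus$ satisfies $\lambda = \square$ or $2\lambda = \square$. So I would first decompose $\delta$ into a sum of indecomposables, $\delta = \lambda_1 + \cdots + \lambda_k$ with each $\lambda_i \in \OKPlus$ indecomposable (such a decomposition exists: keep splitting any decomposable summand, and this terminates because the partial order $\succ$ on $\OKPlus$ is well-founded on elements below a fixed $\delta$, as $\OKPlus$ intersected with a bounded region is finite). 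Then $\gamma = 2\delta = 2\lambda_1 + \cdots + 2\lambda_k$, so it suffices to show each $2\lambda_i \in \sum\square$.

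For a single indecomposable $\lambda$, \Cref{Lemma:TwoIndecompSquare} gives two cases. If $2\lambda = \square$, then $2\lambda$ is itself a square, hence trivially in $\sum\square$. If instead $\lambda = \square$, say $\lambda = \mu^2$ with $\mu \in \OK$, then I need $2\mu^2 \in \sum\square$, i.e.\ $2 \in \sum\square$ (scaling a sum-of-squares representation of $2$ by $\mu^2$). To see $2 \in \sum\square$: since $2\ve = \square$ by assumption, write $2\ve = \nu^2$; then $2 = 2\ve \cdot \ve^{-1}$, but $\ve^{-1}$ need not be a square, so this is not immediate. Instead, I would argue directly that $2$ is represented by the universal lattice $L \simeq \qf{1,\ve}\perp L_u$ (\Cref{Lemma:Ldiag}). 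As analysed in the proof of \Cref{Lemma:TwoOrTwoEpsilon}, since the only decomposition of $2$ is $1+1$ and $2\neq\square$, $2 \to L$ forces either $2 \to L_u$ or $1 \to L_u$; in the first subcase $L_u$ represents $1$ (because it represents $2$, and... ) — here I should instead observe: $2$ is totally positive of norm $2^d < 2^d$? No — $\norm{K/\Q}{2} = 2^d$, so \Cref{Lemma:smallAreIndecomposable} does \emph{not} apply.

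The cleaner route is: combine $\lambda = \square$ with $2\ve = \square$ differently. We have $2\lambda = 2\mu^2$; and $2\ve = \nu^2$ gives $2 = \nu^2\ve^{-1}$, so $2\lambda = (\nu\mu)^2\ve^{-1}$. Now $\ve^{-1} \in \UKPlus$, and since $\abs{\UKPlus/\UKctv}=2$ with representatives $1,\ve$, either $\ve^{-1}=\square$ (then $2\lambda$ is a square, done) or $\ve^{-1} = \ve\cdot\square$, i.e.\ $\ve^{-2}\in\UKctv$ trivially and $\ve^{-1}\ve^{-1}=\ve^{-2}$ — more usefully, $\ve^{-1} \equiv \ve \pmod{\UKctv}$ means $\ve^2 \equiv 1$, always true, so this says nothing. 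Let me instead note $\ve^{-1} = \ve \cdot (\ve^{-1})^2 \cdot \ve^{?}$... The honest fix: $\ve^{-1}$ is a totally positive unit, hence either a square or $\ve$ times a square; if $\ve^{-1} = \rho^2$ then $2\lambda = (\nu\mu\rho)^2 \in \square$; if $\ve^{-1} = \ve\rho^2$, then $2\lambda = (\nu\mu\rho)^2 \ve$, and since $2\ve = \square$ we also have $2\lambda\ve = (\nu\mu\rho)^2\ve^2 = \square$... so $4\lambda^2\cdot\tfrac{2\ve}{\ve} $ — I will write $2\lambda \cdot 2\ve = (\text{sq})\cdot(\text{sq})$ giving $4\lambda\ve = \square$, useless.

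\textbf{Main obstacle.} The genuinely delicate point is precisely establishing $2 \in \sum\square$ (equivalently, handling the subcase $\lambda=\square$); once that is in hand, everything reduces mechanically to \Cref{Lemma:TwoIndecompSquare} applied termwise to a decomposition of $\delta$ into indecomposables. I expect the intended argument derives $2\in\sum\square$ either from the representation of $2$ by the universal lattice $L\simeq\qf{1,\ve}\perp L_u$ together with $2\ve=\square$ (so that $\qf{1,\ve}$ already represents $2 = 1^2 + 1^2$ outright — indeed $1+1$ is a representation by $\qf{1,\ve}$!), which shows $2 = 1^2+1^2 \in \sum\square$ immediately and trivially. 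That is the resolution: $2$ is literally a sum of two squares of $1\in\OK$. Hence in the subcase $\lambda=\mu^2$ we get $2\lambda = \mu^2+\mu^2 \in\sum\square$, and the lemma follows by summing over the indecomposable decomposition of $\delta$.
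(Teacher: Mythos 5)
Your final argument is correct and is exactly the paper's proof: decompose $\delta$ into indecomposables, apply \Cref{Lemma:TwoIndecompSquare} termwise, and note that $2\lambda$ is either a square or equals $\mu^2+\mu^2$ when $\lambda=\mu^2$. The lengthy detour trying to derive ``$2\in\sum\square$'' from $2\ve=\square$ was unnecessary (that hypothesis enters only through \Cref{Lemma:TwoIndecompSquare}); the trivial identity $2\mu^2=\mu^2+\mu^2$ you end on is all that is needed.
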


\begin{proof}
Observe that by Lemma \ref{Lemma:TwoIndecompSquare}, for every indecomposable $\lambda\in\OKPlus$, we know that $a\lambda=\square$  or $a\lambda=\lambda+\dots+\lambda=\square+\dots+\square$ is the sum of $a$ squares. Every element $\alpha\in\OKPlus$ can be written as the sum of indecomposables, and by multiplying this decomposition by $a$ we get $a\alpha$ expressed as the sum of squares.
\end{proof}

From now until the end of this section, we assume that $\sqrt2 \in K$, and prove a series of statements which culminates in a contradiction, thus proving Theorem \ref{Th:MainConditions}\ref{Th:MainConditions-NoSqrt2}.

Since $\sqrt2\in K$, a natural attempt would be to consider the representation of $2+\sqrt2$ which is indecomposable thanks to its norm. (Every totally positive element of norm $<2^{[K:\Q]}$ is indecomposable by Lemma \ref{Lemma:smallAreIndecomposable}.) Thus, $2+\sqrt2$ or $\ve(2+\sqrt2)$ is a square by Lemma \ref{Lemma:EpsilonIndecompSquare}. But as this leads to a distinction of two subcases, we do not follow this path: we can extract more information by considering the representations of $3$ and $3\ve$. Using Lemma \ref{Lemma:2and3}, we prove that to represent both $3$ and $3\ve$, one of these two numbers must be a square. Later we will get a contradiction in both cases.

\begin{lemma} \label{Lemma:ThreeEpsilonSquare}
Suppose \eqref{AssumLat} and that $\sqrt2\in K$. Then $3=\square$ or $3\ve=\square$.
\end{lemma}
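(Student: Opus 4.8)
The plan is to argue by contradiction: suppose that neither $3$ nor $3\ve$ is a square in $\OK$. Since $\sqrt2\in K$ we have $2=\square$ (recall $\OK$ is integrally closed), and $\ve$ is a nonsquare unit by \eqref{AssumLat}. By \Cref{Lemma:Ldiag} we may fix an orthogonal splitting $L\simeq\qf1\perp\qf\ve\perp L_u$ with $L_u$ unary. I will use throughout that $1$, $2$, and $\bigl(\tfrac{1\pm\sqrt5}{2}\bigr)^2=\tfrac{3\pm\sqrt5}{2}$ are squares in $\OK$, while $\ve$, $3$, and $3\ve$ are not.

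First I would look at a representation of $3$ by $L$. Decomposing the representing vector along the splitting gives $3=v_1^2+\ve v_2^2+Q_{L_u}(v_3)$, so the nonzero summands among $v_1^2$, $\ve v_2^2$, $Q_{L_u}(v_3)$ constitute a decomposition of $3$ into totally positive integers; by \Cref{Lemma:2and3}\ref{Lemma:2and3-3} each of them therefore lies in $S:=\bigl\{1,2,3,\tfrac{3+\sqrt5}{2},\tfrac{3-\sqrt5}{2}\bigr\}$ (the last two entries can occur only if $\sqrt5\in K$). The crucial claim is that $Q_{L_u}(v_3)\neq0$: if it were $0$ then $3=v_1^2+\ve v_2^2$, and then $v_2=0$ would give $3=\square$, $v_1=0$ would give $3\ve=(\ve v_2)^2=\square$, while $v_1,v_2\neq0$ would force $\{v_1^2,\ve v_2^2\}$ to be $\{1,2\}$ or $\{\tfrac{3+\sqrt5}{2},\tfrac{3-\sqrt5}{2}\}$, so that $\ve v_2^2$ is a square and hence $\ve=\square$; each possibility contradicts our assumptions. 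Thus $Q_{L_u}(v_3)$ is a nonzero element of $S$ represented by $L_u$; call it $\mu$.

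Next, the same analysis applied to a representation of $3\ve$ yields a nonzero element $\nu\in\ve S=\bigl\{\ve,2\ve,3\ve,\tfrac{3+\sqrt5}{2}\ve,\tfrac{3-\sqrt5}{2}\ve\bigr\}$ represented by $L_u$; here one uses that the decompositions of $3\ve$ are exactly $\ve$ times those of $3$ (since $\ve$ is a totally positive unit), and that $\ve$, $2\ve$, $\tfrac{3\pm\sqrt5}{2}\ve$ are all $\ve$ times a square and hence nonsquares. Now $L_u$ is unary and represents both $\mu$ and $\nu$, so \Cref{Lemma:unary} forces $\mu\nu=\square$. Writing $\nu=\ve\mu'$ with $\mu'\in S$, this says $\ve\mu\mu'=\square$. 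But every element of $S$ except $3$ is a square, and $3\cdot3=9=\square$, so modulo squares $\mu\mu'$ is either trivial or the class of $3$; hence $\ve\mu\mu'$ can be a square only if $\ve=\square$ or $3\ve=\square$, both of which are excluded. This contradiction proves that $3=\square$ or $3\ve=\square$.

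The only real work is the case-checking compressed into the second and third paragraphs: for each admissible decomposition of $3$ (resp.\ $3\ve$) and each way of assigning its summands to the slots $\qf1$, $\qf\ve$, $L_u$, one must verify that the configuration is either impossible (it forces $\ve$, $3$, or $3\ve$ to be a square) or else places a nonzero element of $S$ (resp.\ $\ve S$) into $L_u$. The hypothesis $\sqrt2\in K$ is what makes this go through: it forces $2$ to be a square, so the summand $2$ cannot sit in the $\qf\ve$-slot (nor $2\ve$ in the $\qf1$-slot) without making $\ve$ a square — which is exactly what keeps the ``$1+2$''-type decomposition from producing an escape route.
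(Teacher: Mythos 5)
Your proof is correct and follows essentially the same route as the paper: split $L\simeq\qf{1,\ve}\perp L_u$, use \Cref{Lemma:2and3} together with $2=\square$ to force $L_u$ to represent a summand from a decomposition of $3$ and one from a decomposition of $3\ve$, then apply \Cref{Lemma:unary} to the two elements represented by the unary lattice $L_u$ and check that every resulting product being a square would make $\ve$ or $3\ve$ a square. The only difference is organizational — the paper phrases the dichotomy as ``$L_u$ represents a square or represents $3$ (resp.\ $\ve\square$ or $3\ve$)'' while you track the full set $S$ of possible summands — but the case analysis is the same.
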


\begin{proof}
Recall that $L\simeq\qf{1,\ve}\perp L_u$ for a unary lattice $L_u$ by Lemma \ref{Lemma:Ldiag}. We consider the representation of $3$ by $L$: we have $3 = x^2+\ve y^2 + \mu$ where $x,y\in\OK$ and $\mu \to L_u$. This is a decomposition of $3$ as the sum of three totally nonnegative integers. According to Lemma \ref{Lemma:2and3}, all possible decompositions are $3$, $1+2$, $1+1+1$, and possibly $\bigl(\frac{1+\sqrt5}{2}\bigr)^2 + \bigl(\frac{1-\sqrt5}{2}\bigr)^2$.

If the decomposition $3=x^2+\ve y^2 + \mu$ corresponds to $3=3$, then it contains only one nonzero summand, i.e., $3$ is represented by one of the three unary forms. If 3 is represented by $x^2$ or $\ve y^2$, then we obtain the conclusion of the lemma that $3=\square$ or $3\ve=\square$. The only remaining case is that $3$ is represented by $L_u$.

Assume now that we have one of the other decompositions of 3. Each of them contains at least two nonzero squares as summands (as 2 is a square in $K$). However, $\ve y^2\neq\square$ (because $\ve\neq\square$), and so $L_u$ must represent a square.

To summarize the previous two paragraphs, if neither $3$ nor $3\ve$ is a square, then $L_u$ represents a square or it represents $3$. Also, if we apply the same argument to the representation of $3\ve$, it similarly turns out that if neither $3$ nor $3\ve$ is a square, then $L_u$ represents an element of the form $\ve\square$ or $L_u$ represents $3\ve$.

This gives four possible combinations of elements which are simultaneously represented by $L_u$; applying Lemma \ref{Lemma:unary}, we see that one of the following products is a square: $\square\cdot\ve\square$, $\square\cdot3\ve$, $3\cdot\ve\square$, $3\cdot3\ve$. 
Since $\ve$ is not a square, we have proved our claim.
\end{proof}

We immediately strengthen the previous lemma by ruling out the latter case.

\begin{lemma} \label{Lemma:SqrtTwoImpliesSqrtThree}
Suppose \eqref{AssumLat} and $\sqrt2\in K$. Then $\sqrt3 \in K$. (That is, $3\ve = \square$ is impossible in Lemma~$\ref{Lemma:ThreeEpsilonSquare}$.)
\end{lemma}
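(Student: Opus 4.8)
By hypothesis $\sqrt2\in K$. By \Cref{Lemma:ThreeEpsilonSquare}, either $3=\square$---in which case $\sqrt3\in K$ and we are done---or $3\ve=\square$; I will rule out the latter. So assume towards a contradiction that $3\ve=\square$. Then $3\neq\square$ (otherwise $\ve=(3\ve)/3$ would be a square) and, since $2=\square$, also $2\ve\neq\square$ (otherwise $\ve=(2\ve)/2$ would be a square). The plan is to extract enough structural information from these square congruences to reach a contradiction.

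First I would test a few small indecomposables. The elements $2\pm\sqrt2$ are totally positive and satisfy $\norm{K/\Q}{2\pm\sqrt2}=2^{[K:\Q]/2}<2^{[K:\Q]}$, so they are indecomposable by \Cref{Lemma:smallAreIndecomposable}. Hence, by \Cref{Lemma:EpsilonIndecompSquare}, each of $2+\sqrt2$ and $2-\sqrt2$ is a square or $\ve$ times a square. Their product $(2+\sqrt2)(2-\sqrt2)=2$ is a square while $2\ve$ is not, so the ``mixed'' assignments are excluded: either \emph{(i)} both $2\pm\sqrt2$ are squares, or \emph{(ii)} both $\ve(2\pm\sqrt2)$ are squares. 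Multiplying through by $3\ve=\square$ and using that $\OK$ is integrally closed, case (i) gives $\sqrt{2+\sqrt2}\in K$ and case (ii) gives $\sqrt{3(2+\sqrt2)}\in K$ (and likewise with $2-\sqrt2$); so in either case $K$ contains a cyclic quartic field $F$ with quadratic subfield $\Q(\sqrt2)$ inside which the rational prime $3$ has a prescribed factorization---inert in case (i) (since $3$ is inert in $\Q(\zeta_{16})^+=\Q(\sqrt{2+\sqrt2})$), and ramified as $(3)=\ip^2$ with residue degree $2$ in case (ii).

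The remaining step, which I expect to be the crux, is to contradict this with $3\ve=\square$. That hypothesis forces $(3)$ to be the square of a principal ideal of $\OK$ (indeed $3\ve=\eta^2$ with $\eta\in\OK$ and $\norm{K/\Q}{\eta}=\pm 3^{[K:\Q]/2}$), equivalently $K(\sqrt3)=K(\sqrt\ve)$ is unramified away from $2$, so every prime of $K$ above $3$ has even ramification index over $3$. Confronting this with the factorization of $3$ inherited from $F$ constrains how $3$ can decompose in $K$; tracking the norms of the small totally positive elements that are then forced to exist (and using \Cref{Lemma:smallAreIndecomposable} again to control which norms may occur) should close the argument. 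An alternative route, perhaps cleaner, avoids this field-theoretic bookkeeping: one shows directly that $L\simeq\qf{1,\ve}\perp L_u$ cannot represent some specific totally positive integer---the element $3+\sqrt2$ is a good candidate, since the absolute-trace bound used in \Cref{Lemma:2and3} shows its only decompositions are $3+\sqrt2$ and $1+(2+\sqrt2)$---by placing these pieces among the unary summands $\qf1$, $\qf\ve$, $L_u$ and applying \Cref{Lemma:unary} to the $L_u$-component, together with $3\ve=\square$ and $2\ve\neq\square$. Either way the delicate point is the same: once $3\ve=\square$, the binary form $\qf{1,\ve}$ already represents a great deal, so the obstruction has to be found with some care.
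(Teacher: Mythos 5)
Your reduction is the same as the paper's: invoke \Cref{Lemma:ThreeEpsilonSquare} and rule out $3\ve=\square$ by contradiction. But the contradiction itself is never derived --- both of your proposed routes end with ``should close the argument'' and ``the obstruction has to be found with some care,'' which is precisely the part that constitutes the proof. Worse, the first route as stated does not obviously close: knowing that $3$ is inert in a quartic subfield $F=\Q(\sqrt{2+\sqrt2})$ of $K$ is perfectly compatible with $(3)$ being the square of a (principal) ideal of $\OK$ --- the prime of $F$ above $3$ can simply ramify in $K/F$ --- so ``confronting'' the two pieces of information yields no contradiction without substantial further input about the degree $[K:F]$ and the splitting behaviour there. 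The second route (representing $3+\sqrt2$ by $\qf{1,\ve}\perp L_u$) runs into a case analysis governed by whether $3+\sqrt2$, $\ve(3+\sqrt2)$, etc.\ are squares, and you give no reason why all branches terminate in a contradiction.

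The paper's actual argument is quite different and worth knowing, because it sidesteps all local/field-theoretic bookkeeping. From $3\ve=\square$ and \Cref{Lemma:EpsilonIndecompSquare} one gets that every indecomposable $\lambda$ satisfies $\lambda=\square$ or $3\lambda=\square$; writing an arbitrary $\alpha\in\OKPlus$ as a sum of indecomposables and multiplying by $3$ (using $3\lambda=\lambda+\lambda+\lambda$ when $\lambda=\square$) gives $3\OKPlus\subset\sum\square$. Reducing modulo $2$ with the identity $x^2+y^2\equiv(x+y)^2\pmod 2$, and noting $3\alpha\equiv\alpha$, every $\alpha\in\OKPlus$ --- hence, after adding even integers, every $\alpha\in\OK$ --- is a square modulo $2$. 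But $\sqrt2\in K$ forces $2$ to ramify, and any $\beta\in\ip\setminus\ip^2$ with $\ip^2\mid 2\OK$ is not a square modulo $\ip^2$, let alone modulo $2$. That is the contradiction. Your observations about $2\pm\sqrt2$ are correct as far as they go, but they are not needed for this argument, and your writeup as it stands has a genuine gap where the conclusion should be.
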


\begin{proof}
Assume that on the contrary, $3\ve = \square$. Then by Lemma \ref{Lemma:TwoEpsSquareSumSquares} (used for $a=3$) we have $3\OKPlus \subset \sum \square$. 

If we reduce $3\OKPlus \subset \sum \square$ modulo $2\OK$ and use the identity $x^2+y^2 \equiv (x+y)^2 \pmod{2\OK}$, we obtain that 
every $\alpha \in \OKPlus$ is a square modulo ${2\OK}$.

As every element $\alpha \in \OK$ can be made totally positive by adding a sufficiently large even rational integer, we further get that in fact, every $\alpha \in \OK$ is a square modulo ${2\OK}$.

Note that $2$ ramifies since $\sqrt2 \in K$, and so let us take a prime ideal $\ip$ such that $\ip^2 \mid 2\OK$. Consider an arbitrary element $\beta \in \ip \setminus \ip^2$; then $\beta$ is not a square modulo $\ip^2$. (Indeed, if $\beta \in \ip$ and $\beta\equiv \gamma^2\pmod{\ip^2}$, then $\gamma^2 \in \ip$ and thus $\gamma\in\ip$, contradicting $\beta\notin\ip^2$.) 
Because $\ip^2 \mid 2\OK$, then $\beta$ is also not a square modulo $2\OK$, which is a contradiction.
\end{proof}

Let us recapitulate the situation: Now we know that if \eqref{AssumLat} and $\sqrt2\in K$, then $K$ must contain the field $\Q(\!\sqrt2,\sqrt3)$. Our next goal is to show that this is impossible. We will crucially exploit the fact that $\abs{\UKPlus / \UKctv}$ is \emph{exactly} $2$.

Let us start with some preparation on signatures of units. 
Observe that when $K$ is a totally real field of degree $d$, then $\U_K \simeq \{\pm1\}\times\Z^{d-1}$ by Dirichlet's unit theorem, and so $\abs{\U_K/\U_K^2} =2^d$. Thanks to the chain of subgroups $\U_K^2\subset\UKPlus\subset \U_K$, we have that $\UKPlus / \UKctv$ is a subgroup of $\U_K/\U_K^2$, and so its size is $\abs{\UKPlus / \UKctv}=2^k$ for some integer $0\leq k\leq d$.

\begin{lemma}\label{Lemma:signatures}
Let $K$ be a totally real number field of degree $d$, and let $0\leq k\leq d$ be the integer such that $\abs{\UKPlus / \UKctv}=2^k$. Then:
 \begin{enumerate}
     \item In the group of all signatures $\{\pm1\}^d$, the signatures which correspond to units form a subgroup of index $2^k$. \label{Lemma:signatures-1}
     \item Assume $k=1$. If we write $\OK \setminus \{0\}$ as a disjoint union of two sets: $M_+=\OKPlus\U_K$ and $M_-=\OK \setminus (M_+\cup\{0\})$, then the following holds: If $\beta_1,\beta_2\in M_-$, then $\beta_1\beta_2\in M_+$. \label{Lemma:signatures-2}
 \end{enumerate}    
\end{lemma}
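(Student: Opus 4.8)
The plan is to run the whole argument through the \emph{signature homomorphism}
\[
\sg\colon K^\times\longrightarrow\{\pm1\}^d,\qquad\alpha\longmapsto\bigl(\sign\sigma_1(\alpha),\dots,\sign\sigma_d(\alpha)\bigr),
\]
a group homomorphism (well defined on $\OK\setminus\{0\}$ since the $\sigma_i$ are embeddings) whose kernel consists exactly of the totally positive elements of $K^\times$.

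For part~\ref{Lemma:signatures-1} I would restrict $\sg$ to $\UK$. Its kernel is precisely $\UKPlus$, so $S:=\sg(\UK)$ is a subgroup of $\{\pm1\}^d$ with $\UK/\UKPlus\cong S$, and it remains only to compute $\abs S$. Since $K$ is totally real, Dirichlet's unit theorem gives $\UK\cong\{\pm1\}\times\Z^{d-1}$; squaring kills the torsion factor and doubles each free coordinate, so $\UK/\UKctv\cong(\Z/2\Z)^d$ has order $2^d$. Feeding this into the tower $\UKctv\subseteq\UKPlus\subseteq\UK$ yields $2^d=\abs{\UK/\UKctv}=\abs{\UK/\UKPlus}\cdot\abs{\UKPlus/\UKctv}=\abs S\cdot2^k$, hence $\abs S=2^{d-k}$ and $[\{\pm1\}^d:S]=2^k$, as claimed.

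For part~\ref{Lemma:signatures-2} the key step is the identification $M_+=\{\beta\in\OK\setminus\{0\}:\sg(\beta)\in S\}$. Indeed, if $\beta=\alpha u$ with $\alpha\in\OKPlus$ and $u\in\UK$, then $\sg(\beta)=\sg(\alpha)\sg(u)=\sg(u)\in S$ because $\alpha$ is totally positive; conversely, given $\beta$ with $\sg(\beta)\in S$, I pick $u\in\UK$ with $\sg(u)=\sg(\beta)$, note that $\beta u^{-1}\in\OK$ is totally positive, and write $\beta=(\beta u^{-1})u\in\OKPlus\UK=M_+$. Hence $M_-$ is exactly the set of nonzero $\beta\in\OK$ with $\sg(\beta)\notin S$. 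Now assume $k=1$; then $S$ has index $2$, so $\{\pm1\}^d/S$ has order $2$, and for $\beta_1,\beta_2\in M_-$ the signatures $\sg(\beta_1),\sg(\beta_2)$ both lie in its unique nontrivial coset, forcing $\sg(\beta_1\beta_2)=\sg(\beta_1)\sg(\beta_2)\in S$; since also $\beta_1\beta_2\neq0$, this means $\beta_1\beta_2\in M_+$.

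I do not expect any genuine obstacle here: once the signature homomorphism is in place, both parts are elementary. The two points that need a little care are the order count $\abs{\UK/\UKctv}=2^d$ in part~\ref{Lemma:signatures-1} --- which is exactly where total reality enters, pinning down the torsion of $\UK$ --- and verifying that the description of $M_+$ via signatures is correct in part~\ref{Lemma:signatures-2}, after which the index-$2$ argument is immediate.
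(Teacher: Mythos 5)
Your proof is correct and follows essentially the same route as the paper: the signature homomorphism, Dirichlet's unit theorem to get $\abs{\UK/\UKctv}=2^d$, the index computation for part \ref{Lemma:signatures-1}, and the coset argument for part \ref{Lemma:signatures-2}. The only difference is that you explicitly verify that $M_+$ coincides with the set of elements whose signature lies in $\sg(\UK)$, a point the paper leaves as ``clear''; this is a welcome bit of extra care, not a divergence.
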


\begin{proof}
The result is quite straightforward and folklore, but let us include its proof for completeness. First, we have to formalize part \ref{Lemma:signatures-1}. 
Let 
\[\begin{array}{cccc}
\sg:& K\setminus\{0\}& \longrightarrow& \{\pm1\}^d\\
& \alpha & \mapsto & (\sign\sigma_1(\alpha), \dots, \sign\sigma_d(\alpha))
\end{array}\]
be the group homomorphism which sends an element to its signature. 

For the restriction $\sg: \U_K \to \{\pm1\}^d$, we have $\ker \sg = \UPlus_K$. Thus:
\[
\sg(\U_K) \simeq \quot{\U_K}{\ker\sg} = \quot{\U_K}{\UPlus_K} \simeq \quot{\left( \U_K/\U_K^2 \right)}{\left( \UPlus_K/\U_K^2 \right)}.
\]
Therefore
\[\abs{\sg(\U_K)}=\abs{{\U_K}/{\UPlus_K}}=\frac{\abs{\U_K/\U_K^2}}{\abs{\UPlus_K/\U_K^2}}=\frac{2^d}{2^k}=2^{d-k},\]
and so $\sg(\U_K)$ is indeed a subgroup of $\{\pm1\}^d$ of index $2^k$.

Part \ref{Lemma:signatures-2} follows from \ref{Lemma:signatures-1}: if $k=1$, then the group $\{\pm1\}^d$ decomposes into exactly two cosets modulo $\sg(\U_K)$. We can thus let
\[
M_+ = \{\alpha \in \OK\setminus\{0\} \mid \sg(\alpha) \in \sg(\U_K)\}, \quad
M_- = \{\alpha \in \OK\setminus\{0\} \mid \sg(\alpha) \not\in \sg(\U_K)\},
\]
and the rest is clear.
\end{proof}

We use Lemma \ref{Lemma:signatures} to show that, in our situation, the norm of all small elements is a power of two. This will help us to rule out some fields.

\begin{proposition} \label{Prop:descentGeneralised}
Let $K$ be a totally real number field of degree $d$ satisfying
\begin{itemize}
    \item $\abs{\UKPlus / \UKctv} = 2$ with $\ve\in\UPlus_K\setminus\UKctv$; and
    \item for each indecomposable element $\lambda\in\OKPlus$, it holds that $\lambda=\square$ or $\ve\lambda=\square$; and
    \item $2=\square$ or $2\ve=\square$.
\end{itemize}
Then the following hold:
 \begin{enumerate}
     \item For every $\alpha \in \OKPlus$ with $\norm{K/\Q}{\alpha}<2^d$, $\norm{K/\Q}{\alpha}$ is a power of $2$. \label{Prop:descentGeneralised-norm}
     \item $\sqrt6\notin K$. \label{Prop:descentGeneralised-six}
 \end{enumerate}
\end{proposition}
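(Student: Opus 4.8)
The plan is to prove part (a) by a minimal‑counterexample descent and then read off part (b) immediately. For (a), I would suppose towards a contradiction that some $\alpha\in\OKPlus$ with $N:=\norm{K/\Q}{\alpha}<2^{d}$ has $N$ not a power of $2$, and choose such an $\alpha$ for which the odd part $m$ of $N$ is minimal, so $m\geq 3$. By \Cref{Lemma:smallAreIndecomposable} the element $\alpha$ is indecomposable, so the second hypothesis of the proposition gives $\alpha=\square$ or $\ve\alpha=\square$; since $\norm{K/\Q}{\ve}=1$, in either case $\alpha=\ve^{s}\beta^{2}$ with $s\in\{0,1\}$ and $0\neq\beta\in\OK$ satisfying $\norm{K/\Q}{\beta}^{2}=N$. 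Hence $N$, and therefore $m$, is a perfect square, say $m=m_{0}^{2}$ with $m_{0}\geq 3$ odd. The third hypothesis furnishes $\delta\in\OK$ with $\delta^{2}\in\{2,2\ve\}$; then $\delta^{2}\OK=2\OK$, the degree $d$ is even, and $\abs{\norm{K/\Q}{\delta}}=2^{d/2}$. Finally, as $\abs{\UKPlus/\UKctv}=2$, \Cref{Lemma:signatures} writes $\OK\setminus\{0\}$ as the disjoint union of $M_{+}=\OKPlus\UK$ and $M_{-}$, with $M_{-}M_{-}\subseteq M_{+}$ and $M_{+}M_{-}\subseteq M_{-}$.

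Two cases are easy. If $\beta\in M_{+}$, write $\beta=\gamma u$ with $\gamma\in\OKPlus$, $u\in\UK$; then $\gamma$ is a totally positive element of norm $\sqrt N<2^{d}$ whose odd part is $m_{0}<m$, contradicting minimality. If $\beta\in M_{-}$ and $\delta\in M_{-}$, then $\delta\beta\in M_{+}$, and writing $\delta\beta=\gamma u$ gives $\gamma\in\OKPlus$ of norm $2^{d/2}\sqrt N<2^{d}$ (using $N<2^{d}$) and again odd part $m_{0}$ — the same contradiction.

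The hard part will be the remaining case $\beta\in M_{-}$, $\delta\in M_{+}$; this is where I expect the real obstacle. Here I would iterate a ``square‑root'' step. Set $\mathfrak{c}_{1}:=\delta\OK$, so $\mathfrak{c}_{1}^{2}=2\OK$. Given a principal ideal $\mathfrak{c}_{j}=\pi\OK$ with $\mathfrak{c}_{j}^{2^{j}}=2\OK$ (so $\abs{\norm{K/\Q}{\pi}}=2^{d/2^{j}}<2^{d}$), two subcases occur. If $\pi\in M_{-}$, then $\pi\beta\in M_{+}$, and after correcting by a unit it yields $\gamma\in\OKPlus$ of norm $2^{d/2^{j}}\sqrt N<2^{d}$ with odd part $m_{0}<m$ — contradiction. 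If $\pi\in M_{+}$, a totally positive associate $\pi_{0}$ of $\pi$ has norm $2^{d/2^{j}}<2^{d}$, hence is indecomposable, so by the second hypothesis $\pi_{0}=\square$ or $\ve\pi_{0}=\square$; in particular $2^{d/2^{j}}$ is a perfect square, and we get $\pi'\in\OK$ with $(\pi'\OK)^{2}=\pi_{0}\OK=\mathfrak{c}_{j}$, so $\mathfrak{c}_{j+1}:=\pi'\OK$ satisfies $\mathfrak{c}_{j+1}^{2^{j+1}}=2\OK$ and the process continues. The $M_{+}$‑subcase cannot recur for every $j$, as it would force $v_{\ip}(2\OK)$ to be divisible by $2^{j}$ for all $j$, for any prime $\ip\mid 2$; so at some stage $\pi\in M_{-}$ and we reach a contradiction. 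This rules out $\beta\in M_{-}$ and completes the induction, proving (a).

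Part (b) then follows at once: if $\sqrt6\in K$, then $\Q(\!\sqrt6)\subseteq K$, so $d$ is even and $3+\sqrt6\in\OKPlus$ (because $0<3-\sqrt6$); moreover $\norm{K/\Q}{3+\sqrt6}=\bigl(\norm{\Q(\!\sqrt6)/\Q}{3+\sqrt6}\bigr)^{d/2}=3^{d/2}<4^{d/2}=2^{d}$. By part (a), $3^{d/2}$ would have to be a power of $2$, which is absurd. Hence $\sqrt6\notin K$.
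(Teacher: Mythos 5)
Your proof is correct and follows essentially the same strategy as the paper's: a descent on the odd part of the norm (the paper instead minimizes $v_p$ of the norm for a fixed odd prime $p$), using the signature classes $M_\pm$ together with repeated ``square roots'' of indecomposables, where your iterated square-rooting of $2$ in the last case supplies exactly the element of $M_-$ of power-of-two norm that the paper constructs as a preliminary step. The reorganization is cosmetic; all cases are covered and both termination arguments are sound.
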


\begin{proof}
First note that \ref{Prop:descentGeneralised-six} follows from \ref{Prop:descentGeneralised-norm}: if $\sqrt6\in K$, then $3+\sqrt6\in K$ is totally positive and $\norm{K/\Q}{3+\sqrt6}=3^{d/2}<2^d$.

For part \ref{Prop:descentGeneralised-norm}, we denote $A=\{2\}\cup\{\lambda\in\OKPlus \mid \lambda \text{ is indecomposable}\}$. By the assumptions on $K$,  for every $\alpha\in A$, we have either $\alpha=\square$, or $\ve\alpha=\square$ (but not both, as $\ve\neq\square$). Hence, we can define:
\[\begin{array}{cccc}
T:& A & \longrightarrow & \OK\\
&\alpha & \mapsto & \begin{cases}
                        \sqrt{\alpha} & \text{if } \alpha=\square,\\
                        \sqrt{\ve\alpha} & \text{if } \ve\alpha=\square.
                    \end{cases}
\end{array}\]
Note that $\abs{\norm{K/\Q}{T(\alpha)}}=\sqrt{\norm{K/\Q}{\alpha}}$.

We will keep the notation of Lemma \ref{Lemma:signatures}: we denote $M_+=\OKPlus\U_K$ and $M_-=\OK \setminus (M_+\cup\{0\})$. Furthermore, for each $\alpha\in M_+$, we fix some element  of $\eta_\alpha\in \UK$ that satisfies $\eta_\alpha\alpha\in\OKPlus$.

Now, as a preliminary step, we will show that there exists an element $\beta \in M_-$ with $\abs{\norm{K/\Q}{\beta}}=2^j \leq 2^{d/2}$. We start by taking $\alpha_0 = 2$. Clearly, $\alpha_0 \in A$ and we can apply $T$. Now, if $T(\alpha_0)\in M_-$, we can put $\beta = T(\alpha_0)$. Otherwise, we put $\alpha_1= \eta_{T(\alpha_0)}T(\alpha_0)$; we have $\alpha_1\in\OKPlus$ and it is indecomposable by Lemma \ref{Lemma:smallAreIndecomposable}; hence, $\alpha_1\in A$. We apply the same argument with $\alpha_0$ replaced by $\alpha_1$. Since the norms are integers and they decrease in each step, eventually $T(\alpha_n)$ must lie in $M_-$, and we set $\beta=T(\alpha_n)$.

For a contradiction, assume that there exists $\alpha\in \OKPlus$ with $\norm{K/\Q}{\alpha}<2^d$, the norm of which is not a power of $2$. Then there is an odd prime $p$ such that $v_p\bigl(\norm{K/\Q}{\alpha}\bigr)>0$. Fix one such $p$ and consider the set
\[
S = \{\delta \in \OKPlus \mid \norm{K/\Q}{\delta}<2^d \text{ and } v_p\bigl(\norm{K/\Q}{\delta}\bigr)>0\}.
\]
It is nonempty, as $\alpha\in S$. Note that all elements of $S$ are indecomposable by Lemma \ref{Lemma:smallAreIndecomposable}. Now consider some $\gamma\in S$ with minimal $v_p\bigl(\norm{K/\Q}{\gamma}\bigr)$, and denote $\tau = T(\gamma)$.

If $\tau \in M_+$, then $\norm{K/\Q}{\eta_\tau\tau}= \sqrt{\norm{K/\Q}{\gamma}}$, so $\eta_\tau \tau \in S$ and 
\[v_p\bigl(\norm{K/\Q}{\eta_\tau \tau}\bigr) = \frac12 v_p\bigl(\norm{K/\Q}{\gamma}\bigr)<v_p\bigl(\norm{K/\Q}{\gamma}\bigr),\] 
which contradicts the choice of $\gamma$.

If $\tau \in M_-$, then $\beta\tau \in M_+$ by Lemma \ref{Lemma:signatures}. Set $\gamma_0 = \eta_{(\beta\tau)}\beta\tau$. Then $\gamma_0\in\OKPlus$, and
\[
\norm{K/\Q}{\gamma_0} = \abs{\norm{K/\Q}{\beta}\norm{K/\Q}{\tau}} \leq 2^{d/2} \textstyle{\sqrt{\norm{K/\Q}{\gamma}}} < 2^{d/2}\sqrt{2^d} = 2^d.
\]
Furthermore,
\[
v_p\bigl(\norm{K/\Q}{\gamma_0}\bigr)=v_p\bigl(\pm1\cdot\norm{K/\Q}{\beta}\norm{K/\Q}{\tau}\bigr)=v_p\bigl(\norm{K/\Q}{\tau}\bigr)=\frac12 v_p\bigl(\norm{K/\Q}{\gamma}\bigr),
\]
so $\gamma_0\in S$ and $v_p\bigl(\norm{K/\Q}{\gamma_0}\bigr)<v_p\bigl(\norm{K/\Q}{\gamma}\bigr)$, which again contradicts the choice of $\gamma$.
\end{proof}

Note that the conditions of Proposition \ref{Prop:descentGeneralised} are fulfilled by every field that is as in \eqref{AssumLat}, thanks to Lemma \ref{Lemma:EpsilonIndecompSquare} and Lemma \ref{Lemma:TwoOrTwoEpsilon}. We also know that if such a field contains $\sqrt2$, then it also contains $\sqrt3$ by Lemma \ref{Lemma:SqrtTwoImpliesSqrtThree}, and hence also $\sqrt6$. As the latter is impossible by the previous proposition, we obtain the following corollary.

\begin{corollary} \label{Cor:noSqrtTwo}
Let $K$ be as in \eqref{AssumLat}. Then $\sqrt2\notin K$.
\end{corollary}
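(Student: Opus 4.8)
The plan is to argue by contradiction, chaining together the results already proved in this section. Suppose $\sqrt2\in K$. Working under \eqref{AssumLat}, the aim is to produce two incompatible conclusions about whether $\sqrt6\in K$.

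First I would invoke \Cref{Lemma:SqrtTwoImpliesSqrtThree}: under \eqref{AssumLat} together with $\sqrt2\in K$, it gives $\sqrt3\in K$. Hence $\sqrt6=\sqrt2\cdot\sqrt3\in K$ as well, so in particular $K$ contains the subfield $\Q(\!\sqrt2,\sqrt3)$. On the other hand, I would verify that every field satisfying \eqref{AssumLat} meets the three bullet hypotheses of \Cref{Prop:descentGeneralised}: the first ($\abs{\UKPlus/\UKctv}=2$ with a chosen $\ve$) is part of \eqref{AssumLat}; the second (each indecomposable $\lambda\in\OKPlus$ satisfies $\lambda=\square$ or $\ve\lambda=\square$) is exactly \Cref{Lemma:EpsilonIndecompSquare}; and the third ($2=\square$ or $2\ve=\square$) is \Cref{Lemma:TwoOrTwoEpsilon}. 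Therefore part~\ref{Prop:descentGeneralised-six} of \Cref{Prop:descentGeneralised} applies and yields $\sqrt6\notin K$, contradicting the previous conclusion. Hence $\sqrt2\notin K$.

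There is no real obstacle here: the corollary is a bookkeeping consequence of the preceding lemmas, and the only point requiring a moment's care is checking that the hypotheses of \Cref{Prop:descentGeneralised} transfer cleanly from \eqref{AssumLat} — which they do, via \Cref{Lemma:EpsilonIndecompSquare} and \Cref{Lemma:TwoOrTwoEpsilon}. The substantive work was carried out earlier, notably in the norm-descent argument of \Cref{Prop:descentGeneralised} and in the step $\sqrt2\in K\Rightarrow\sqrt3\in K$ of \Cref{Lemma:SqrtTwoImpliesSqrtThree}.
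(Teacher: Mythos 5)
Your argument is correct and is exactly the paper's own proof: combine \Cref{Lemma:SqrtTwoImpliesSqrtThree} (so $\sqrt2\in K$ forces $\sqrt3$, hence $\sqrt6$, into $K$) with \Cref{Prop:descentGeneralised}\ref{Prop:descentGeneralised-six} (whose hypotheses hold under \eqref{AssumLat} by \Cref{Lemma:EpsilonIndecompSquare} and \Cref{Lemma:TwoOrTwoEpsilon}) to reach the contradiction $\sqrt6\notin K$. Nothing to add.
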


In other words, fields with at least one nonsquare unit that contain $\sqrt2$ do not admit a universal ternary lattice. However, $\sqrt2$ is not the only element with such power.

\begin{corollary}
Let $E\supset K$ be totally real number fields such that  $\abs{\UKPlus/\UKctv}=2$. Suppose that there exists $\beta\in\OKPlus$ such that $\norm{K/\Q}{\beta}<2^{[K:\Q]}$ and $\norm{K/\Q}{\beta}$ is not a power of $2$. 
Then there does not exist a universal ternary lattice over $E$. 
\end{corollary}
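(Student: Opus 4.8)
The plan is to argue by contradiction and reduce the statement to \Cref{Prop:descentGeneralised} applied to the field $E$. Suppose that, contrary to the claim, there is a universal ternary lattice $L$ over $E$. By \Cref{Prop:TooManyUnits} we then have $\abs{\UKPlus[E]/\UKctv[E]}\le 2$; since by hypothesis there is a nonsquare totally positive unit, in fact $\abs{\UKPlus[E]/\UKctv[E]}=2$, so $E$ together with $L$ satisfies the standing assumption \eqref{AssumLat} (with $E$ in the role of $K$). By \Cref{Lemma:EpsilonIndecompSquare} and \Cref{Lemma:TwoOrTwoEpsilon} -- this is exactly the remark following \Cref{Prop:descentGeneralised} -- the field $E$ then meets all the hypotheses of \Cref{Prop:descentGeneralised}. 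In particular, part~\ref{Prop:descentGeneralised-norm} of that proposition, applied to $E$, tells us that \emph{every $\gamma\in\OKPlus[E]$ with $\norm{E/\Q}{\gamma}<2^{[E:\Q]}$ has $\norm{E/\Q}{\gamma}$ equal to a power of $2$.}

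It then remains to push the ``bad'' element $\beta$ from $K$ up to $E$ and observe that it violates this. Since $E$ is totally real, every real embedding of $E$ restricts to a real embedding of $K$, so $\beta\in\OKPlus$ stays totally positive in $E$, i.e.\ $\beta\in\OKPlus[E]$; and of course $\beta$ is still an algebraic integer. By transitivity of the norm together with $\beta\in K$, we have $\norm{E/\Q}{\beta}=\norm{K/\Q}{\norm{E/K}{\beta}}=\norm{K/\Q}{\beta^{[E:K]}}=\norm{K/\Q}{\beta}^{[E:K]}$. Hence $\norm{E/\Q}{\beta}=\norm{K/\Q}{\beta}^{[E:K]}<\bigl(2^{[K:\Q]}\bigr)^{[E:K]}=2^{[E:\Q]}$, while $\norm{E/\Q}{\beta}$ is still not a power of $2$, because any odd prime dividing $\norm{K/\Q}{\beta}$ also divides $\norm{K/\Q}{\beta}^{[E:K]}$. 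Taking $\gamma=\beta$ contradicts the emphasised consequence of \Cref{Prop:descentGeneralised}, which completes the argument.

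I do not foresee a genuine obstacle: the corollary is in essence a repackaging of part~\ref{Prop:descentGeneralised-norm} of \Cref{Prop:descentGeneralised}, with the obstructing element supplied from a subfield, where a small norm $\norm{K/\Q}{\beta}<2^{[K:\Q]}$ is both cheaper to exhibit and automatically blows up only to the still admissible bound $\norm{K/\Q}{\beta}^{[E:K]}<2^{[E:\Q]}$ over $E$. The sole point calling for a little care is the passage to \eqref{AssumLat}: one must invoke the hypothesis on units to land in the case $\abs{\UKPlus[E]/\UKctv[E]}=2$, since the complementary case $\abs{\UKPlus[E]/\UKctv[E]}=1$ falls outside the reach of \Cref{Prop:descentGeneralised} (indeed of the present paper).
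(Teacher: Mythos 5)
Your overall strategy matches the paper's: assume a universal ternary lattice over $E$, use \Cref{Prop:TooManyUnits} to land in the case $\abs{\UKPlus[E]/\UKctv[E]}=2$, verify that $E$ then satisfies the hypotheses of \Cref{Prop:descentGeneralised} (via \Cref{Lemma:EpsilonIndecompSquare} and \Cref{Lemma:TwoOrTwoEpsilon}), and contradict part~\ref{Prop:descentGeneralised-norm} by pushing $\beta$ up to $E$. Your norm computation $\norm{E/\Q}{\beta}=\norm{K/\Q}{\beta}^{[E:K]}<2^{[E:\Q]}$, with the norm still divisible by an odd prime, is exactly right, and the paper's choice to first state the argument for $K$ and then repeat it for $E$ is only a cosmetic difference from your direct application to $E$.

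There is, however, one genuine gap, and it sits at precisely the step the paper flags as ``the main observation here.'' You write that ``since by hypothesis there is a nonsquare totally positive unit, in fact $\abs{\UKPlus[E]/\UKctv[E]}=2$.'' The hypothesis supplies a nonsquare totally positive unit of $K$, not of $E$, and such a unit can become a square in $\UK[E]$: if $\ve\in\UKPlus[K]\setminus\UKctv[K]$, then $K(\sqrt{\ve})$ is a genuine quadratic extension of $K$ that may well be contained in $E$. For instance, $\ve=2+\sqrt3\in\Q(\!\sqrt3)$ is a nonsquare totally positive unit, yet in $E=\Q(\!\sqrt2,\sqrt3)$ it equals the square of the unit $(\sqrt2+\sqrt6)/2$. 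So the lower bound $\abs{\UKPlus[E]/\UKctv[E]}\geq 2$ does not follow from the survival of that one unit; what is true --- and what the paper invokes with a citation to \cite[Rem.~1]{DDK} --- is the nontrivial monotonicity $\abs{\UKPlus[E]/\UKctv[E]}\geq\abs{\UKPlus[K]/\UKctv[K]}$. You need to cite or prove this fact; once it is in place, the rest of your argument goes through and coincides with the paper's proof.
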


\begin{proof}
We first prove the claim for the field $K$. Assume that there exists a universal ternary lattice over $K$; then $K$ is as in \eqref{AssumLat}, and hence, as explained before, it satisfies the conditions of Proposition \ref{Prop:descentGeneralised}. That contradicts the existence of $\beta$.

Now let $E/K$ be a field extension. The main observation here is the well-known fact that $\abs{\UKPlus[E]/\UKctv[E]}\geq \abs{\UKPlus[K]/\UKctv[K]}$ (see, e.g., \cite[Rem.~1]{DDK}). If $\abs{\UKPlus[E]/\UKctv[E]}>2$, then the statement follows from Proposition \ref{Prop:TooManyUnits}. If $\abs{\UKPlus[E]/\UKctv[E]}=2$, then we can proceed as in the previous paragraph, as $\norm{E/\Q}{\beta}<2^{[E:\Q]}$ and $\norm{E/\Q}{\beta}$ is still not a power of $2$.
\end{proof}

In particular, we see that no totally real number field containing $\Q(\!\sqrt6)$ or $\Q(\!\sqrt{33})$ admits a universal ternary lattice,  as these quadratic fields contain a nonsquare totally positive unit as well as an element of norm $3<2^2$.

Finally, we have collected all the necessary ingredients for the main theorem of this section.

\begin{proof}[Proof of Theorem $\ref{Th:MainConditions}$]
The assumptions ensure that $K$ satisfies \eqref{AssumLat}. Hence: part \ref{Th:MainConditions-Ldiag} follows from Lemma \ref{Lemma:Ldiag}, part \ref{Th:MainConditions-freeL} was proven in Proposition \ref{Prop:freeL}, and part \ref{Th:MainConditions-idecomposables} is the content of Lemma \ref{Lemma:EpsilonIndecompSquare}.

Furthermore, we have $2=\square$ or $2\ve=\square$ by Lemma \ref{Lemma:TwoOrTwoEpsilon}; however, $2=\square$ is impossible by Corollary \ref{Cor:noSqrtTwo}. This proves parts \ref{Th:MainConditions-TwoEpsilon} and \ref{Th:MainConditions-NoSqrt2}. Moreover, we can now use Lemma \ref{Lemma:TwoIndecompSquare} to prove part \ref{Th:MainConditions-idecomposables2} and Lemma \ref{Lemma:TwoEpsSquareSumSquares} for part \ref{Th:MainConditions-SumOfSquares} (in both cases, taking $a=2$). 

Finally, note that all the conditions of Proposition \ref{Prop:descentGeneralised} are fulfilled, and so part \ref{Th:MainConditions-Norm} follows.
\end{proof}

\section{Fields without \texorpdfstring{$\sqrt 2$}{sqrt2}}\label{Sec:KitaokaSquares}

In this section, we work with free lattices, i.e., with quadratic forms. Recall that all quadratic forms are assumed to  be classical and totally positive definite. Furthermore, we impose the following:

\vspace{-3mm}
\begin{equation}\label{AssumForm}
\parbox{0.9\textwidth}{%
\begin{mdframed}
\noindent $K$ is a totally real number field such that $\sqrt2\notin K$, and there exists a universal (classical) ternary quadratic form $Q$ over $K$.
\end{mdframed}
}
\tag{{\smaller[2]{\faStar}}}
\end{equation}
\smallskip

The main goal of this section is to prove Theorem \ref{Th:MainKitSqs} under assumption \ref{Th:MainKitSqs-sqrt2} (which is the same as \eqref{AssumForm}). However, instead of proving that every element of $2\OKPlus$ is represented by the sum of four squares, we will show that it is represented by the form $\qf{1,1,2,2}$. As $\qf{2,2}\to\qf{1,1}$ (this representation follows from the identity $2x^2+2y^2=(x+y)^2+(x-y)^2$), the claim of Theorem \ref{Th:MainKitSqs} follows.

\begin{theorem} \label{Th:KitaokaSquares}
Suppose \eqref{AssumForm}. Then every element of $2\OKPlus$ is represented by $\qf{1,1,2,2}$.
\end{theorem}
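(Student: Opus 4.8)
The plan is to prove the (a priori stronger) statement that the classical ternary lattice $2Q$ --- the $\OK$-module underlying $Q$, equipped with the quadratic map $\vv\mapsto2Q(\vv)$ --- is represented by $\qf{1,1,2,2}$; since $Q$ is universal, $2Q$ then represents every element of $2\OKPlus$, which is the assertion. As $Q$ represents the unit $1$, \Cref{Lemma:UnitsSplit} gives an orthogonal splitting $Q\simeq\qf1\perp Q'$ with $Q'$ a binary (free, classical, positive definite) form, so $2Q\simeq\qf2\perp2Q'$; since $\qf2\to\qf{2,2}$ trivially, it suffices to prove $2Q'\to\qf{1,1,2}$.

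To identify $Q'$, consider a representation $2=x^2+Q'(\vv)$. If $x\neq0$ then $2\succeq x^2$, so $x^2$ and $2-x^2$ are totally nonnegative with sum $2$; by \Cref{Lemma:2and3}\ref{Lemma:2and3-2} the only decomposition of $2$ is $1+1$, and $\sqrt2\notin K$ excludes $x^2=2$, forcing $x^2=1$. Thus $Q'$ represents $1$ or $2$. If $Q'$ represents $1$, then $Q'\simeq\qf{1,\gamma}$ and $Q\simeq\qf{1,1,\gamma}$, and the key point is that $2\gamma$ is a sum of two squares: examining a representation $2\gamma=x^2+y^2+\gamma z^2$, total positivity forces $2\succeq z^2$, so again $z=0$ or $z^2=1$, whence $2\gamma=x^2+y^2$, or $\gamma=x^2+y^2$ and so $2\gamma=(x+y)^2+(x-y)^2$; in either case $2\gamma=p^2+q^2$. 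Then $2Q'\simeq\qf{2,2\gamma}\to\qf{1,1,2}$ via $(x,y)\mapsto(py,qy,x)$, and this case is finished.

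The remaining, and harder, case is that $Q'$ represents $2$ but not $1$. Here I would take a vector $\vv_0$ with $Q'(\vv_0)=2$; this $\vv_0$ may be taken primitive --- a non-primitive one, $\vv_0=k\vv_1$ with $k$ a non-unit, forces $Q'$ to represent $\mu=2/k^2$, which is nonsquare (otherwise $2$ would be a square) and indecomposable (by \Cref{Lemma:smallAreIndecomposable}, since $\norm{K/\Q}{\mu}<2^{[K:\Q]}$) with $2\mu=(2/k)^2=\square$, a sub-case handled analogously (in particular, when a nonsquare unit $\ve$ then splits off by \Cref{Lemma:UnitsSplit} one gets $Q'\simeq\qf{\ve,\gamma}$ with $2\ve=\square$, so $2Q'\simeq\qf{2\ve,2\gamma}\to\qf{1,2\gamma}$). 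Extending a primitive $\vv_0$ to a basis $\vv_0,\vct{w}$ and writing $Q'(x\vv_0+y\vct{w})=2x^2+2bxy+cy^2$, completing the square gives $2Q'(x\vv_0+y\vct{w})=(2x+by)^2+\delta y^2$ with $\delta=2c-b^2\in\OKPlus$, so $2Q'\to\qf{1,\delta}$. In every sub-case the problem now reduces to showing that the leftover coefficient --- $\delta$, respectively $2\gamma$ --- is represented by $\qf{1,2}$, i.e.\ is of the form $a^2+2e^2$: granting this, $\qf{1,\delta}\to\qf{1,1,2}$ (respectively $\qf{1,2\gamma}\to\qf{1,1,2}$) via $(u,t)\mapsto(u,at,et)$, completing the proof. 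To obtain this I would, as in the previous case, run a carefully chosen small totally positive element --- a suitable multiple of the leftover coefficient --- through $Q\simeq\qf1\perp Q'$ and restrict its decompositions via \Cref{Lemma:2and3} and $\sqrt2\notin K$; for instance, in $2\gamma=x^2+\ve y^2+\gamma z^2$ the option $z^2=1$ immediately yields $2\gamma=2x^2+(\pi y)^2$ with $\pi^2=2\ve$, which is of the required shape $a^2+2e^2$.

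The main obstacle is exactly this last step when $Q'$ represents $2$ but not $1$. Unlike the case where $Q'$ represents $1$ --- where $2\gamma$ is outright a sum of two squares --- here $Q'$ need not be diagonal, and the \enquote{$z=0$} branch of the representations above yields only $2\gamma=x^2+\ve y^2$ (respectively a stubborn expression involving $\delta=\det Q$), which is not visibly of the form $a^2+2e^2$. Forcing it into that shape --- or excluding the branch --- calls for a genuinely delicate descent controlling $\delta$ modulo squares at the primes above $2$ (which may ramify even though $\sqrt2\notin K$); this is where the heart of the proof lies, and the step I expect to be hardest.
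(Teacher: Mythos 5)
Your setup (splitting off $\qf1$, classifying the binary complement $Q'$ by whether it represents $1$ or $2$, and completing the square after multiplying by $2$) is exactly the paper's, and your treatment of the case $1\to Q'$ is correct --- in fact slightly cleaner than the paper's \Cref{Prop:11alpha}, since you absorb the $z^2=1$ branch into ``$2\gamma$ is a sum of two squares'' rather than invoking Siegel's theorem. But in the main case ($2\to Q'$, $1\not\to Q'$) there are two genuine gaps. First, over a Dedekind domain a primitive vector $\vv_0$ with $Q'(\vv_0)=2$ need not extend to an $\OK$-basis; the paper explicitly notes that $\sqrt2\notin K$ does not rule out an ideal $\ia\neq\OK$ with $\ia^2\supset(2)$, and therefore works with a pseudobasis $\OK^2=\ia^{-1}\e+\ia\f$ (\Cref{Lemma:pseudobasis}), so that $Q_0$ takes values $2y^2+2\beta yz+\gamma z^2$ with $y\in\ia^{-1}$, $z\in\ia$, $\gamma\in\ia^{-2}$. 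Your case split on primitivity, and the sub-case you wave off as ``handled analogously,'' do not repair this.

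Second, and more importantly, the step you flag as the obstacle is the heart of the proof, and it is missing. The paper's resolution is not a descent at the dyadic primes but a second application of your own trick with the right test element: represent $\delta=2\gamma-\beta^2$ itself by $Q$, multiply by $2$ and complete the square to get $\delta(2-z^2)=2x^2+(2y+\beta z)^2$; since $2\neq\square$, either $z=\pm1$ (whence $\ia=\OK$ and $\delta\to\qf{2,1}$ outright) or $z=0$ and $\delta=x^2+2y^2$ with $y\in\ia^{-1}$. The latter is not yet a representation by the integral lattice $\qf{1,2}$, but one never needs it to be: for $\alpha\in\OKPlus$ one writes $2\alpha=2X^2+(2Y+\beta Z)^2+\delta Z^2$ with $Z\in\ia$, and then $\delta Z^2=(xZ)^2+2(yZ)^2$ with $yZ\in\ia^{-1}\ia=\OK$, so the fractional denominators cancel against $Z$. (In your free-basis setting the same move closes the argument immediately: from $\delta=x^2+2u^2+2buv+cv^2$ one gets $\delta(2-v^2)=2x^2+(2u+bv)^2$, and both branches $v^2\in\{0,1\}$ give $\delta\to\qf{1,2}$.) Your attempt instead ran $2\gamma$ through $Q$, which is why you landed on the stubborn expression $x^2+\ve y^2$; the correct test element is the reduced determinant $\delta$.
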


\begin{proof}
The case when $Q$ is diagonalizable is handled in Theorem \ref{Th:diagonal}\ref{Th:diagonal-1}; non-diagonalizable $Q$ is covered by Proposition \ref{Prop:nondiag} combined with Lemma \ref{Lemma:nondiagShape}. 
\end{proof}

As suggested in the proof above, we distinguish between the cases when the universal ternary form is diagonalizable and when it is not.  We will slightly abuse the terminology and say that a quadratic form $Q$ is \emph{diagonal} when it is \emph{diagonalizable}, i.e., when $Q\simeq\qf{\alpha_1,\dots,\alpha_n}$ for some $\alpha_1,\dots,\alpha_n\in\OKPlus$.

\subsection{Diagonal forms} \label{Subsec:Diag}

In this subsection we show that if $K \not\ni \sqrt2$ is a totally real number field admitting a diagonal universal ternary quadratic form, then all of $2\OKPlus$ is represented by $\qf{1,1,2,2}$. Note that we make no assumptions on the (non)existence of a nonsquare totally positive unit.

\begin{lemma} \label{Lemma:diagonal}
Let $K$ be a totally real number field not containing $\sqrt2$ and $Q$ a diagonal ternary quadratic form over $K$ representing $1$ and $2$. Then $Q \simeq \qf{1,1,\alpha}$ or $\qf{1,\gamma,\alpha}$ for some $\alpha\in\OKPlus$ and $\gamma\in\OKPlus$ such that $2=\gamma t^2$ for $t\in\OK$.
\end{lemma}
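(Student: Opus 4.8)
Since $Q$ is diagonal, write $Q\simeq\qf{\beta_1,\beta_2,\beta_3}$ with $\beta_i\in\OKPlus$. The first move is to exploit that $Q$ represents $1$: an equation $\beta_1x_1^2+\beta_2x_2^2+\beta_3x_3^2=1$ expresses $1$ as a sum of totally nonnegative integers, and since $1$ is indecomposable (it is a unit, hence covered by \Cref{Lemma:smallAreIndecomposable}), exactly one summand is nonzero and equals $1$. Thus some $\beta_i$ divides $1$, so it is a unit, and $\beta_i\cdot 1=\square$ by \Cref{Lemma:unary}; writing $\beta_i=u^2$ with $u\in\UK$ and rescaling the $i$-th variable by $u^{-1}$ gives, after reordering, $Q\simeq\qf{1,\beta_2,\beta_3}$. (Alternatively one may invoke \Cref{Lemma:UnitsSplit}, but the explicit diagonal computation keeps the form diagonal for free.)

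Next I would analyze a representation $x_1^2+\beta_2x_2^2+\beta_3x_3^2=2$ of $2$ by $\qf{1,\beta_2,\beta_3}$. By \Cref{Lemma:2and3}\ref{Lemma:2and3-2}, the right-hand side decomposes only as $2$ or $1+1$, so the set of nonzero summands on the left is either a single term equal to $2$, or two terms each equal to $1$. Consider first the $1+1$ case. If one of the two nonzero terms is $x_1^2=1$ and the other is $\beta_jx_j^2=1$ (with $j\in\{2,3\}$), then $\beta_j$ is a unit with $\beta_j=\square$, hence $\qf{\beta_j}\simeq\qf1$ and $Q\simeq\qf{1,1,\alpha}$ with $\alpha$ the remaining coefficient. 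If instead the two nonzero terms are $\beta_2x_2^2=1$ and $\beta_3x_3^2=1$, then both $\beta_2,\beta_3$ are unit squares and $Q\simeq\qf{1,1,1}$, which is of the form $\qf{1,1,\alpha}$ with $\alpha=1$.

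Finally, in the case where a single nonzero term equals $2$: if that term is $x_1^2=2$, then $\sqrt2\in K$, contradicting the hypothesis; so the nonzero term is $\beta_jx_j^2=2$ for some $j\in\{2,3\}$, and setting $\gamma=\beta_j$, $t=x_j$ we get $\gamma\in\OKPlus$, $t\in\OK$, and $2=\gamma t^2$, while $Q\simeq\qf{1,\gamma,\alpha}$ with $\alpha$ the remaining coefficient (reordering if $j=3$). This exhausts all cases. I do not expect any serious obstacle here: the argument is a routine case split powered by the sparse decomposition structure of $1$ and $2$ together with $\sqrt2\notin K$. The only point worth care is that the \emph{diagonality} of $Q$ is what lets us conclude that the vector representing $2$ sits inside a coordinate axis, so that $\gamma$ genuinely splits off orthogonally; for a non-diagonalizable $Q$ this step fails, which is exactly why that case is treated separately later.
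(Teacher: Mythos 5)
Your proof is correct and follows essentially the same route as the paper's: split off $\qf{1}$ using the indecomposability of $1$, then use the fact that the only decompositions of $2$ are $2$ and $1+1$ (\Cref{Lemma:2and3}) together with $\sqrt2\notin K$ to force either a second coefficient to be a unit square or some coefficient $\gamma$ with $2=\gamma t^2$. Your case analysis is just a slightly more explicit spelling-out of the paper's argument; no gaps.
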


\begin{proof}
A diagonal ternary form representing $1$ can be written as $\qf{1,\alpha_1,\alpha_2}$. The only decompositions of $2$ are $1+1$ and $2+0$ by Lemma \ref{Lemma:2and3}\ref{Lemma:2and3-2}. Since $2$ is not a square, $\qf{1}$ does not represent $2$, so $\qf{\alpha_1,\alpha_2}$ represents $1$ or $2$. If it represents $1$, then $Q$ is isometric to $\qf{1,1,\alpha}$ for some $\alpha\in\OKPlus$ just as claimed; if it does not represent $1$ but represents $2$, then we may assume $2 \to \qf{\alpha_1}$. This means $2=\alpha_1 t^2$ exactly as we needed.
\end{proof}

We first examine the form $\qf{1,1,\alpha}$. Here, as a side note, one might deduce quite a lot of other information -- e.g., $\alpha$ must be a nonsquare indecomposable (unless $K$ is $\Q(\!\sqrt5)$); further, every nonsquare indecomposable is of the form $\alpha t^2$; and by considering the representation of $3$, one learns that if $\sqrt3,\sqrt5\notin K$, then $2$ or $3 \to \qf{\alpha}$. The former case is covered by the forms $\qf{1,\gamma,\alpha}$ of the other type, and the latter leads to a contradiction (it would yield that $2$ is unramified, which contradicts Theorem \ref{Th:unramified}). However, for now, our aim is a clean proof that if the form is universal, then all of $2\OKPlus$ is represented by $\qf{1,1,2,2}$; this is contained in the next proposition.

\begin{proposition} \label{Prop:11alpha}
Suppose \eqref{AssumForm} and $Q\simeq \qf{1, a ,\alpha}$ for some $\alpha\in\OKPlus$ and $ a \in\{1,2\}$. Then every element of $2\OKPlus$ is represented by $\qf{1,1,2,2}$.
\end{proposition}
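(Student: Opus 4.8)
The plan is to reduce the whole statement to the single fact that \emph{$2\alpha$ is a sum of two squares of elements of $\OK$}, say $2\alpha=p^2+q^2$ with $p,q\in\OK$. Granting this, the proposition is a one-line computation: since $Q$ is universal and $Q\simeq\qf{1,1,\alpha}$, the form $\qf{1,1,\alpha}$ is itself universal, so any $\beta\in\OKPlus$ can be written as $\beta=a^2+b^2+\alpha c^2$ for suitable $a,b,c\in\OK$, and then
\[
2\beta \;=\; 2a^2+2b^2+2\alpha c^2 \;=\; (pc)^2+(qc)^2+2a^2+2b^2 \;=\; \qf{1,1,2,2}(pc,\,qc,\,a,\,b),
\]
so that $2\beta$ is represented by $\qf{1,1,2,2}$.

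It therefore remains to prove that $2\alpha$ is a sum of two squares. For this I would feed $2\alpha$ itself into the universal form and write $2\alpha=a^2+b^2+\alpha c^2$ with $a,b,c\in\OK$. If $c=0$, then $2\alpha=a^2+b^2$ and we are done. If $c\neq0$, rearrange to $\alpha(2-c^2)=a^2+b^2$: the right-hand side is totally nonnegative and $\alpha\succ0$, so $2-c^2\succeq0$; moreover $2-c^2\neq0$, since $c^2=2$ would force $\sqrt2\in K$, and a nonzero totally nonnegative element of $\OK$ is automatically totally positive (each embedding $\sigma_i$ is injective). Hence $2=c^2+(2-c^2)$ is a decomposition of $2$ into two totally positive integers, which by \Cref{Lemma:2and3}\ref{Lemma:2and3-2} must be $1+1$; so $c^2=1$, $\alpha=a^2+b^2$, and $2\alpha=(a+b)^2+(a-b)^2$ is again a sum of two squares.

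I do not expect a serious obstacle here; the only delicate point is the bookkeeping in the last step, namely remembering to rule out $c^2=2$ by the hypothesis $\sqrt2\notin K$ and checking that $2-c^2$ is genuinely \emph{totally positive} (not merely totally nonnegative) before invoking \Cref{Lemma:2and3}. Both are routine given the machinery already in place. It is worth noting that this diagonalizable case uses nothing about the units of $K$ or the ramification of $2$ --- consistent with the remark preceding the proposition --- and that the genuinely hard work in proving \Cref{Th:KitaokaSquares} is reserved for the non-diagonalizable universal ternary forms.
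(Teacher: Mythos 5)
Your proof is correct and follows the same core strategy as the paper's: represent $2\alpha$ by $Q\simeq\qf{1,1,\alpha}$, rearrange to $\alpha(2-c^2)=a^2+b^2$, use total nonnegativity and $\sqrt2\notin K$ together with \Cref{Lemma:2and3}\ref{Lemma:2and3-2} to force $c^2\in\{0,1\}$, conclude that $2\alpha$ is a sum of two squares, and then multiply a generic representation $\beta=a^2+b^2+\alpha c^2$ by $2$. The one genuine divergence is your handling of the subcase $c^2=1$. There the paper notes that $\alpha=x^2+y^2$ would make $\qf{1,1,1,1}$ universal, invokes Siegel's theorem \cite{Si} to conclude $K=\Q$ or $\Q(\!\sqrt5)$, and then disposes of those two fields separately via \cite{CKR}. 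You instead observe that $\alpha=a^2+b^2$ already gives $2\alpha=(a+b)^2+(a-b)^2$, so $2\alpha$ is a sum of two squares in this case too and no contradiction is needed. This is a small but real simplification: it makes the proposition self-contained, avoiding both Siegel's theorem and the special-casing of $\Q$ and $\Q(\!\sqrt5)$. What the paper's detour buys in exchange is the sharper structural information recorded in \Cref{Th:diagonal}\ref{Th:diagonal-2} (that outside the two exceptional fields one actually has $z=0$, i.e., $2\alpha\to\qf{1,1}$ with $\alpha\not\to\qf{1,1}$); your argument does not distinguish the two subcases and so does not recover that refinement, but for the statement as posed it is not needed. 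Your auxiliary claim that a nonzero element of $\OK$ with all embeddings nonnegative is totally positive is correct (each $\sigma_i$ is an injective field homomorphism), and the rest of the bookkeeping checks out.
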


\begin{proof}
First of all, if 
$K=\Q$ or $\Q(\!\sqrt5)$, then  the claim holds, as $\Q$ does not admit a universal ternary form, while over $\Q(\!\sqrt5)$, the form $\qf{1,1,2}$ is universal by \cite{CKR}. Assume now that $K\neq \Q,\Q(\!\sqrt5)$. 

Consider the representation of $2\alpha$ by $Q$: we have $2\alpha = x^2+ a  y^2+\alpha z^2$ for some $x,y,z\in\OK$. Rearranged, this yields
\[
\alpha(2-z^2) = x^2+ a  y^2;
\]
in particular, since all the other terms are totally nonnegative, we get $2-z^2 \succeq 0$. This yields $z^2 \in \{0,1,2\}$ (by Lemma \ref{Lemma:2and3}), and as $2$ is not a square, $z^2\neq 2$. 

If $z^2=1$, then $\alpha = x^2+ a  y^2$. As $ a \in\{1,2\}$, it is the sum of two squares, and so $\alpha \to \qf{1,1,1}$ and $Q \to \qf{1,1,1,1,1,1}$. 
In particular, the sum of six squares is universal -- but this is impossible by \cite{Si} (as we are assuming $K\neq \Q,\Q(\!\sqrt5)$).

Thus $z^2=0$ and $2\alpha = x^2+ a  y^2$. Now, since every element of $2\OKPlus$ can be represented by the form $2Q$, it can be written as
\[
2X^2+2 a  Y^2+2\alpha Z^2 = 2X^2+2 a  Y^2+(x^2+ a  y^2)Z^2. 
\]
If $ a =1$, this yields a representation by $\qf{2,2,1,1}$. If $ a =2$, we get a representation by $\qf{2,1,1,2}$.
\end{proof}

The rest of this subsection is strictly speaking unnecessary, since Proposition \ref{Prop:nondiag} covers all forms of the shape $\qf1 \perp Q_0$ where $Q_0$ represents $2$, regardless of diagonalizability; however, in the present proof we get some extra information about the diagonal case, see Theorem \ref{Th:diagonal}\ref{Th:diagonal-2}.

\begin{proposition} \label{Prop:1gammaalpha}
Assume \eqref{AssumForm} and $Q\simeq \qf{1,\gamma,\alpha}$ for some $\alpha,\gamma\in\OKPlus$ such that $2=\gamma t^2$ for some $t\in\OK$. Then every element of $2\OKPlus$ is represented by $\qf{1,1,2,2}$.
\end{proposition}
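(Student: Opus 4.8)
The plan is to reduce the statement to showing that $2\alpha$ is represented by $\qf{1,2}$. Since $\gamma t^2=2$ with $\gamma\in\OK$, we have $t\mid 2$ and $s:=2/t=\gamma t\in\OK$, so that $2\gamma=\gamma^2t^2=s^2$ is a square; hence the unary lattice $\qf{2\gamma}$ is represented by $\qf1$ (via $\e\mapsto s\f$ where $Q(\e)=2\gamma$ and $Q(\f)=1$). As $Q$ is universal, every element of $2\OKPlus$ is represented by $2Q=\qf{2,2\gamma,2\alpha}$, hence by $\qf2\perp\qf1\perp\qf{2\alpha}=\qf{1,2,2\alpha}$. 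Therefore it suffices to prove $\qf{2\alpha}\to\qf{1,2}$: then $\qf{1,2,2\alpha}\to\qf{1,2}\perp\qf{1,2}\simeq\qf{1,1,2,2}$ and we are done. A convenient sufficient condition is $\alpha\to\qf{1,\gamma}$, because then $2\alpha\to\qf2\perp\qf{2\gamma}\to\qf{1,2}$.

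Next I would examine the representation of $2\alpha$ by the universal form $Q$: write $2\alpha=x^2+\gamma y^2+\alpha z^2$ with $x,y,z\in\OK$, so that $\alpha(2-z^2)=x^2+\gamma y^2\succeq 0$. Since the only decomposition of $2$ into two totally positive integers is $1+1$ (\Cref{Lemma:2and3}) and $\sqrt2\notin K$, this forces $z^2\in\{0,1\}$. If $z^2=1$, then $\alpha=x^2+\gamma y^2\to\qf{1,\gamma}$ and the proposition follows from the first paragraph; explicitly, $2\alpha=2x^2+2\gamma y^2=(sy)^2+2x^2\to\qf{1,2}$.

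The remaining case $z^2=0$, in which $2\alpha=x^2+\gamma y^2$ but possibly $\alpha\not\to\qf{1,\gamma}$, is the core of the argument. I would attack it by feeding further elements into the universal form $Q$ — the representations of $2$, of $\alpha$, and of $\alpha+2$ — analysing each via the short list of decompositions in \Cref{Lemma:2and3}, and also using that $\qf{1,2,2\alpha}$ already represents all of $2\OKPlus$ (so that a representation $4\alpha=p^2+2q^2+2\alpha r^2$ with $r^2=1$ would immediately give $2\alpha\to\qf{1,2}$). The aim is to show that each branch collapses to one of the following: (a) $\alpha$ (or $\gamma$) is the square of a unit, so $Q\simeq\qf{1,1,\gamma}$ (resp.\ $\qf{1,1,\alpha}$) and \Cref{Prop:11alpha} applies; (b) $\alpha$ (or $\gamma$) equals $2$ times the square of a unit, so $Q\simeq\qf{1,2,\gamma}$ (resp.\ $\qf{1,2,\alpha}$) and \Cref{Lemma:12alpha} applies; (c) $\alpha$ is a nonsquare unit, which forces $\abs{\UKPlus/\UKctv}=2$ by \Cref{Prop:TooManyUnits}, hence $2\alpha=\square$ by \Cref{Th:MainConditions}\ref{Th:MainConditions-TwoEpsilon}, so $2\alpha\to\qf1\to\qf{1,2}$; or (d) $\alpha\to\qf{1,\gamma}$. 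I expect the main obstacle to be cleanly closing off the residual configurations not covered by (a)--(d) — in particular the case where $\gamma$ and $\alpha$ are both indecomposable, nonsquare and non-units: there one must either squeeze sharper information out of universality (e.g.\ from the representations of $3$, of $2\gamma$, or of small multiples of $\alpha$) or run a finite descent on the norm in the spirit of \Cref{Prop:descentGeneralised}, and this is the step I anticipate being the most delicate.
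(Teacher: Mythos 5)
Your reduction in the first paragraph is correct, and your $z^2=1$ branch closes properly, but the $z=0$ branch --- which you yourself flag as the core --- is a genuine gap, and the speculative machinery in your final paragraph is not the way the argument closes. The trouble comes from your choice of target element: representing $2\alpha$ by $Q$ gives, when $z=0$, only $2\alpha = x^2+\gamma y^2$, and here $\gamma y^2 = 2(y/t)^2$ with $y/t$ not necessarily in $\OK$, so you cannot conclude $2\alpha\to\qf{1,2}$ (you only get $4\alpha\to\qf{1,2}$ after clearing denominators, which is not enough). The fix is to feed $\gamma\alpha$ into $Q$ instead of $2\alpha$. Writing $\gamma\alpha = x^2+\gamma y^2+\alpha z^2$ gives $\alpha(\gamma-z^2)=x^2+\gamma y^2\succeq 0$, hence $2-(tz)^2=t^2(\gamma-z^2)\succeq0$, so $(tz)^2\in\{0,1\}$ by \Cref{Lemma:2and3} and $2\neq\square$. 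If $(tz)^2=1$, then $t$ is a unit, so $\qf{\gamma}\simeq\qf{2}$ and \Cref{Lemma:12alpha} applies. If $tz=0$, then $z=0$ and $\gamma\alpha=x^2+\gamma y^2$; multiplying by $t^2$ now converts the coefficient $\gamma$ into $\gamma t^2=2$ on both sides, yielding $2\alpha=(tx)^2+2y^2\to\qf{1,2}$. From there your first paragraph finishes the proof exactly as you wrote it.

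So the single missing idea is the choice of $\gamma\alpha$ over $2\alpha$: since the final step is multiplication by $t^2$, one must start from $\gamma\alpha$ so that the surviving quadratic term acquires coefficient $\gamma t^2=2$ rather than $2t^2$. With that substitution your proof becomes the paper's proof; none of the case analysis via representations of $2$, $3$, $\alpha$, or $\alpha+2$, nor any norm descent, is required.
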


\begin{proof}
This time we consider the representation of $\gamma\alpha$ by $Q$. First, we get
\[
\alpha(\gamma-z^2) = x^2+\gamma y^2
\]
for some $x,y,z\in\OK$, so $\gamma-z^2 \succeq 0$. Thus, $2-(tz)^2 \succeq 0$, and hence $(tz)^2$ is $0$ or $1$. 

If $(tz)^2=1$, then $t$ is a unit and $\qf{\gamma}\simeq\qf{2}$; this case has been covered in Proposition \ref{Prop:11alpha} (for $ a =2$). 

If $(tz)^2=0$, then $z=0$, and we have
\[
\gamma\alpha = x^2 + \gamma y^2;
\]
multiplication by $t^2$ yields $2\alpha = (tx)^2+2y^2$. Again, every element of $2\OKPlus$ can be represented by the form $2Q\simeq\qf{2,2\gamma,2\alpha}$; therefore, it can be written as
\[
2X^2+2\gamma Y^2+2\alpha Z^2 = 2X^2+(\gamma t Y)^2+\bigl((tx)^2+2y^2\bigr)Z^2 \to \qf{2,1,1,2}.\qedhere
\]
\end{proof}

Now we are ready to prove the main theorem of this subsection.

\begin{theorem} \label{Th:diagonal}
Suppose \eqref{AssumForm} and that $Q$ is diagonal. Then:
 \begin{enumerate}
     \item Every element of $2\OKPlus$ is represented by $\qf{1,1,2,2}$. \label{Th:diagonal-1}
     \item $Q\simeq\qf{1,1,\alpha}$ where $2\alpha$ is the sum of two squares, or $Q\simeq\qf{1,\gamma,\alpha}$ where $2=\gamma t^2$ for some $t\in\OK$ and $2\alpha = x^2+2y^2$ for $x,y\in\OK$. \label{Th:diagonal-2}
 \end{enumerate}
\end{theorem}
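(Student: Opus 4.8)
The plan is to reduce the statement to the shape classification of \Cref{Lemma:diagonal} and then assemble the work already carried out in \Cref{Prop:11alpha,Lemma:12alpha,Prop:1gammaalpha}. Since $Q$ is universal it represents $1$ and $2$ (both lie in $\OKPlus$), so \Cref{Lemma:diagonal} applies and gives two cases: either $Q\simeq\qf{1,1,\alpha}$, or $Q\simeq\qf{1,\gamma,\alpha}$ with $2=\gamma t^2$ for some $t\in\OK$. For part \ref{Th:diagonal-1} there is then essentially nothing to do: the first case is exactly \Cref{Prop:11alpha} and the second is exactly \Cref{Prop:1gammaalpha}, and in both every element of $2\OKPlus$ is represented by $\qf{1,1,2,2}$.

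For part \ref{Th:diagonal-2} I would re-read the proofs of those results, all of which run on the same template: write out a representation of $2\alpha$ (resp.\ $\gamma\alpha$) by $Q$, use total positivity to force the last coordinate to satisfy $z^2\in\{0,1\}$, and use $\sqrt2\notin K$ to rule out $z^2=2$. In the case $Q\simeq\qf{1,1,\alpha}$, the proof of \Cref{Prop:11alpha} gives $2\alpha=x^2+y^2$ when $z=0$, and $\alpha=x^2+y^2$ when $z^2=1$; in the latter subcase $2\alpha=(x+y)^2+(x-y)^2$ is again a sum of two squares, so the first alternative of \ref{Th:diagonal-2} holds. In the case $Q\simeq\qf{1,\gamma,\alpha}$ with $2=\gamma t^2$, I would split on whether $t\in\UK$. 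If $t\in\UK$, then $\qf\gamma\simeq\qf2$ and $Q\simeq\qf{1,2,\alpha}$, and the computation inside \Cref{Lemma:12alpha} yields $2\alpha=x^2+2y^2$ when $z=0$, or $\alpha=x^2+2y^2$ when $z^2=1$ (whence $2\alpha=(2y)^2+2x^2$). If $t\notin\UK$, then the implication \enquote{$(tz)^2=1\Rightarrow t\in\UK$} from the proof of \Cref{Prop:1gammaalpha} forces $z=0$, and hence $2\alpha=(tx)^2+2y^2$. In every subcase $2\alpha$ has the form $u^2+2v^2$, which is the second alternative (with $\gamma=2$, $t=1$ in the case where $t$ was a unit). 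Collecting the cases proves \ref{Th:diagonal-2}.

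Since the heavy lifting has already been done in the preceding lemmas and propositions, I do not expect a genuine obstacle here; the proof is essentially a matter of bookkeeping. The points that require care are: making sure the degenerate branch $z^2=1$ (which, over a field admitting a universal ternary, can occur only for $K=\Q(\!\sqrt5)$) still lands in one of the claimed normal forms, and checking that the instance $\gamma=2$, $t=1$ is legitimately permitted in the second alternative of part \ref{Th:diagonal-2}. Matching the several subcases to the two stated alternatives is the most error-prone step.
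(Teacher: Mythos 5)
Your proposal is correct and follows the paper's route: \Cref{Lemma:diagonal} for the shape classification, then \Cref{Prop:11alpha,Prop:1gammaalpha} for part \ref{Th:diagonal-1}, and an inspection of their proofs for part \ref{Th:diagonal-2}. The one place you genuinely diverge is in closing the degenerate branch $z^2=1$ of part \ref{Th:diagonal-2}: the paper disposes of it by noting that this branch forces $K=\Q$ or $\Q(\!\sqrt5)$ (via Siegel) and then checking that the diagonalizable universal ternaries over $\Q(\!\sqrt5)$ listed in \cite{CKR} are all of the desired type, whereas you observe directly that $\alpha=x^2+y^2$ gives $2\alpha=(x+y)^2+(x-y)^2$ and $\alpha=x^2+2y^2$ gives $2\alpha=(2y)^2+2x^2$, so the degenerate branch lands in one of the claimed normal forms anyway. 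Your version is self-contained and slightly cleaner, sparing the appeal to the explicit classification over $\Q(\!\sqrt5)$; everything else (the unit/non-unit split on $t$, the rescaling $\qf{\gamma}\simeq\qf{2}$ when $t\in\UK$) matches the paper.
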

\begin{proof}
Lemma \ref{Lemma:diagonal} gives the only two possible shapes of the diagonal universal ternary quadratic form $Q$ (under the assumption $\sqrt2\notin K$). These are then handled in Propositions \ref{Prop:11alpha} and \ref{Prop:1gammaalpha}.

The second part is obtained directly by inspecting the proofs. We do not list $\Q$ and $\Q(\!\sqrt5)$ as exceptions: $\Q$ does not admit a universal ternary form, and for $\Q(\!\sqrt5)$, one easily checks that all the diagonalizable ternary forms, listed in \cite{CKR}, are of the desired type.
\end{proof}

\subsection{Non-diagonalizable forms} \label{Subsec:Nondiag}
Now we turn to the case when the universal ternary quadratic form $Q$ is not diagonal.

\begin{lemma} \label{Lemma:nondiagShape}
Suppose \eqref{AssumForm} and that $Q$ is not diagonalizable. Then $Q\simeq\qf{1}\perp Q_0$ with $2\to Q_0$.
\end{lemma}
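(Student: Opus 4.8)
The goal is to show that a universal ternary quadratic form $Q$ over $K$ (with $\sqrt2\notin K$) which is \emph{not} diagonalizable must split off a unary unit summand of the form $\qf1$, leaving a binary form that still represents $2$. The natural first move is to use universality to represent $1$ and $2$. Since $1\to Q$ and $1$ is a unit, \Cref{Lemma:UnitsSplit} gives $Q\simeq\qf1\perp Q_0$ for some binary lattice $Q_0$. So the content is really: (i) the unit summand can be taken to be $\qf1$ specifically (not some other $\qf\ve$), and (ii) after splitting off $\qf1$, the leftover binary form $Q_0$ represents $2$.

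\medskip

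\textbf{Step 1: splitting off $\qf1$.} Apply \Cref{Lemma:UnitsSplit} with $\ve=1$: since $1\to Q$, we get $Q\simeq\qf1\perp Q_0$ where $Q_0$ is a binary (classical, positive definite) lattice over $K$. Because $Q$ is a free lattice of rank $3$ and $\qf1$ is free of rank $1$, the orthogonal complement $Q_0$ is also free, hence corresponds to a genuine binary quadratic form. (If one wants to be careful: the complement of a free direct summand in a free module over a Dedekind domain is free, or alternatively invoke that $Q_0$ is free iff its Steinitz class is trivial, which it must be since the class of $Q$ is.)

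\medskip

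\textbf{Step 2: $2\to Q_0$.} Now use $2\to Q$. By \Cref{Lemma:2and3}\ref{Lemma:2and3-2}, the only decompositions of $2$ are $2$ and $1+1$. Write the representation of $2$ by $\qf1\perp Q_0$ as $2 = a^2 + Q_0(\vv)$ with $a\in\OK$, $\vv\in Q_0$. Since $2$ is not a square (as $\sqrt2\notin K$), the summand $a^2$ cannot equal $2$, so the decomposition is forced to be the $1+1$ type or the $0+2$ type; in either case $Q_0(\vv)\in\{1,2\}$. If $Q_0$ represents $1$, then again $Q_0\simeq\qf1\perp\qf\beta$ for some $\beta\in\OKPlus$ by \Cref{Lemma:UnitsSplit}, making $Q\simeq\qf{1,1,\beta}$ diagonal — contradicting the hypothesis that $Q$ is not diagonalizable. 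Hence $Q_0$ represents $2$, which is exactly the claim.

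\medskip

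\textbf{Main obstacle.} There is no deep obstacle here; the argument is short. The one point requiring a little care is the freeness of $Q_0$ in Step 1 — the paper works with quadratic forms (free lattices) throughout \Cref{Sec:KitaokaSquares}, so one should note explicitly that the orthogonal complement of the free summand $\qf1$ inside the free rank-$3$ lattice is again free, so that $Q_0$ is literally a binary quadratic form and the subsequent coordinate-wise arguments (used in \Cref{Prop:nondiag}) are legitimate. The only other subtlety is the logical structure: we must rule out $Q_0$ representing $1$, and the clean way is the diagonalizability contradiction above rather than any norm or indecomposability argument.
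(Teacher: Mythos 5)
Your proof is correct and follows essentially the same route as the paper's: split off $\qf1$ via \Cref{Lemma:UnitsSplit}, analyze the representation of $2$ using \Cref{Lemma:2and3}, rule out $2\to\qf1$ since $\sqrt2\notin K$, and rule out $1\to Q_0$ because it would make $Q$ diagonalizable. Your additional remark on the freeness of $Q_0$ (via the triviality of its Steinitz class) is a valid and harmless precision that the paper leaves implicit.
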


\begin{proof}
Note that since $Q$ is universal, it represents $1$ and $2$; therefore, it can be written as $\qf{1} \perp Q_0$ by Lemma \ref{Lemma:UnitsSplit}. The only decompositions of $2$ are $2+0$ and $1+1$ by Lemma \ref{Lemma:2and3}\ref{Lemma:2and3-2}. Since $2\neq\square$ by the assumption, we have $2 \not\to \qf{1}$. Thus, $Q_0$ represents $1$ or $2$. If $1\to Q_0$, then $Q\simeq\qf{1,1,\alpha}$; in particular, it is diagonalizable. Hence, $2 \to Q_0$.
\end{proof}

The theory we develop in the rest of this subsection works more generally for every universal ternary quadratic form of the shape $\qf1 \perp Q_0$ where $2\to Q_0$. We need the following lemma on two-dimensional free lattices over Dedekind domains. It is a slight generalization of \cite[Lemma~5.6]{KTZ}; for completeness, we provide a proof.

\begin{lemma} \label{Lemma:pseudobasis}
Let $(\OK^2,Q_0)$ be a free lattice representing $\alpha\in\OKPlus$. Then there exist vectors $\e \in \OK^2$ with $Q_0(\e)=\alpha$ and $\f\in K^2$ and an (integral) ideal $\ia$ with $\ia^2 \supset (\alpha)$ such that $\OK^2 = \ia^{-1}\e + \ia\f$. Further, if we denote $\beta = B_{Q_0}(\e,\f)$ and $\gamma=Q_0(\f)$, then:
 \begin{enumerate}
     \item $\alpha \in \ia^2$, $\beta \in \OK$ and $\gamma \in \ia^{-2}$, \label{Lemma:pseudobasis-a}
     \item $\alpha\gamma-\beta^2 \in \OKPlus$, \label{Lemma:pseudobasis-b}
     \item $\alpha y+\beta z \in \ia$ for every $y \in \ia^{-1}$ and $z \in \ia$. \label{Lemma:pseudobasis-c}
 \end{enumerate}
\end{lemma}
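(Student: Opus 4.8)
The plan is to build the pseudo-basis $(\e,\f)$ directly from the representation $Q_0(\e)=\alpha$ together with the structure theorem for finitely generated modules over the Dedekind domain $\OK$, and then to deduce (a)--(c) purely formally from two inputs: the \emph{classical} hypothesis, which forces $B_{Q_0}$ to take values in $\OK$ on $\OK^2\times\OK^2$, and positive-definiteness (which makes the Gram determinant totally positive).

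For the construction I would first pick any $\e\in\OK^2$ with $Q_0(\e)=\alpha$; it is nonzero since $\alpha\succ 0$. Let $N:=K\e\cap\OK^2$ be the saturation of $\OK\e$ in $\OK^2$. Then $\OK^2/N$ is torsion-free of rank $1$, hence projective, so the quotient map splits and $\OK^2=N\oplus P$ for some rank-$1$ submodule $P$. Inside the line $K\e$ the module $N$ has the form $N=\mathfrak d\e$ for the fractional ideal $\mathfrak d:=\{k\in K:k\e\in\OK^2\}$, which contains $\OK$ (as $\e\in\OK^2$); set $\ia:=\mathfrak d^{-1}$, which is then an \emph{integral} ideal, and $N=\ia^{-1}\e$. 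Since $P\cong\OK^2/N$ has Steinitz class inverse to that of $N$, i.e. $[P]=[\ia]$, we may write $P=\ia\f$ for a suitable $\f\in K^2$ spanning a line complementary to $K\e$. This yields $\OK^2=\ia^{-1}\e\oplus\ia\f$ with $\ia$ integral and $\e,\f$ a $K$-basis of $K^2$.

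The three claims then follow mechanically. For (a): whenever $k,k'\in\ia^{-1}=\mathfrak d$ we have $k\e,k'\e\in N\subseteq\OK^2$, so classicality gives $kk'\alpha=B_{Q_0}(k\e,k'\e)\in\OK$; hence $\ia^{-2}\alpha\subseteq\OK$, i.e. $\alpha\in\ia^2$. The identical argument applied to $\ia\f\subseteq\OK^2$ gives $\gamma=Q_0(\f)\in\ia^{-2}$, and applied to the pair $\ia^{-1}\e,\ia\f\subseteq\OK^2$ it gives $\OK\beta=\ia^{-1}\ia\,B_{Q_0}(\e,\f)\subseteq\OK$, so $\beta\in\OK$. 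For (b): in the basis $\e,\f$ the form $Q_0$ on $K^2$ has Gram matrix $\bigl(\begin{smallmatrix}\alpha&\beta\\ \beta&\gamma\end{smallmatrix}\bigr)$, which is nondegenerate (else $Q_0$ would vanish on a nonzero vector of $K^2$) and totally positive definite, so its determinant $\alpha\gamma-\beta^2$ is totally positive; it lies in $\OK$ because $\alpha\gamma\in\ia^2\ia^{-2}=\OK$ and $\beta^2\in\OK$. For (c): given $y\in\ia^{-1}$ and $z\in\ia$, the vector $y\e+z\f$ lies in $\ia^{-1}\e\oplus\ia\f=\OK^2$, and $\alpha y+\beta z=B_{Q_0}(\e,\,y\e+z\f)$; since $\ia^{-1}\e\subseteq\OK^2$, classicality gives $k\,B_{Q_0}(\e,\vv)\in\OK$ for every $k\in\ia^{-1}$ and $\vv\in\OK^2$, whence $B_{Q_0}(\e,\vv)\in\ia$, which is precisely $\alpha y+\beta z\in\ia$.

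The one place that needs care — and the only genuine obstacle — is the construction step: one must saturate, working with $N=K\e\cap\OK^2$ rather than $\OK\e$ itself (the latter need not be a direct summand when $\e$ is imprimitive), and check that the coefficient ideal $\mathfrak d$ contains $\OK$ so that $\ia=\mathfrak d^{-1}$ comes out integral. Once the splitting $\OK^2=N\oplus P$ and the identification $[P]=[\ia]$ are in hand — both standard consequences of the structure theory over Dedekind domains — everything else is bookkeeping with the classical condition. This is the argument of \cite[Lemma~5.6]{KTZ}, phrased so as not to assume that $Q_0$ represents $\alpha$ by a primitive vector.
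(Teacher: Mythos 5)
Your proposal is correct and follows essentially the same route as the paper: once the decomposition $\OK^2=\ia^{-1}\e\oplus\ia\f$ with $\ia$ integral is in hand, parts (a)--(c) are exactly the same bookkeeping with the classical condition and fractional ideals (the paper proves positivity in (b) via Cauchy--Schwarz, which is the same as your Gram-determinant argument). The only divergence is that the paper imports the module-theoretic splitting from \cite[Lemma~4.25]{KTZ}, identifying $\ia=(e_1,e_2)$ explicitly, whereas you reprove it self-containedly via the saturation $N=K\e\cap\OK^2$ and Steinitz classes; both are valid.
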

\begin{proof}
First we prove the part which does not concern the quadratic structure, namely: \emph{For every nonzero $\e \in \OK^2$, there exists an $\f\in K^2$ such that $\OK^2 = \ia^{-1}\e + \ia\f$ for an ideal $\ia$; moreover, if $\e=\begin{psmallmatrix} e_1 \\ e_2 \end{psmallmatrix}$ and $\f=\bigl(\begin{smallmatrix} f_1 \\ f_2 \end{smallmatrix}\bigr)$, then $\ia=(e_1,e_2)$ and $\ia^{-1}=(f_1,f_2)$.}

For a vector $\vv = \begin{psmallmatrix} v_1 \\ v_2 \end{psmallmatrix}\in K^2$, denote $\mathcal{I}_{\vv}$ the set of all $\alpha\in K$ such that $\alpha\vv \in \OK^2$. 
Clearly, this is a fractional ideal, and in fact, $\mathcal{I}_{\vv}=(v_1,v_2)^{-1}$ almost by definition. Now, our claim is precisely \cite[Lemma~4.25]{KTZ} if we take $\ia=\mathcal{I}_{\f}$; this is an integral ideal as it is equal to $(e_1,e_2)$.

Now we consider the quadratic structure on $\OK^2$. We pick $\e$ so that $Q_0(\e)=\alpha$, and find the corresponding $\f$ and $\ia$. Proofs of all the remaining facts are based solely on the knowledge that $e_1,e_2\in\ia$ and $f_1,f_2\in\ia^{-1}$ and that $Q_0$ and $B_{Q_0}$ are polynomials with coefficients in $\OK$. (It is important that we assume all lattices to be classical.)

First, $\alpha = Q_0(\e)\in\OK e_1^2+\OK e_1e_2+\OK e_2^2$, so $\alpha \in \ia^2$. This is equivalent to $\ia^{2} \supset (\alpha)$. By an analogous argument, $\gamma = Q_0(\f)$ yields $\gamma \in \ia^{-2}$, and similarly (using bilinearity and the fact that $B_{Q_0}$ takes integral values on the standard basis vectors), $\beta=B_{Q_0}(\e,\f)\in \OK e_1f_1+\OK e_1f_2+\OK e_2f_1+\OK e_2f_2 = \ia\ia^{-1} = \OK$. This concludes the proof of \ref{Lemma:pseudobasis-a}. 

As for \ref{Lemma:pseudobasis-b}, observe $\alpha\gamma \in \ia^2\ia^{-2} = \OK$. This, together with $\beta\in\OK$, yields $\alpha\gamma-\beta^2 \in \OK$; further, the expression is totally positive by Cauchy--Schwarz inequality. Part \ref{Lemma:pseudobasis-c} follows directly from manipulation with fractional ideals.
\end{proof}

As an application of Lemma \ref{Lemma:pseudobasis}, we get a description of non-diagonalizable ternary quadratic forms representing $1$ and $2$.

\begin{lemma} \label{Lemma:shapeOfNondiag}
Suppose \eqref{AssumForm} and that $Q \simeq \qf{1}\perp Q_0$ where $2 \to Q_0$. Then there exists an ideal $\ia$ with $\ia^2 \supset (2)$ and elements $\beta\in\OK$, $\gamma\in\ia^{-2}$ with $\gamma\succ0$ such that every value represented by $Q_0$ is equal to $2y^2+2\beta yz + \gamma z^2$ where $y \in \ia^{-1}$ and $z \in \ia$, and further:
 \begin{enumerate}
     \item $2\gamma-\beta^2 \in \OKPlus$,
     \item $2y+\beta z \in \ia \subset \OK$.
 \end{enumerate}
\end{lemma}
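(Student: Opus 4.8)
The plan is to apply \Cref{Lemma:pseudobasis} directly to the free rank-two lattice $Q_0$ with the element $\alpha = 2$, which is legitimate since $2 \to Q_0$ by hypothesis. This immediately produces vectors $\e \in \OK^2$ with $Q_0(\e) = 2$ and $\f \in K^2$, together with an integral ideal $\ia$ satisfying $\ia^2 \supset (2)$, such that $\OK^2 = \ia^{-1}\e + \ia\f$. Setting $\beta = B_{Q_0}(\e,\f)$ and $\gamma = Q_0(\f)$, parts \ref{Lemma:pseudobasis-a}--\ref{Lemma:pseudobasis-c} of that lemma give exactly: $\beta \in \OK$, $\gamma \in \ia^{-2}$, $2\gamma - \beta^2 \in \OKPlus$ (so in particular $\gamma \succ 0$ once we know it is totally positive, which follows from $\gamma = Q_0(\f) \succeq 0$ for the positive definite $Q_0$ and the fact that $\gamma = 0$ would force $\beta^2 \prec 0$), and $2y + \beta z \in \ia \subset \OK$ for all $y \in \ia^{-1}$, $z \in \ia$.

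Next I would compute the value $Q_0$ takes on a general lattice vector. Writing an arbitrary element of $\OK^2 = \ia^{-1}\e + \ia\f$ as $y\e + z\f$ with $y \in \ia^{-1}$ and $z \in \ia$, the quadratic identity gives
\[
Q_0(y\e + z\f) = y^2 Q_0(\e) + 2yz\, B_{Q_0}(\e,\f) + z^2 Q_0(\f) = 2y^2 + 2\beta yz + \gamma z^2,
\]
which is precisely the claimed shape. The two itemized conclusions are then just restatements of \Cref{Lemma:pseudobasis}\ref{Lemma:pseudobasis-b} and \ref{Lemma:pseudobasis-c} specialized to $\alpha = 2$, together with the observation $\ia \subset \OK$ since $\ia$ is integral.

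This proof is essentially a direct unpacking of the previous lemma, so there is no serious obstacle; the only point requiring a word of care is extracting $\gamma \succ 0$ (strict total positivity) rather than merely $\gamma \succeq 0$: if some embedding sent $\gamma$ to $0$, then that embedding would send $2\gamma - \beta^2$ to $-\beta^2 \leq 0$, contradicting $2\gamma - \beta^2 \in \OKPlus$ (and also $\beta \ne 0$ there, since $\f \ne 0$). One should also note that the classical assumption on $Q_0$ is what makes $\beta \in \OK$ — this is already built into \Cref{Lemma:pseudobasis}, but it is worth recalling since it is essential for the final form to have coefficients making $2y + \beta z$ integral. With these remarks the statement follows immediately.

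\begin{proof}
Since $2 \to Q_0$ by assumption, we apply \Cref{Lemma:pseudobasis} to the free lattice $(\OK^2, Q_0)$ with $\alpha = 2$. This yields a vector $\e \in \OK^2$ with $Q_0(\e) = 2$, a vector $\f \in K^2$, and an integral ideal $\ia$ with $\ia^2 \supset (2)$ such that $\OK^2 = \ia^{-1}\e + \ia\f$. Set $\beta = B_{Q_0}(\e,\f) \in \OK$ and $\gamma = Q_0(\f)$; by \Cref{Lemma:pseudobasis}\ref{Lemma:pseudobasis-a}, $\gamma \in \ia^{-2}$.

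Every vector of $\OK^2$ can be written as $y\e + z\f$ with $y \in \ia^{-1}$ and $z \in \ia$, and then
\[
Q_0(y\e + z\f) = y^2 Q_0(\e) + 2yz\, B_{Q_0}(\e,\f) + z^2 Q_0(\f) = 2y^2 + 2\beta y z + \gamma z^2,
\]
so every value represented by $Q_0$ has the claimed shape.

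By \Cref{Lemma:pseudobasis}\ref{Lemma:pseudobasis-b}, $2\gamma - \beta^2 \in \OKPlus$, which is part (1). In particular $\gamma \succ 0$: if some embedding $\sigma_i$ had $\sigma_i(\gamma) = 0$, then $\sigma_i(2\gamma - \beta^2) = -\sigma_i(\beta)^2 \leq 0$, contradicting $2\gamma - \beta^2 \succ 0$. Finally, \Cref{Lemma:pseudobasis}\ref{Lemma:pseudobasis-c} gives $2y + \beta z \in \ia$ for all $y \in \ia^{-1}$, $z \in \ia$, and $\ia \subset \OK$ because $\ia$ is integral; this is part (2).
\end{proof}
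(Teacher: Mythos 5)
Your proof is correct and is essentially the paper's own argument: the paper likewise obtains the lemma as a direct application of \Cref{Lemma:pseudobasis} with $\alpha=2$, expanding $Q_0(y\e+z\f)=2y^2+2\beta yz+\gamma z^2$. Your extra care in deducing $\gamma\succ0$ from $2\gamma-\beta^2\in\OKPlus$ is a welcome detail the paper leaves implicit.
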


\begin{proof}
The claim is a direct application of Lemma \ref{Lemma:pseudobasis} with the choice $\alpha=2$: every element of $\OK^2$ can be written as $y\e+z\f$ for some $y\in\ia^{-1}$ and $z\in\ia$, and we have $Q_0(y\e+z\f) = 2y^2+2\beta yz + \gamma z^2$.
\end{proof}

Of course, the most natural (and, if $2$ is unramified, the only) case is when $\ia=\OK$; then $\gamma \in \OKPlus$ and $Q_0$ is isometric to the quadratic form $2x_1^2+2\beta x_1x_2+\gamma x_2^2$. However, in general we do not know that the vector representing $2$ can be completed to give a basis. For that, it is not enough to assume $\sqrt2\notin K$ or even that $2$ is squarefree; there can still be an ideal such that its square divides $(2)$. An illustration of this can be found in \cite[Ex.~A.1]{KTZ}.

\begin{proposition} \label{Prop:nondiag}
Suppose \eqref{AssumForm} and that $Q \simeq \qf{1}\perp Q_0$ where $2 \to Q_0$. Then every element of $2\OKPlus$ is represented by $\qf{1,1,2,2}$. 
\end{proposition}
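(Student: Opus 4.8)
The plan is to reduce the whole statement to a single representability fact about the ``discriminant'' $\delta:=2\gamma-\beta^2$ of $Q_0$ and then to exploit the extra room provided by the ideal $\ia$. First I would feed the data of \Cref{Lemma:shapeOfNondiag} into the picture: write $Q\simeq\qf1\perp Q_0$ with $Q_0$ acting on $\ia^{-1}\e+\ia\f$, so that the values of $Q_0$ are $2y^2+2\beta yz+\gamma z^2$ with $y\in\ia^{-1}$, $z\in\ia$. The key algebraic identity, obtained by completing the square after doubling, is
\[
2\,Q_0(y\e+z\f)=(2y+\beta z)^2+\delta z^2,\qquad \delta=2\gamma-\beta^2\in\OKPlus,
\]
where $2y+\beta z\in\ia\subseteq\OK$ by \Cref{Lemma:shapeOfNondiag}, and of course $z\in\ia\subseteq\OK$; also $\delta\in\OKPlus$ by \Cref{Lemma:shapeOfNondiag}(1).

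Next, for an arbitrary $\mu\in\OKPlus$, universality of $Q$ gives a representation $\mu=x^2+Q_0(y\e+z\f)$ with $x\in\OK$, $y\in\ia^{-1}$, $z\in\ia$, hence
\[
2\mu=2x^2+(2y+\beta z)^2+\delta z^2 .
\]
Since $2\OKPlus=\{2\mu:\mu\in\OKPlus\}$, it therefore suffices to show that for every $z\in\ia$ the element $\delta z^2$ is a value of $\qf{1,2}$ over $\OK$: once $\delta z^2=p^2+2q^2$ with $p,q\in\OK$, we obtain $2\mu=(2y+\beta z)^2+p^2+2x^2+2q^2\to\qf{1,1,2,2}$, which is exactly what is claimed.

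To control $\delta z^2$, I would apply universality once more, this time to $\delta$ itself: write $\delta=x_0^2+Q_0(y_0\e+z_0\f)$ and double to get $\delta(2-z_0^2)=2x_0^2+(2y_0+\beta z_0)^2$. Since $z_0\in\OK$ and $\delta\succ0$, the right-hand side being totally nonnegative forces $2-z_0^2\succeq0$; as $2\neq\square$ (because $\sqrt2\notin K$) and the only totally positive decomposition of $2$ is $1+1$ by \Cref{Lemma:2and3}, this leaves $z_0=0$ or $z_0^2=1$. If $z_0^2=1$, then $\delta=(2y_0+\beta z_0)^2+2x_0^2$ with $2y_0+\beta z_0, x_0\in\OK$, so $\delta$ — and hence $\delta z^2$ for any $z$ — is of the form $p^2+2q^2$ over $\OK$. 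If $z_0=0$, then $\delta=x_0^2+2y_0^2$ with only $y_0\in\ia^{-1}$; here is the one delicate point, since $y_0$ need not be integral when $\ia\neq\OK$ (and such $\ia$ can occur even under $\sqrt2\notin K$, cf.\ \cite[Ex.~A.1]{KTZ}), so $\delta$ itself might not be visibly represented by $\qf{1,2}$ over $\OK$. The resolution is that we only ever need $\delta z^2$: since $y_0\in\ia^{-1}$ and $z\in\ia$ we have $y_0z\in\ia^{-1}\ia=\OK$, and likewise $x_0z\in\OK$, whence $\delta z^2=(x_0z)^2+2(y_0z)^2$ is a genuine value of $\qf{1,2}$ over $\OK$.

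I expect the main obstacle to be precisely this interaction with a nonprincipal $\ia$: one must resist trying to prove $\delta\to\qf{1,2}$ outright (which may fail) and instead carry along the third coordinate $z\in\ia$ of the representation of $2\mu$, as it is exactly this $z$ that ``clears the denominator'' of $y_0$. Everything else — the pseudobasis identity from \Cref{Lemma:pseudobasis}/\Cref{Lemma:shapeOfNondiag} and the short case analysis via \Cref{Lemma:2and3} — is routine and closely parallels both the diagonal case treated above and the methods of \cite{KTZ}.
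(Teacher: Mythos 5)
Your proposal is correct and follows essentially the same route as the paper's proof: completing the square in $2Q_0$, applying universality to $\delta=2\gamma-\beta^2$ to force $z_0\in\{0,\pm1\}$, and in the case $z_0=0$ using the factor $Z\in\ia$ from the representation of $2\mu$ to clear the denominator of $y_0\in\ia^{-1}$. You even single out the same delicate point (that one must not try to prove $\delta\to\qf{1,2}$ outright when $\ia\neq\OK$), so there is nothing to add.
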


\begin{proof}
By Lemma \ref{Lemma:shapeOfNondiag}, we know what $Q_0$ looks like. Denote $\delta = 2\gamma - \beta^2 \in \OKPlus$. Consider the representation $\delta \to Q$: there exist $x\in\OK$, and (by Lemma \ref{Lemma:shapeOfNondiag}) $y \in \ia^{-1}$ and $z \in \ia$ such that $\delta = x^2+2y^2+2\beta yz + \gamma z^2$. Multiplying by $2$ and completing the square, we get
\[
2\delta = 2x^2 + (2y+\beta z)^2 + \delta z^2,
\]
so
\[
\delta(2-z^2) = 2x^2+(2y+\beta z)^2.
\]
This means that $2-z^2 \succeq 0$, so $z^2\in\{0,1,2\}$; however, $2$ is impossible as it is not a square.

If $z=\pm1$, then first observe that $\ia=\OK$, since $z\in\ia$. This also yields $y\in\OK$. We get $\delta = 2x^2+(2y\pm\beta)^2 \to \qf{2,1}$.

In the other case, $z=0$, so we get $\delta = x^2+2y^2$; however, this does not necessarily mean $\delta \to \qf{1,2}$, as $y \in \ia^{-1}$.

Now consider any $\alpha \in \OKPlus$. We need to show $2\alpha \to \qf{1,1,2,2}$. Since we assume $Q$ to be universal, we have $\alpha \to Q$. Therefore, there exist $X \in \OK$, $Y\in\ia^{-1}$ and $Z\in\ia$ such that
\[
2\alpha = 2X^2 + (2Y+\beta Z)^2 + \delta Z^2.
\]
Since $2Y+\beta Z \in \OK$ by Lemma \ref{Lemma:shapeOfNondiag}, the first two terms are represented by $\qf{2,1}$. It remains to prove that $\delta Z^2 \to \qf{1,2}$. This is clear in the first case when $\delta$ itself is represented by $\qf{1,2}$. In the latter case,
\[
\delta Z^2 = (x^2+2y^2)Z^2 = (xZ)^2 + 2(yZ)^2 \to \qf{1,2};
\]
indeed, $yZ \in \ia^{-1}\ia = \OK$.
\end{proof}

\subsection{Indecomposables and consequences}

Using analogous arguments as in Subsections~\ref{Subsec:Diag} and~\ref{Subsec:Nondiag} with $2$ replaced by an indecomposable element, we can prove a variation on Theorem \ref{Th:KitaokaSquares}. Note that we do not assume $\sqrt2 \notin K$.

\begin{proposition}\label{prop:lambda}
Let $K$ be a totally real number field that admits a universal ternary quadratic form. Let $\lambda$ be a nonsquare indecomposable element. Then every element of $\lambda\OKPlus$ can be represented by $\qf{1,1,\lambda,\lambda}$.
\end{proposition}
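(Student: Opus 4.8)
The plan is to reprise, essentially verbatim, the proof of \Cref{Th:KitaokaSquares} (i.e.\ of \Cref{Prop:11alpha,Prop:1gammaalpha,Prop:nondiag}) with $2$ replaced throughout by $\lambda$. The two properties of $2$ that were used there --- that $2$ is a nonsquare and that its only decompositions are $2+0$ and $1+1$ --- are replaced by the hypotheses that $\lambda$ is a nonsquare and that $\lambda$ is indecomposable; the latter is in fact \emph{stronger}, which makes the case analysis lighter. Since the universal form $Q$ represents the unit $1$, \Cref{Lemma:UnitsSplit} gives $Q\simeq\qf{1}\perp Q_0$ for a free binary lattice $Q_0$ (its freeness follows from that of $Q$, exactly as in the setup of \Cref{Lemma:shapeOfNondiag}). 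As $\lambda$ is a nonsquare indecomposable, $\lambda\not\to\qf{1}$ (that would force $\lambda=\square$), so $\lambda\to Q_0$.

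If $Q$ is diagonal, one argues by hand as in \Cref{Prop:1gammaalpha}: write $Q\simeq\qf{1,\gamma,\alpha}$ where $\lambda\to\qf{\gamma}$ (indecomposability places $\lambda$ in a single unary summand, and nonsquareness excludes the summand $\qf{1}$), so $\lambda=\gamma t^2$ for some $t\in\OK\setminus\{0\}$. Plugging a representation $\gamma\alpha=x^2+\gamma y^2+\alpha z^2$ of $\gamma\alpha$ by $Q$ into $\alpha(\gamma-z^2)=x^2+\gamma y^2\succeq0$ gives $\gamma\succeq z^2$, hence $\lambda=\gamma t^2\succeq(tz)^2$; since $\lambda$ is indecomposable and nonsquare, this forces $tz=0$, so $z=0$ and $\gamma\alpha=x^2+\gamma y^2$, i.e.\ $\lambda\alpha=(tx)^2+\lambda y^2$ after multiplication by $t^2$. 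Then for any $\mu\in\OKPlus$, writing $\mu=X^2+\gamma Y^2+\alpha Z^2$ by universality and multiplying by $\lambda$, the identities $\lambda\gamma=(\gamma t)^2$ and $\lambda\alpha Z^2=(txZ)^2+\lambda(yZ)^2$ give $\lambda\mu\to\qf{\lambda,1,1,\lambda}$.

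The general case (including non-diagonal $Q$) is handled uniformly as in \Cref{Prop:nondiag}. Apply \Cref{Lemma:pseudobasis} to $Q_0$ with the represented element taken to be $\lambda$: one obtains an integral ideal $\ia$ with $\ia^2\supset(\lambda)$, an element $\beta\in\OK$, and a totally positive $\gamma\in\ia^{-2}$ such that every value of $Q_0$ equals $\lambda y^2+2\beta yz+\gamma z^2$ for some $y\in\ia^{-1}$, $z\in\ia$, with $\delta:=\lambda\gamma-\beta^2\in\OKPlus$ and $\lambda y+\beta z\in\ia\subset\OK$ for such $y,z$. Considering a representation $\delta=x^2+\lambda y^2+2\beta yz+\gamma z^2$ of $\delta$ by $Q$, multiplying by $\lambda$ and completing the square yields $\lambda\delta=\lambda x^2+(\lambda y+\beta z)^2+\delta z^2$, hence $\delta(\lambda-z^2)=\lambda x^2+(\lambda y+\beta z)^2\succeq0$, so $\lambda\succeq z^2$; since $z\in\ia\subset\OK$ and $\lambda$ is indecomposable and nonsquare, this forces $z=0$, so $\delta=x^2+\lambda y^2$ with $x\in\OK$, $y\in\ia^{-1}$. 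Finally, for arbitrary $\mu\in\OKPlus$ write $\mu=X^2+\lambda Y^2+2\beta YZ+\gamma Z^2$ by universality; multiplying by $\lambda$ and completing the square gives $\lambda\mu=\lambda X^2+(\lambda Y+\beta Z)^2+\delta Z^2$, where $\lambda Y+\beta Z\in\ia\subset\OK$ (so the middle term is a square in $\OK$) and $\delta Z^2=(xZ)^2+\lambda(yZ)^2$ with $xZ\in\OK$ and $yZ\in\ia^{-1}\ia=\OK$. Hence $\lambda\mu=\lambda X^2+(\lambda Y+\beta Z)^2+(xZ)^2+\lambda(yZ)^2$ is represented by $\qf{\lambda,1,1,\lambda}$, as claimed.

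I do not expect a real obstacle here: the case analysis is strictly easier than for $2$, because indecomposability of $\lambda$ means there is no nonzero square $\preceq\lambda$, so the branch $z^2=1$ that in \Cref{Prop:11alpha} requires Siegel's theorem on sums of squares simply does not occur (and, correspondingly, no hypothesis $\sqrt2\notin K$ is needed). The only genuinely technical ingredient is the ideal bookkeeping in the situation where the orthogonal complement $Q_0$ of $\qf{1}$ does not split off the vector representing $\lambda$ with an integral coordinate --- i.e.\ that $\ia$ is integral, $\lambda\ia^{-2}\subset\OK$, and each of $xZ$, $yZ$, $\lambda Y+\beta Z$ lies in $\OK$ --- but all of this is already packaged in \Cref{Lemma:pseudobasis}.
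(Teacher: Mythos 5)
Your proposal is correct and follows essentially the same route as the paper: split off $\qf{1}$, apply \Cref{Lemma:pseudobasis} with $\alpha=\lambda$, use indecomposability and nonsquareness of $\lambda$ to force $z=0$ in the representation of $\delta=\lambda\gamma-\beta^2$, and then assemble the representation of $\lambda\mu$ exactly as in \Cref{Prop:nondiag}. The only difference is that you treat the diagonal case separately first, which is redundant since your general argument (like the paper's) already covers every $Q\simeq\qf{1}\perp Q_0$ with $\lambda\to Q_0$.
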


\begin{proof}
Let $Q$ be a universal ternary quadratic form over $K$. Then $Q$ represents $1$ and $\lambda$; as in the proof of Lemma \ref{Lemma:nondiagShape}, we get $Q\simeq\qf{1}\perp Q_0$ with $\lambda \to Q_0$. 

Using Lemma \ref{Lemma:pseudobasis} with the choice $\alpha=\lambda$ (as in Lemma \ref{Lemma:shapeOfNondiag}), we get that there exists an ideal $\ia$ with $\ia^2\supset(\lambda)$ such that every value represented by $Q_0$ is equal to $\lambda y^2+2\beta yz+\gamma z^2$,  where $y\in\ia^{-1}$, $z\in\ia$, $\beta\in\OK$, $\gamma\in\ia^{-2}$; furthermore, $\lambda\gamma-\beta^2\in\OKPlus$ and $\lambda y+\beta z\in\ia$.

Then we proceed analogously as in the proof of Proposition \ref{Prop:nondiag}: considering the representation of $\delta=\lambda\gamma-\beta^2$, we get
\[
\delta(\lambda-z^2)=\lambda x^2+(\lambda y+\beta z)^2
\]
with $x\in\OK$, and $z, \lambda y+\beta z\in\ia$. Since $\lambda$ is indecomposable and not a square, the only possibility is $z=0$, i.e., $\delta=x^2+\lambda y^2$. Considering $\alpha\in\OKPlus$, we find some $X\in\OK$, $Y\in\ia^{-1}$, and $Z\in\ia$, such that 
\[\lambda\alpha=\lambda X^2+(\lambda Y +\beta Z)^2 +\delta Z^2.\]
As $\lambda X^2+(\lambda Y +\beta Z)^2\to\qf{\lambda,1}$, and $\delta Z^2=(xZ)^2+\lambda(yZ)^2\to\qf{1,\lambda}$, the claim follows.
\end{proof}

In particular, we recover the statement of Theorem \ref{Th:MainConditions}\ref{Th:MainConditions-freeL}: if $K$ admits a universal ternary quadratic form and there exists a nonsquare unit $\ve$, then $\qf{1,1,\ve,\ve}$ is universal.

Theorem \ref{Th:KitaokaSquares} is interesting on its own, and it also provides a simple computational tool for proving that a given field fails to admit a universal ternary quadratic form -- instead of checking all possible universal ternary quadratic forms or performing the escalation procedure, it is often enough to check representation of small elements of $2\OKPlus$ (e.g., $2$ times an indecomposable) by $\qf{1,1,2,2}$. However, it also has some interesting consequences -- the main one being Theorem \ref{Th:unramified}, the proof of which we postpone until Section \ref{Sec:Proof}. See also Section \ref{Sec:Deg4} for applications of Theorem \ref{Th:KitaokaSquares} to fields of degree four. For fields of higher degree, one gets the following by applying \cite[Thm.~3.2]{KY-EvenBetter}.

\begin{remark}
Suppose \eqref{AssumForm}. Then $K=\Q(\alpha_1,\ldots,\alpha_n)$ where $\alpha_i\in\OK$ and $\house{\alpha_i}<2+\sqrt6$ for all $i$. (Here, $\house{\alpha}=\max_j \abs{\sigma_j(\alpha)}$ is the \emph{house} of $\alpha$.) 
\end{remark}

For the other corollary of Theorem \ref{Th:KitaokaSquares}, we first need the following observation. 

\begin{proposition}\label{Prop:unramified}
Let $K$ be a totally real number field where all of $2\OKPlus$ is represented by $\qf{1,1,2,2}$. Then $2$ is ramified in $K$ unless $K=\Q$ or $\Q(\!\sqrt5)$.
\end{proposition}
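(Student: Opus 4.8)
The plan is to \emph{reduce} the hypothesis that all of $2\OKPlus$ is represented by $\qf{1,1,2,2}$ to the statement that the sum of four squares $\qf{1,1,1,1}$ is universal over $K$ — this reduction will use unramifiedness — and then to quote Siegel's classification of totally real fields over which a sum of squares is universal.

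So first I would assume, aiming at the dichotomy, that $2$ is unramified in $K$, and fix an arbitrary $\alpha\in\OKPlus$. By hypothesis there are $x,y,z,w\in\OK$ with $2\alpha=x^2+y^2+2z^2+2w^2$. Reducing modulo $2$ and using that $t\mapsto t^2$ is additive on $\OK/2\OK$ (because $(a+b)^2\equiv a^2+b^2\pmod 2$), we obtain $(x+y)^2\equiv 0\pmod 2$. Here is where unramifiedness enters: since $2$ is unramified, $\OK/2\OK$ is a finite product of finite fields, in particular reduced, so $(x+y)^2\equiv 0$ forces $x\equiv y\pmod 2$; write $x=y+2s$ with $s\in\OK$. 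Substituting into $2\alpha=x^2+y^2+2z^2+2w^2$ and dividing by $2$ gives, after completing the square, $\alpha=(y+s)^2+s^2+z^2+w^2$. Hence $\alpha\to\qf{1,1,1,1}$, and since $\alpha$ was an arbitrary totally positive integer, the quaternary form $\qf{1,1,1,1}$ is universal over $K$.

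To finish, I would invoke \cite{Si}: a sum of squares can be universal over a totally real number field $K$ only when $K=\Q$ or $K=\Q(\!\sqrt5)$ (over $\Q(\!\sqrt5)$ this is classical, cf.\ \cite{Ma}). Therefore either $2$ is ramified, or $K\in\{\Q,\Q(\!\sqrt5)\}$, which is exactly the exceptional case allowed in the statement. The whole argument is short; the only point that uses a hypothesis essentially — and hence the \enquote{main obstacle} to be careful about — is the passage modulo $2$, which relies precisely on $\OK/2\OK$ being reduced, i.e., on $2$ being unramified. When $2$ ramifies this step genuinely fails (consistently with the fact that, e.g., over $\Q(\!\sqrt2)$ all of $2\OKPlus$ lies in $\sum\square$ but need not be a sum of four squares), which is why the statement only promises a dichotomy rather than forcing $2$ to be unramified.
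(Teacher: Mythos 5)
Your argument is correct and is essentially the paper's own proof: reduce $2\alpha=x^2+y^2+2z^2+2w^2$ modulo $2$, use that $\OK/2\OK$ is reduced when $2$ is unramified to get $x\equiv y\pmod 2$, rewrite $\alpha$ as a sum of four integral squares, and conclude by Siegel's theorem. Your substitution $x=y+2s$ with $\alpha=(y+s)^2+s^2+z^2+w^2$ is literally the paper's $\alpha=\bigl(\frac{x+y}{2}\bigr)^2+\bigl(\frac{x-y}{2}\bigr)^2+z^2+w^2$ in different notation.
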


\begin{proof}
We show that if $2$ is unramified and all of $2\OKPlus$ is represented by $\qf{1,1,2,2}$, then $\qf{1,1,1,1}$ is universal. Invoking Siegel's theorem on the non-universality of the sum of squares \cite{Si}, this will be sufficient.

Let $\alpha\in\OKPlus$. Then
\[
2\alpha = x^2+y^2+2z^2+2w^2
\]
for some $x,y,z,w\in\OK$. Considering this equality modulo $2$, we get $0 \equiv (x+y)^2$; and since $2$ is unramified, this yields $x+y \equiv 0$. Thus we can write $\alpha$ as the sum of four integral squares as follows:
\[
\alpha = \Bigl(\frac{x+y}{2}\Bigr)^2 + \Bigl(\frac{x-y}{2}\Bigr)^2 + z^2 + w^2. \qedhere
\]
\end{proof}

Note that Proposition \ref{Prop:unramified} also implies that even in the cubic field $\Q(\zeta_{7}+\zeta_{7}^{-1})$ with discriminant $49$, one of the few known fields where $2\OKPlus \subset \sum\square$, not every element of $2\OKPlus$ is represented by $\qf{1,1,2,2}$.

\section{Fields of degree four} \label{Sec:Deg4}

In this section, we apply our results to fields of degree four.

\begin{proposition}\label{Prop:zeta20}
The field $\Q(\zeta_{20}+\zeta_{20}^{-1})$ does not admit a universal ternary lattice.
\end{proposition}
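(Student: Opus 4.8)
The plan is to suppose for contradiction that $K=\Q(\zeta_{20}+\zeta_{20}^{-1})$ admits a universal ternary lattice $L$, and then to violate \Cref{Th:MainConditions}\ref{Th:MainConditions-Norm}. Write $\eta=\zeta_{20}+\zeta_{20}^{-1}$; dividing the $20$th cyclotomic polynomial $\Phi_{20}(x)=x^8-x^6+x^4-x^2+1$ by $x^4$ and substituting $x+x^{-1}=\eta$ shows that $\eta$ is a root of $p(t)=t^4-5t^2+5$ (which is Eisenstein at $5$, hence irreducible). Thus $[K:\Q]=4$, so $2^{[K:\Q]}=16$, and since $p$ has degree $4$ we have $\norm{K/\Q}{\eta-c}=p(c)$ for every $c\in\Q$.

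First I would pin down $\abs{\UKPlus/\UKctv}$ exactly. From the existence of $L$, \Cref{Prop:TooManyUnits} gives $\abs{\UKPlus/\UKctv}\le2$. For the reverse inequality I would use the element $\eta+2$: all four roots of $p$ have absolute value $>1$ but lie in $(-2,2)$, so $\eta+2\succ0$, and $\norm{K/\Q}{\eta+2}=p(-2)=1$, whence $\eta+2$ is a totally positive unit; furthermore $\zeta_{20}=\zeta_{40}^2$ gives $\eta+2=(\zeta_{40}+\zeta_{40}^{-1})^2$, while $\zeta_{40}+\zeta_{40}^{-1}\notin K$ since $[\Q(\zeta_{40}+\zeta_{40}^{-1}):\Q]=8\ne4$, so $\eta+2$ is not a square in $K$. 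Hence $\eta+2\in\UKPlus\setminus\UKctv$, so $\abs{\UKPlus/\UKctv}\ge2$, and therefore $\abs{\UKPlus/\UKctv}=2$. (This value is in any case routine to confirm in a computer algebra system.)

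Now $K$ together with $L$ satisfies the hypotheses of \Cref{Th:MainConditions}, so by part \ref{Th:MainConditions-Norm} every $\alpha\in\OKPlus$ with $\norm{K/\Q}{\alpha}<16$ has $\norm{K/\Q}{\alpha}$ equal to a power of $2$. I would contradict this with the single element $\alpha=\eta^2-\eta=\eta(\eta-1)$: every root $\eta_i$ of $p$ satisfies $\abs{\eta_i}>1$ (so $\eta_i>1$ or $\eta_i<-1$), hence each conjugate $\eta_i(\eta_i-1)$ is positive and $\alpha\succ0$; and $\norm{K/\Q}{\alpha}=\norm{K/\Q}{\eta}\cdot\norm{K/\Q}{\eta-1}=p(0)\cdot p(1)=5\cdot1=5$. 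Since $5<16$ is not a power of $2$, this contradicts \Cref{Th:MainConditions}\ref{Th:MainConditions-Norm}, as desired.

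The one genuinely delicate point is verifying that $\eta+2$ is a nonsquare totally positive unit, which is exactly what forces $\abs{\UKPlus/\UKctv}=2$ and makes \Cref{Th:MainConditions} applicable; everything else reduces to evaluating $p(t)=t^4-5t^2+5$ at $-2,0,1$. An alternative route would be to note that $K$ is a cyclic quartic field whose unique quadratic subfield is $\Q(\!\sqrt5)$, so $\sqrt2\notin K$, and — using that $K$ has class number one, so $L$ is a quadratic form — to apply \Cref{Th:KitaokaSquares} and then exhibit an element of $2\OKPlus$ (for instance twice a suitable indecomposable) not represented by $\qf{1,1,2,2}$; but the route through \Cref{Th:MainConditions}\ref{Th:MainConditions-Norm} is shorter and works directly for lattices, free or not.
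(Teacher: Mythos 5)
Your proof is correct and takes essentially the same route as the paper: both exhibit the nonsquare totally positive unit $\vartheta+2$ (forcing $\abs{\UKPlus/\UKctv}=2$ via \Cref{Prop:TooManyUnits}) and the totally positive element $\vartheta(\vartheta-1)$ of norm $5<2^4$, then derive a contradiction from \Cref{Th:MainConditions}. The only difference is which necessary condition you violate --- the paper uses part \ref{Th:MainConditions-idecomposables} (neither $\vartheta(\vartheta-1)$ nor $\ve\vartheta(\vartheta-1)$ is a square, since $5$ is not a perfect square), while you use part \ref{Th:MainConditions-Norm}; the paper's remark immediately after the proposition notes explicitly that your variant works just as well.
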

\begin{proof}
This field is generated by $\vartheta = \zeta_{20}+\zeta_{20}^{-1}=\sqrt{\frac{5+\sqrt5}{2}}$ with minimal polynomial $x^4-5x^2+5$. It contains a nonsquare unit $\ve=\vartheta+2\in\UPlus_K$ and a totally positive element $p_5=\vartheta(\vartheta-1)$ of norm $5<2^4$, which is thus indecomposable. But neither $p_5$ nor $\ve p_5$ is a square (since their norm is not a square); therefore, we get from Theorem \ref{Th:MainConditions}\ref{Th:MainConditions-idecomposables} that over $\Q(\zeta_{20}+\zeta_{20}^{-1})$, there does not exist a universal ternary lattice.
\end{proof}

\begin{remark}
\setlist[enumerate]{wide=0pt, itemsep=3pt}
\begin{enumerate} 
    \item Note that the first two simple criteria which might have proved the nonexistence of a universal ternary lattice did not provide an answer: 1) $2\ve=\square$, so this necessary condition is satisfied; 2) numerical evidence suggests that every element of $2\OKPlus$ is indeed represented by $\qf{1,1,2,2}$. On the other hand, we also could have applied Theorem \ref{Th:MainConditions}\ref{Th:MainConditions-Norm}, as $p_5$ is a totally positive element whose norm is $<2^4$ but not a power of two. 
    \item By an announced result of Krásenský--Scharlau, the field $\Q(\zeta_{20}+\zeta_{20}^{-1})$ does admit a non-classical universal ternary quadratic form. This universal form can be written as $x^2 + y^2 + \ve z^2 + xy + \pi yz$, where $\ve=\vartheta+2$ as above (with $\vartheta=\zeta_{20}+\zeta_{20}^{-1}$) and $\pi=\vartheta^3+\vartheta^2-3\vartheta-2$ is the dyadic prime which satisfies $2\ve=\pi^2$.
\end{enumerate}
\end{remark}

\begin{corollary} \label{Cor:MainDegFour}
Let $K$ be a totally real number field of degree four. Assume that at least one of the following holds:
\begin{enumerate}
    \item $\sqrt2\notin K$ or \label{Cor:MainDegFour-notin}
    \item $\abs{\UKPlus/\UKctv}\geq2$. \label{Cor:MainDegFour-in} 
\end{enumerate}
Then there is no universal ternary quadratic form over $K$. 
\end{corollary}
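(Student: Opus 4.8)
The plan is to funnel both hypotheses into the single assumption $\sqrt 2\notin K$ and then invoke the classification of quartic fields with $2\OKPlus\subset\sum\square$. Suppose, for contradiction, that $K$ is a totally real quartic field --- satisfying the hypothesis $\sqrt 2\notin K$ or the hypothesis $\abs{\UKPlus/\UKctv}\geq 2$ --- over which a universal ternary quadratic form $Q$ exists. First I would dispose of the second hypothesis: viewing $Q$ as a free lattice, \Cref{Prop:TooManyUnits} forces $\abs{\UKPlus/\UKctv}=2$, so $K$ satisfies \eqref{AssumLat}, and then \Cref{Cor:noSqrtTwo} yields $\sqrt 2\notin K$. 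Thus in every case we may assume $\sqrt 2\notin K$, i.e., that $K$ satisfies \eqref{AssumForm}. (If instead $\abs{\UKPlus/\UKctv}\geq 4$, there is nothing to prove, since \Cref{Prop:TooManyUnits} already precludes any universal ternary lattice.)

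Next I would apply \Cref{Th:KitaokaSquares}: under \eqref{AssumForm}, every element of $2\OKPlus$ is represented by $\qf{1,1,2,2}$, and in particular $2\OKPlus\subset\sum\square$. By the classification of Kala--Yatsyna \cite[Thm.~1.1]{KY-EvenBetter}, there are exactly two totally real quartic fields with this last property. One of them is $\Q(\zeta_{20}+\zeta_{20}^{-1})$, which by \Cref{Prop:zeta20} admits no universal ternary lattice and hence no universal ternary quadratic form; the other is eliminated in the same spirit --- either because it contains $\sqrt 2$ (and so lies outside the reduced hypothesis), or, equivalently, because it carries a totally positive $\alpha$ with $\norm{K/\Q}{\alpha}<2^4$ not a power of $2$, contradicting \Cref{Th:MainConditions}\ref{Th:MainConditions-Norm}. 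Either way this contradicts the existence of $Q$, proving the corollary.

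I do not expect a genuine obstacle here: the substantive content already resides in \Cref{Th:KitaokaSquares}, in \Cref{Cor:noSqrtTwo} (where the signature argument of \Cref{Subsec:Signitures} is spent), and in the cited Kala--Yatsyna classification, all of which may be assumed. The single point deserving care is the reduction of the unit hypothesis to $\sqrt 2\notin K$: it uses that $\abs{\UKPlus/\UKctv}$ equals $2$ \emph{on the nose} --- so that the deep \Cref{Cor:noSqrtTwo} is applicable --- which is precisely what \Cref{Prop:TooManyUnits} guarantees once a universal ternary form is assumed to exist.
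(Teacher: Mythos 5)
Your proposal is correct and follows the same skeleton as the paper's proof: reduce to $2\OKPlus\subset\sum\square$, invoke \cite[Thm.~1.1]{KY-EvenBetter} to get the two quartic candidates, and eliminate $\Q(\zeta_{20}+\zeta_{20}^{-1})$ via \Cref{Prop:zeta20}. The one genuine divergence is how the two branches are organized and how $\Q(\!\sqrt2,\sqrt5)$ is killed. The paper keeps the two hypotheses separate: under \ref{Cor:MainDegFour-in} it goes \Cref{Prop:TooManyUnits} $\Rightarrow$ $\abs{\UKPlus/\UKctv}=2$ $\Rightarrow$ \Cref{Th:MainConditions}\ref{Th:MainConditions-SumOfSquares} directly, and then must cite \cite[Thm.~1.1]{KTZ} (no biquadratic field admits a universal ternary classical form) to dispose of $\Q(\!\sqrt2,\sqrt5)$. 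You instead first funnel hypothesis \ref{Cor:MainDegFour-in} into $\sqrt2\notin K$ via \Cref{Cor:noSqrtTwo} (equivalently \Cref{Th:MainConditions}\ref{Th:MainConditions-NoSqrt2}), after which $\Q(\!\sqrt2,\sqrt5)$ is excluded for free since it contains $\sqrt2$; this is valid and has the small advantage of not needing the external biquadratic result, at the cost of routing everything through the signature machinery behind \Cref{Cor:noSqrtTwo} even in the purely unit-theoretic branch. Your secondary ``or, equivalently'' justification via \Cref{Th:MainConditions}\ref{Th:MainConditions-Norm} is the only loose point: that theorem presupposes $\abs{\UKPlus/\UKctv}=2$, so it is not available under hypothesis \ref{Cor:MainDegFour-notin} alone; but since your primary argument ($\sqrt2\in\Q(\!\sqrt2,\sqrt5)$ contradicts the reduced hypothesis) already suffices in both cases, this does not affect correctness.
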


\begin{proof}
Assume that $K$ admits a universal ternary form. If $\sqrt2\notin K$, then every element of $2\OKPlus$ is represented by $\qf{1,1,2,2}$  by Theorem \ref{Th:KitaokaSquares}; therefore, $2\OKPlus\subset\sum\square$. In the case when $\abs{\UKPlus/\UKctv}\geq2$, we can restrict to $\abs{\UKPlus/\UKctv}=2$ by Proposition \ref{Prop:TooManyUnits}. Hence, we can apply Theorem \ref{Th:MainConditions}\ref{Th:MainConditions-SumOfSquares}, to get $2\OKPlus\subset\sum\square$ again. 

Invoking \cite[Thm.~1.1]{KY-EvenBetter}, the only options are $K=\Q(\!\sqrt2,\sqrt5)$ and $K=\Q(\zeta_{20}+\zeta_{20}^{-1})$. However, the former field is ruled out by \cite[Thm.~1.1]{KTZ} and the latter by Proposition \ref{Prop:zeta20}.
\end{proof}


\section{Proofs of main theorems}\label{Sec:Proof}

Combining Theorem \ref{Th:KitaokaSquares} and Proposition \ref{Prop:unramified}, we can prove Theorem \ref{Th:unramified}. 

\begin{proof}[Proof of Theorem $\ref{Th:unramified}$]
If $2$ is unramified, then $\sqrt2\notin K$, so Theorem \ref{Th:KitaokaSquares} applies and $\qf{1,1,2,2}$ represents all of $2\OKPlus$. Then, by Proposition \ref{Prop:unramified}, we have $K=\Q$ or $K=\Q(\!\sqrt5)$. It is well known that $\Q(\!\sqrt5)$ admits a universal ternary quadratic form, while $\Q$ does not.
\end{proof}

Let us also summarize how Theorem \ref{Th:MainKitSqs} follows from what we have established.

\begin{proof}[Proof of Theorem $\ref{Th:MainKitSqs}$]
Let $K$ be a totally real number field, and assume that there exists a universal ternary classical quadratic form over $K$.

Suppose that \ref{Th:MainKitSqs-sqrt2} holds, i.e., $\sqrt2\notin K$. By Theorem \ref{Th:KitaokaSquares}, every element of $2\OKPlus$ is represented by $\qf{1,1,2,2}$. Since $2x^2+2y^2=(x+y)^2+(x-y)^2$, we have $\qf{2,2}\to\qf{1,1}$, and hence also $\qf{1,1,2,2}\to\qf{1,1,1,1}$. In particular, every element of $2\OKPlus$ can be written as the sum of four squares.

Assume \ref{Th:MainKitSqs-units}, i.e., that $\abs{\UKPlus/\UKctv}\geq 2$. Proposition \ref{Prop:TooManyUnits} leaves only the case $\abs{\UKPlus/\UKctv}= 2$. Then we can apply Theorem \ref{Th:MainConditions}\ref{Th:MainConditions-NoSqrt2} to get that $\sqrt2\notin K$. Therefore, condition \ref{Th:MainKitSqs-sqrt2} holds, and the claim follows from the previous part of the proof.

Finally, by Corollary \ref{Cor:MainDegFour}, no such field $K$ can exist in degree four. 
\end{proof}

\printbibliography

\end{document}